\definecolor{Green}{HTML}{068a28}
\tikzset{vtx/.style={circle, fill, inner sep=1.5pt}}
\tikzset{pdshift/.style={yscale=-1,shift={(-0.5,-0.5)}}}
\newcommand{\makedecoratedstyle}[2]{
	\tikzset{#1/.style={postaction={decorate}, 
        decoration={markings, mark=at position 0.5 with {#2}}}}
}
\NewDocumentCommand{\bumps}{ O{} O{#1} m }{
	\foreach \i/\j in {#3} {
  		\draw[pdcap,#1] (\j-0.5,-\i+0.5);
        \draw[othercap,#2] (\j-0.5,-\i+0.5);
	}
}
\NewDocumentCommand{\crosses}{ O{} O{#1} m }{
	\foreach \i/\j in {#3} {
  		\draw[flatline,#1] (\j-0.5,-\i+0.5);
        \draw[upline,#2] (\j-0.5,-\i+0.5);
	}
}
\newcommand{\pdcaps}[2][]{
	\foreach \i/\j in {#2} {
  		\draw[pdcap,#1] (\j-0.5,-\i+0.5);
	}
}
\newcommand{\shadedsquares}[2][]{
	\foreach \i/\j in {#2} {
  		\draw[shadedsquare,#1] (\j-0.5,-\i+0.5);
	}
}
\title{Inversions Tableaux}
\author{Ilani Axelrod-Freed}
\date{July 2024}
\newtheorem{theorem}{Theorem}[section]
\newtheorem{lemma}[theorem]{Lemma}
\newtheorem{corollary}[theorem]{Corollary}
\newtheorem{proposition}[theorem]{Proposition}
\theoremstyle{definition}
\newtheorem{definition}[theorem]{Definition} 
\newtheorem{example}[theorem]{Example}
\newtheorem{question}[theorem]{Question}
\newtheorem{remark}[theorem]{Remark}
\newtheorem{observation}[theorem]{Observation}
\begin{document}
\newcommand{\Inv}{\mathrm{Inv}}
\newcommand{\IT}{\mathcal{IT}}
\newcommand{\RP}{\mathcal{RP}}
\newcommand{\ID}[1]{\mathcal{ID}_{#1}}

\maketitle

\begin{abstract}
We introduce inversions tableaux, a new combinatorial model for Schubert polynomials and Stanley symmetric functions that directly specializes to semi-standard Young tableaux in the Grassmannian case. 
They are a modification of the balanced staircase tableaux of Edelman and Greene. We explicitly describe inversions tableaux that correspond to the lexicographically minimal and maximal monomials in each Schubert polynomial and characterize the unique inversions tableau for dominant permutations. 
We also characterize the action of generalized chute moves on inversions tableaux, and establish related background that will be used to prove Rubey's chute moves conjecture in upcoming work. 



\end{abstract}

\section{Introduction}

Schubert polynomials play in important role in algebraic geometry and algebraic combinatorics. They were introduced by Lascoux and Schützenberger in 1982 \cite{LascouxS}. Bernstein--Gelfand--Gelfand \cite{bernstein1973schubert} and Demazure \cite{demazure1974} showed that Schubert polynomials represent cohomology classes of the flag manifold $GL(n)/B$, where $B$ is the Borel subgroup of upper triangular matrices. In particular, Schubert polynomials in $n$ variables are indexed by permutations in the symmetric group $S_n$ and form a basis for the coinvariant algebra $\mathbb{C}[x_1, \dots, x_n]/I$, where $I$ is the ideal generated by elementary symmetric polynomials. Schubert polynomials are also frequently modeled combinatorially in terms of compatible sequences \cite{billey1993some, FOMIN1994196}, pipe dreams (also called RC-graphs) \cite{bergeron1993rc, fomin1996yang}, bumpless pipe dreams \cite{lam2021back} and balanced labellings \cite{balancedlabellings}. In \cite{billey1993some} Bergeron and Billey define an operation called chute moves on pipe dreams and use it to prove the equivalence of the combinatorial and algebraic definitions of Schubert polynomials. Building off this, Rubey definited a poset on pipe dreams with covering relations given by generalized chute moves, and conjectured this poset is a lattice \cite{RUBEY}.

Given two Schubert polynomials $\mathfrak{S}_u$ and $\mathfrak{S}_v$, we can express their product as a linear combination of other Schubert polynomials
    $$\mathfrak{S}_u \mathfrak{S}_v = \sum\limits_{w \in S_n} c^w_{u,v} \mathfrak{S}_w.$$
These $c^w_{u,v}$ coefficients are the Schubert structure coefficients, also called the Schubert Littlewood--Richardson coefficients. One of the main open questions in Schubert calculus is to find a combinatorial interpretation for these coefficients.

For $w$ in a special class of permutations called Grassmannian permutations, the Schubert polynomial $\mathfrak{S}_w$ becomes a Schur polynomial. 
Schur polynomials have been studied extensively (see \cite{EC2}  for a good introduction) and play an essential role in the theory of symmetric functions, representations of the symmetric group, geometry of the Grassmannian, and more. Many known properties of Schubert polynomials come from generalizing properties known first for Schur polynomials. Schur polynomials have a combinatorial interpretation in terms of semistandard Young tableaux (SSYT). There is a well understood Littlewood--Richardson rule which gives the Schur structure coefficients in terms of the numbers of a certain subset of skew SSYT \cite{littlewood1934group, stembridge2002concise}.

In this paper we give a combinatorial interpretation for Schubert polynomials using certain tableaux. In \cite{EG}, Edelman and Greene give a bijection between chains in weak Bruhat order and balanced staircase tableaux. In this paper, we consider staircase shape tableaux with numbers in only a specific subset of boxes. We define such a tableaux to be an \emph{inversions tableau} if these numbers satisfy a new generalization we provide of the balanced condition from \cite{EG}. Inversions tableaux generalize the SSYT formula from the Schur case to the Schubert case and allow us to see several nice properties of Schubert polynomials. Inversions tableaux are defined in a similar way to the balanced labellings of Fomin--Greene--Reiner--Shimozono \cite{balancedlabellings}. However they additionally allow us to directly compare permutations in weak Bruhat order, and to use more local properties to check when a tableau is balanced. Through our main bijection with pipe dreams in Section \ref{section:inversions tableaux}, we also obtain a bijection for pipe dreams and balanced labellings. Since the original release of this work, Sara Billey informed us that an equivalent construction called flag inversion fillings were studied in Elizabeth Kelly's (unpublished) PhD thesis, which has since been made available online \cite{Kelly.thesis}.

From inversions tableaux, we define a new class of tableaux of the same shape which they are in bijection with called Lehmer tableaux. We can directly compare two pipe dreams in the generalized chute moves poset by comparing their Lehmer tableaux entry-wise. In this paper, we characterize how chute moves act on inversions and Lehmer tableaux, and show that if two pipe dreams are related via generalized chute moves, then their Lehmer tableaux are comparable entry-wise. In subsequent work with Defant, Mularczyk, Nguyen, and Tung, \cite{proofofrubey} we show the converse of the statement holds, and use these properties of Lehmer tableaux to prove Rubey's lattice conjecture from \cite{RUBEY}.


In Section \ref{section:background} we provide background on Schur and Schubert polynomials and their standard combinatorial interpretations.
In Section \ref{section:inversions tableaux} we define inversions tableaux and show they give Schubert polynomials by proving a bijection with pipe dreams. We also extend the construction of inversions tableaux to a combinatorial model for Stanley symmetric functions.
In Section \ref{section:lehmer code} we show how inversions tableaux give a particularly nice way to read the Lehmer code of a permutation, and from there give the formula and corresponding inversions tableaux for the maximal and minimal monomials of each Schubert polynomial. In doing so we also fully characterize the inversions tableaux for dominant permutations. 
In Section \ref{section:Grassmannian} we show that inversions tableaux for Grassmannian permutations are exactly reverse SSYT. Further for inverse Grassmannian permutations we show their inversions tableaux can be read easily as certain flag SSYT.
Finally in Section~\ref{section:chute moves}, we show how generalized chute moves act on inversions tableaux, relate this to a new ordering we define on permutations based on their Lehmer codes, and introduce Lehmer tableaux as a way of comparing two pipe dreams in Rubey's chute move poset.

\section{Background}
\label{section:background}
We will begin by setting the conventions for the symmetric group and weak Bruhat order that we will use in this paper, as well as defining Schur and Schubert polynomials and the combinatorial objects associated with them.

Let $S_n$ be the symmetric group on $n$ elements. Given a permutation $w\in S_n$ written in one line notation, the \textit{adjacent transposition} $s_i$ acts on the left and swaps the positions of numbers $i$ and $i+1$. Note that $\{s_i\}_{i=1}^{n-1}$ generates $S_n$ as a group. We say that $(i,j)$ is an \emph{inversion} if $i<j$ but $w_j < w_i$. Let $\Inv(w)$ be the set of inversions of $w$. A valid inversion set uniquely determines a permutation. The \emph{length} of $w$ is $l(w) = \# \Inv(w)$. A \emph{reduced word} for $w$ is a sequence of indices $a_1, a_2, \dots a_{l(w)}$ such that $w = s_{a_{l(w)}} \cdots s_{a_2} s_{a_1}$.

This is also the number of generators in a reduced word for $w$. We write $\text{Id}$ for the identity and let $w_0 = n (n-1) \cdots 2 1$ denote the unique permutation of maximal length in $S_n$.

\begin{definition}
    The (left) weak Bruhat order on $S_n$ is a partial ordering of the elements given by the following covering relation: For $u, w \in S_n$, we have $u \lessdot w$ if $w = s_i u$ for some $i$, and $l(w')=l(w)+1$.
\end{definition}
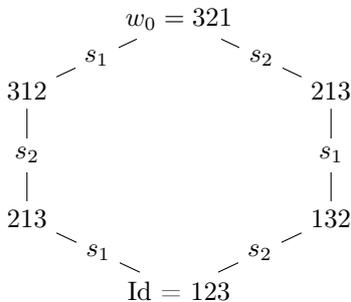
\begin{figure}[h]
    \centering
    \begin{tikzpicture}
   \node[fill=white] (a) at (0,-0.05) {\text{Id} = 123};
    \node[fill=white] (b) at (2,.9)  {132};
    \node[fill=white] (c) at (2,2.6) {213};
    \node[fill=white] (d) at (0,3.55) {$w_0=321$};
    \node[fill=white] (e) at (-2,2.6) {312};
    \node[fill=white] (f) at (-2,.9) {213};
\draw (a) -- (f) node [midway, fill=white] {$s_1$};
\draw (a) -- (b) node [midway, fill=white] {$s_2$};
\draw (f) -- (e) node [midway, fill=white] {$s_2$};
\draw (b) -- (c) node [midway, fill=white] {$s_1$};
\draw (c) -- (d) node [midway, fill=white] {$s_2$};
\draw (e) -- (d) node [midway, fill=white] {$s_1$};
    \end{tikzpicture}
    \caption{The (left) weak Bruhat order on $S_3$.}
    \label{fig:weak-bruhat}
\end{figure}

For permutations $u,w \in S_n$, we have that $u<w$ in weak Bruhat order exactly when $\Inv(u) \subset \Inv(w)$.

Next we define Schubert polynomials via divided difference operators. The $i^{th}$ \emph{divided difference operator} $\partial_i:\mathbb{Z}[x_1,x_2,\cdots]\to \mathbb{Z}[x_1,x_2,\cdots]$ is given by
\[\partial_i(f) = {f-s_i(f)\over x_i - x_{i+1}}\]
where $s_i(f)$ is the polynomial obtained from $f$ by swapping $x_i\leftrightarrow x_{i+1}$. Schubert polynomials are indexed by permutations and are uniquely determined by the following:
\begin{align*}
\mathfrak{S}_{w_0} &= x_1^{n-1}x_2^{n-2}\cdots x_{n-1};\\	
\partial_i\mathfrak{S}_{w} & =  \begin{cases}
	\mathfrak{S}_{w s_i}&\text{if } l(w s_i) < l(w)\\
	0&\text{otherwise.}
\end{cases}\end{align*}

Schubert polynomials can also be defined combinatorially using pipe dreams.

\begin{definition}
   For $w\in S_n$, index squares in an $n \times n$ grid using matrix coordinates. Tile all squares in $\{(i,j)|i+j = n +1 \}$ with
   \begin{tikzpicture}
	\draw (-0.2,0) -- (-0.1,0) to [bend right=45] (0,0.1) -- (0,0.2);
\end{tikzpicture}
   , and tile all squares in $\{(i,j)|i+j < n +1 \}$ with either a 
\begin{tikzpicture}
    \draw (0,-0.2) -- (0,-0.1) to[bend left=45] (0.1,0) -- (0.2,0);
	\draw (-0.2,0) -- (-0.1,0) to [bend right=45] (0,0.1) -- (0,0.2);
\end{tikzpicture}
or
\begin{tikzpicture}
    \draw (0,-0.2) -- (0,0.2);
	\draw (-0.2,0) -- (0.2,0);
\end{tikzpicture}
, so that the resulting pipe starting at spot $i$ on the left (the $i^{th}$ pipe) ends at spot $w_i$ on the top for each $1 \leq i \leq n$. This is a \emph{pipe dream} for $w$. 
The \emph{weight} of the pipe dream $P$ is $wt(P)=(d_1,d_2, \dots , d_n)$ where $d_i$ is the number of crossing tiles in row $i$.  
The pipe dream is \emph{reduced} if no two pipes cross each other more than once. Let $\mathcal{RP}(w)$ denote the set of reduced pipe dreams for $w$. 
\end{definition}

Bergeron and Billey proved that each Schubert polynomial can be expressed as a generating function over pipe dreams.

\begin{theorem}[\cite{billey1993some}]
 For a permutation $w \in S_n$, the \emph{Schubert polynomial} for $w$ is
$$\mathfrak{S}_w = \sum_{P \in \RP(w) \ }  x^{wt(P)}$$ 
where for $wt(P) = (d_1, \dots , d_n)$, we define $x^{wt(P)} = x_1^{d_1} x_2^{d_2} \cdots x_n^{d_n}$.
\end{theorem}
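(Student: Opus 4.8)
The plan is to write $F_w := \sum_{P \in \RP(w)} x^{wt(P)}$ and prove $F_w = \mathfrak{S}_w$ by reverse induction on $\ell(w)$, showing that the family $\{F_w\}$ satisfies the two properties characterizing Schubert polynomials. Every $w \neq w_0$ has a position $i$ with $w(i) < w(i+1)$, and then $\ell(w s_i) = \ell(w)+1$; iterating such steps reaches $w_0$, so it suffices to check (i) $F_{w_0} = \mathfrak{S}_{w_0}$, and (ii) $\partial_i F_v = F_{v s_i}$ whenever $v(i) > v(i+1)$. Granting these, the inductive step is immediate: for $w \neq w_0$ take $i$ with $w(i) < w(i+1)$ and set $v = w s_i$, so that $v(i) > v(i+1)$ and $\ell(v) = \ell(w)+1$; then $\mathfrak{S}_w = \partial_i \mathfrak{S}_v = \partial_i F_v = F_{v s_i} = F_w$, using the defining recursion, the inductive hypothesis ($\mathfrak{S}_v = F_v$ since $\ell(v) > \ell(w)$), and (ii) in turn.

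For (i), first observe that in a reduced pipe dream any two pipes cross at most once, while pipes $a < b$ are forced to cross an odd number of times exactly when $w(a) > w(b)$ (view the pipes as arcs in the staircase region with endpoints interleaving on its left and top edges); hence every reduced pipe dream for $w$ has exactly $\ell(w)$ crossing tiles. For $w_0$ this count is $\binom{n}{2}$, which is precisely the number of tiles with $i+j \le n$, namely those strictly above the forced anti-diagonal of elbows. So the only candidate is the pipe dream with a crossing in each such tile, and one verifies it is legal and reduced: the $i$-th pipe runs east along row $i$, turns north at the elbow $(i, n+1-i)$, and continues straight to the top of column $n+1-i$, hence realizes $w_0$ and meets each other pipe exactly once. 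Its weight is $(n-1, n-2, \dots, 1, 0)$, so $F_{w_0} = x_1^{n-1}x_2^{n-2}\cdots x_{n-1} = \mathfrak{S}_{w_0}$.

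The real content is (ii). Since $\partial_i$ involves only $x_i$ and $x_{i+1}$, which record the numbers of crossings in rows $i$ and $i+1$, one analyzes reduced pipe dreams of $v$ two rows at a time. Fixing the crossing counts $d_k$ in all rows $k \neq i, i+1$ also pins down $d_i + d_{i+1}$ (it equals $\ell(v)$ minus the rest), and within each such class the fillings of rows $i$ and $i+1$ are connected by local moves that slide a single crossing between those two rows; the resulting $\{x_i, x_{i+1}\}$-generating function of the class splits as a symmetric complete-homogeneous piece, killed by $\partial_i$, plus a one-sided remainder on which $\partial_i$ returns exactly the matching class of reduced pipe dreams for $v s_i$. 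This row-pair bookkeeping --- reconciling the alternating signs of $\partial_i$ with the \emph{ladder move} structure of pipe dreams --- is the step I expect to be the main obstacle; it is essentially the ladder-move analysis of Bergeron--Billey \cite{bergeron1993rc}, and can alternatively be packaged through the \emph{mitosis} operator of Knutson--Miller or extracted from the nilCoxeter-algebra identities of Fomin--Stanley.
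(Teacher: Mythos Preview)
The paper does not prove this theorem: it is quoted as background from \cite{billey1993some} and used without argument. So there is no ``paper's own proof'' to compare your proposal against.

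As for your proposal itself, the global strategy---reverse induction on length, checking the base case at $w_0$ and then the recursion $\partial_i F_v = F_{vs_i}$---is the standard and correct one, and your treatment of (i) is clean. Step (ii), however, is only outlined, and the outline as written is a bit loose: fixing merely the \emph{counts} $d_k$ for $k\neq i,i+1$ does not determine the pipe dream outside rows $i$ and $i+1$, so the ``class'' you describe is not yet well-defined; one really has to fix the entire configuration outside those two rows (equivalently, the pipes entering and leaving the two-row strip) and then analyze the possible fillings of the strip. Even then the claimed decomposition into an $h$-symmetric part plus a one-sided remainder requires a careful case analysis of how elbows and crosses interact along the strip, which is exactly the ladder/chute bookkeeping you cite. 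Since you explicitly flag this as the main obstacle and point to the right sources (Bergeron--Billey, Knutson--Miller mitosis, or Fomin--Stanley), the proposal is an honest sketch rather than a complete proof; just be aware that the informal description of the mechanism would need tightening before it could stand on its own.
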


\begin{example}
\label{example:pipe dreams}
$$
\begin{matrix}
\mathfrak{S}_{431562} 
   =& x_1^3 x_2^2 x_4 x_5 &+& x_1^3 x_2^2 x_3 x_5 &+& x_1^3 x_2^2 x_3 x_4 \\
  & & & & & \\
   & \scalebox{.5}{ 
  \begin{tikzpicture}
\crosses{1/1,1/2,1/3,2/1,2/2,4/1,5/1}
\bumps{1/4,1/5,2/3,2/4,3/1,3/3,3/2,4/2}
\pdcaps{1/6,2/5,3/4,4/3,5/2,6/1}

\foreach \i in {1,...,6} {
	\node[left] at (0,-\i+0.5) {\Large \i};
    \node[above] at (\i-0.5,0) {\Large \i};
}
\end{tikzpicture}
}
  &&\scalebox{0.5}{ 
  \begin{tikzpicture}
\crosses{1/1,1/2,1/3,2/1,2/2,3/2,5/1}
\bumps{1/4,1/5,2/3,2/4,3/1,3/3,4/1,4/2}
\pdcaps{1/6,2/5,3/4,4/3,5/2,6/1}

\foreach \i in {1,...,6} {
	\node[left] at (0,-\i+0.5) {\Large \i};
    \node[above] at (\i-0.5,0) {\Large \i};
}
\end{tikzpicture}
}
 && \scalebox{0.5}{ 
  \begin{tikzpicture}
\crosses{1/1,1/2,1/3,2/1,2/2,3/2,4/2}
\bumps{1/4,1/5,2/3,2/4,3/1,3/3,4/1,5/1}
\pdcaps{1/6,2/5,3/4,4/3,5/2,6/1}

\foreach \i in {1,...,6} {
	\node[left] at (0,-\i+0.5) {\Large \i};
    \node[above] at (\i-0.5,0) {\Large \i};
}
\end{tikzpicture}
}
\end{matrix}
$$
\end{example}

Schubert polynomials are a generalization of Schur polynomials, which are indexed by Young diagrams. Like permutations, Young diagrams also come with a partial ordering: given Young diagrams $\lambda$ and $\mu$, we say $\lambda < \mu$ if $\lambda \subset \mu$. This produces a poset called \emph{Young's lattice}. Chains starting at $\emptyset$ in Young's Lattice correspond to standard Young tableaux (SYT), Young diagrams with a unique number in each box that have values increasing along rows and columns. If the chain starts at a tableaux $\mu \neq \emptyset$ and goes to $\lambda$, then we get a SYT of \emph{skew shape} $\lambda/\mu$.

\begin{example}
A SYT of shape $\lambda=(4,3,3)$, and another of skew shape $\lambda/\mu =(4,4,3)/(3,1)$:
    \begin{eqnarray*}
\ytableausetup{baseline}
\begin{ytableau}
*(white) 1 & *(white)  3 & *(white) 4 & *(white) 10\\
*(white)  2 & *(white) 5 & *(white) 8 \\
*(white) 6 & *(white) 7 & *(white) 9 \\
\end{ytableau}
& \ \ \ \ &
\begin{ytableau}
\none & \none & \none & *(white) 6\\
\none & *(white) 2 & *(white) 4 & *(white) 7 \\
*(white) 1 & *(white) 3 & *(white) 5 \\
\end{ytableau}
\end{eqnarray*}
\end{example}

If instead of putting an $i$ in the box we add at the $i^{th}$ step of our chain, we add numbers in weakly increasing fashion, following the right restrictions, we can now obtain semi-standard Young tableaux.

\begin{definition}
A semi-standard Young tableau (SSYT) of shape $\lambda$ is a Young diagram of shape $\lambda$ with numbers in each box which are weakly increasing along rows and strictly increasing down columns. The \emph{weight} of a SSYT $T$ is $wt(T) := (m_1,m_2,m_3,\ldots)$, where $m_i$ is the number of boxes in $T$ filed with the number $i$. The \emph{Schur polynomial} indexed by $\lambda$ is defined in terms of SSYT as
$$s_{\lambda}(x_1, \dots ,x_n) := \sum_{T \in SSYT} x^{wt(T)},$$
where the sum is over SSYT whose boxes have entries in $\{1, \dots, n\}$.
\end{definition}

We can also take the same definition allowing for skew shapes $\lambda/\mu$, in which case we get the \emph{skew Schur polyomial} $s_{\lambda/\mu}(x_1, \dots, x_n)$.
For a box at location $(i,j)$ in the tableau $T$ (using matrix coordinates), we denote its entry as $T(i,j)$.

\begin{example}
An SSYT of shape $\lambda = (5,3,2)$ and weight $(1, 4, 3, 2)$ which contributes the monomial $x_1 x_2^4 x_3^3 x_4 ^2$ to the sum for $s_{\lambda}(x_1, x_2, x_3, x_4)$: 
\begin{eqnarray*}
\ytableausetup{baseline}
\begin{ytableau}
*(white) 1 & *(white)  2 & *(white) 2 & *(white) 2 & *(white)  4\\
*(white)  2 & *(white) 3 & *(white) 3 \\
*(white) 3 & *(white) 4 \\
\end{ytableau}
\end{eqnarray*}
\end{example}


\section{Inversions Tableaux}
\label{section:inversions tableaux}

In this section, we define inversions tableaux and provide a bijection with pipe dreams. First we will define inversions diagrams which give us a way to frame weak Bruhat order via containment of diagrams, similar to Young's lattice. We will show how to represent maximal chains in weak Bruhat order as balanced staircase tableaux, and how this may be softened to what we call a ``weakly balanced'' condition akin to the loosening from SYT to SSYT. Finally, we define inversions tableaux as weakly balanced tableaux satisfying certain additional properties and provide a bijection with pipe dreams.

\begin{definition}
    Given a permutation $w \in S_n$, the \emph{inversions diagram} for $w$ (or of shape $w$), $T_w$, is the $(n-1)\times(n-1)$ staircase tableaux where we shade in the box indexed $(i,j)$ exactly when $(i,j) \in \Inv(w)$.
\end{definition}
Throughout the paper, we will orient the staircase tableaux as in the example below, with row labels $1, \dots, n-1$ increasing bottom to top and columns labels $2, \dots , n$ increasing left to right, and $(i,j)$ denoting the entry in row $i$ and column $j$. The staircase then contains all boxes indexed by $\{(i,j) | 1 \leq i < j \leq n\}$.

\begin{example}
\label{fig:inversions diagram}
The inversions diagram for $w = 4\textcolor{green}{\underline{\textcolor{black}{5}}}162\textcolor{green}{\underline{\textcolor{black}{3}}}$ is 
\begin{eqnarray*}
\ytableausetup{baseline}
\begin{ytableau}
  \none & \none & \none & \none & *(white) & \none[5] \\
  \none & \none & \none & *(green) & *(green) & \none[4] \\
  \none & \none & *(white) & *(white) & *(white) & \none[3] \\
  \none & *(green) & *(white) & *(green) & *(green) & \none[2] \\
  *(white) & *(green) & *(white) & *(green) & *(green) & \none[1] \\
  \none[2] & \none[3] & \none[4] & \none[5] & \none[6] \\
  \end{ytableau}
\end{eqnarray*}

Spots 2 and 6 of $w$ (underlined in green) contain numbers 5 and 3 respectively. These are out of order ($5<3$), so box $(2,6)$ is shaded.

\end{example}

\begin{remark}
    Given an inversions tableau for a permutation $w$, we can read off the permutation by the fact that $w_i = i + \#(\text{shaded boxes in row }i) - \#(\text{shaded boxes in column }i)$.
\end{remark}

Let $\ID{n}$ be the set of inversions diagrams of permutations in $S_n$.
Since shading boxes is equivalent to recording the set of inversions, we can again reconstruct the permutation given its inversions diagram. Similarly, permutations $u,w \in S_n$ satisfy $u<w$ in weak Bruhat order exactly when the shaded boxes of $T_u$ are a subset of those in $T_w$. Thus $\ID{n}$ with the poset structure given by inclusion of diagrams is equivalent to weak Bruhat order. 

In \cite{EG} Edelman and Greene show a correspondence between maximal chains in weak Bruhat order and balanced tableaux.

\begin{definition}
    In an inversions tableau, a \emph{hook} consists of a box $(i,j)$ together with all boxes to the left of $(i,j)$ in the same row and all boxes above $(i,j)$ in the same column. Formally:
    \begin{align*}
        \text{hook}_{i,j} &= \{ (i,j)\} \cup \{(i',j) \mid i' > i\} \cup \{(i,j') \mid j' < j\}
    \end{align*}
\end{definition}

\begin{definition}
   A staircase tableau with distinct numbers in each box is \emph{balanced} if for each hook, the value in the corner of the hook is the median of all the values in the hook. 
\end{definition}
For staircase tableaux indexed as in Example \ref{fig:inversions diagram}, we denote the entry in box $(i,j)$ as $T(i,j)$. 

\begin{example} A balanced (left) and unbalanced (right) hook, with the corner of each hook shaded.
\phantom{a}
\begin{eqnarray*}
\ytableausetup{baseline}
\begin{ytableau}
\none & \none  & \none  & \none & \none & *(white)\\
\none & \none  & \none  & \none & *(white) 2 & *(white) \\
\none & \none  & \none  & *(white) & *(white) 9 & *(white)\\
\none & \none  & *(white)  & *(white) & *(white) 11 & *(white) \\
\none & *(white) 4 & *(white) 1 & *(white) 7 & *(green) 6 & *(white) \\
*(white) & *(white)  & *(white)  & *(white) & *(white) & *(white) & \none & \none \\
\end{ytableau}
\ytableausetup{baseline}
\begin{ytableau}
\none & \none  & \none  & \none & \none & *(white)\\
\none & \none  & \none  & \none & *(white) 10 & *(white) \\
\none & \none  & \none  & *(white) & *(white) 9 & *(white)\\
\none & \none  & *(white)  & *(white) & *(white) 11 & *(white) \\
\none & *(white) 4 & *(white) 1 & *(white) 7 & *(green) 6 & *(white) \\
*(white) & *(white)  & *(white)  & *(white) & *(white) & *(white) \\
\end{ytableau}
\end{eqnarray*}
\end{example}

 We reframe the bijection from \cite{EG} in terms of inversions diagrams:

\begin{proposition}
\label{prop:balanced chain bij}
\cite[Theorem 4.2]{EG}
There is a bijection between maximal chains in $\ID{n}$ and balanced staircase tableaux of size $n-1$; namely a maximal chain in $\ID{n}$ corresponds to the tableau $T$ where $T(i,j)$ is the number of the step where box $(i,j)$ was added in the chain (i.e. the step the transposition $(i,j)$ was added in weak Bruhat order). 
\end{proposition}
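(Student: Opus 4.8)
The plan is to exhibit the claimed bijection by translating between two equivalent pieces of data: on one hand, a maximal chain in $\ID{n}$; on the other, a balanced staircase tableau of size $n-1$. Since this is a reframing of \cite[Theorem 4.2]{EG}, the real content is checking that the dictionary ``inversions diagram $\leftrightarrow$ permutation'' and ``cover relation in $\ID{n}$ $\leftrightarrow$ cover relation in left weak Bruhat order'' (both established in the excerpt) carries the Edelman--Greene bijection over verbatim. First I would recall that a maximal chain $\emptyset = T_{\mathrm{Id}} \lessdot T_{w^{(1)}} \lessdot \cdots \lessdot T_{w^{(N)}} = T_{w_0}$ in $\ID{n}$, where $N = \binom{n}{2}$, is by the poset isomorphism $\ID{n} \cong (S_n, \leq_{\text{weak}})$ exactly a maximal chain $\mathrm{Id} \lessdot w^{(1)} \lessdot \cdots \lessdot w^{(N)} = w_0$ in left weak Bruhat order. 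At step $k$, passing from $w^{(k-1)}$ to $w^{(k)}$ means adding a single inversion, i.e.\ $\Inv(w^{(k)}) \setminus \Inv(w^{(k-1)}) = \{(i_k, j_k)\}$ for a unique box $(i_k, j_k)$; these boxes are distinct and exhaust the staircase, so setting $T(i_k, j_k) := k$ defines a staircase filling with distinct entries $1, \dots, N$.

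Next I would verify that this filling $T$ is balanced, and conversely that every balanced staircase tableau arises this way. For the forward direction: given a maximal chain, the key observation is that the order in which inversions $(i,j)$ may be added is precisely constrained by the requirement that each intermediate $\Inv(w^{(k)})$ be a genuine inversion set (equivalently, transitively closed in the appropriate sense). One shows that $(i,j)$ can be added at a given stage iff, among the boxes of $\text{hook}_{i,j}$, exactly those forming a ``below-or-left'' compatible configuration are already present --- and unwinding this, the position $k = T(i,j)$ at which $(i,j)$ is added relative to the other hook entries forces $T(i,j)$ to be the median of the hook values. This is exactly the combinatorial heart of \cite{EG}, which I would cite rather than reprove; the only thing to check is that the hook in an inversions-diagram-indexed staircase (box $(i,j)$ together with boxes $(i',j)$ for $i' > i$ and $(i,j')$ for $j' < j$) corresponds under the row/column relabeling to the hook used in \cite{EG}. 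For the converse, given a balanced staircase tableau $T$, reading the boxes in the order of their values $1, 2, \dots, N$ and adding the corresponding transpositions produces a sequence of inversion sets; one invokes the \cite{EG} result that balancedness guarantees each partial set is a valid inversion set and each step increases length by one, hence yields a maximal chain in $\ID{n}$. The two constructions are manifestly mutually inverse, since ``record the step at which a box is added'' and ``read boxes in increasing order of their labels'' undo each other.

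The main obstacle --- really the only one --- is the bookkeeping around orientation and indexing conventions: the excerpt orients the staircase with rows increasing bottom-to-top and columns $2, \dots, n$ left-to-right, and defines hooks going up-and-left, whereas \cite{EG} uses its own conventions. I would devote a short paragraph to fixing an explicit order-preserving bijection between the box set $\{(i,j) : 1 \leq i < j \leq n\}$ as drawn here and the staircase shape of \cite{EG}, check that it sends hooks to hooks and the ``add an inversion'' cover relation to \cite{EG}'s cover relation, and then the proposition follows by transport of structure. Everything else is a direct quotation of \cite[Theorem 4.2]{EG} combined with the already-established identification $\ID{n} \cong (S_n, \leq_{\text{weak}})$.
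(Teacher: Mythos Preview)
Your proposal is correct and matches the paper's treatment: the paper does not supply its own proof of this proposition at all, but simply cites \cite[Theorem 4.2]{EG} and remarks that Edelman and Greene prove it via the Strict Rectangle Rule (restated immediately afterward as Lemma~\ref{lemma:balanced condition}). Your plan---invoke the already-established poset isomorphism $\ID{n}\cong (S_n,\leq_{\text{weak}})$, then transport the Edelman--Greene bijection across after checking that hooks and cover relations correspond under the reindexing---is exactly the intended reading, and the bookkeeping you flag about orientation conventions is the only genuine content in the translation.
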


To prove Proposition \ref{prop:balanced chain bij}, Edelman and Greene use the following two lemmas, which we again restate for convenient use in the inversions tableaux context. The color coding for this section matches with Figure \ref{fig:balanced condition}.

\begin{lemma}\label{lemma:balanced condition} \cite[Lemma 4.3]{EG}
 (The Strict Rectangle Rule)  A staircase tableaux is balanced if and only if for all $i<j<k$, the number $\textcolor{orange}{T(i,k)}$ is between $\textcolor{green}{T(i,j)}$ and $\textcolor{green}{T(j,k)}$.
\end{lemma}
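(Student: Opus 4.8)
The plan is to prove the biconditional by treating its two directions separately: the implication ``rectangle rule $\Rightarrow$ balanced'' is a short pairing-and-counting argument, while the converse is the substantive part and will go by induction on triples. For the easy direction, I would fix an arbitrary hook $\text{hook}_{i,k}$ with $k>i$ and partition its $2(k-i-1)$ non-corner cells into the pairs $\{(i,l),(l,k)\}$ for $i<l<k$. For each such $l$, the rectangle rule applied to the triple $i<l<k$ says $T(i,k)$ lies strictly between $T(i,l)$ and $T(l,k)$ (strictly, since all entries are distinct), so exactly one cell of the pair carries a value above $T(i,k)$ and the other a value below. Summing over the $k-i-1$ pairs shows $\text{hook}_{i,k}$ has exactly $k-i-1$ non-corner entries above $T(i,k)$ and $k-i-1$ below, so $T(i,k)$ is the median of its hook; as every hook has this form, $T$ is balanced.

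For the converse I would induct on the span $m=k-i$ of the triple $i<j<k$. When $m\le 2$, $\text{hook}_{i,k}$ consists of exactly the three cells $(i,j),(i,k),(j,k)$, so ``$T(i,k)$ is the median of its hook'' is literally ``$T(i,k)$ is between $T(i,j)$ and $T(j,k)$'': base case. For $m\ge 3$, suppose toward a contradiction that some span-$m$ triple violates the rule, so in the corresponding $\text{hook}_{i,k}$ some pair $\{(i,l),(l,k)\}$ with $i<l<k$ has both entries on one side of $T(i,k)$. Here I would exploit two symmetries of the situation. First, negating every entry of $T$ preserves both ``balanced'' and the rectangle rule, so I may assume the offending pair lies entirely above $T(i,k)$; call this index $j_0$. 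Since $T$ is balanced, exactly $m-1$ of the $2(m-1)$ non-corner entries of $\text{hook}_{i,k}$ are below $T(i,k)$; the pair at $j_0$ supplies none of them, so the other $m-2$ pairs supply all $m-1$, and pigeonhole yields an index $j_1\neq j_0$ whose pair $\{(i,j_1),(j_1,k)\}$ lies entirely below $T(i,k)$. Second, the reflection $(a,b)\mapsto(n+1-b,n+1-a)$ maps the staircase to itself, carries hooks to hooks and corners to corners — hence preserves ``balanced'' and the relations ``above/below $T(i,k)$'' — while reversing the order of the column indices along $\text{hook}_{i,k}$; so after possibly applying it I may assume $j_0<j_1$, giving $i<j_0<j_1<k$.

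Now comes the punchline, which uses the inductive hypothesis on the two triples $(j_0,j_1,k)$ and $(i,j_0,j_1)$, both of span strictly less than $m$ (because $j_0>i$ and $j_1<k$, respectively). From $T(j_0,k)>T(i,k)>T(j_1,k)$, the rectangle rule for $(j_0,j_1,k)$ forces $T(j_0,j_1)>T(j_0,k)>T(i,k)$. But then the rectangle rule for $(i,j_0,j_1)$ demands that $T(i,j_1)$ lie between $T(i,j_0)$ and $T(j_0,j_1)$, whereas $T(i,j_0)>T(i,k)$ and $T(j_0,j_1)>T(i,k)$ while $T(i,j_1)<T(i,k)$, so $T(i,j_1)$ is below both of them — a contradiction. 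This closes the induction.

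The main obstacle I anticipate is precisely this converse direction: ``balanced'' is a list of global median constraints, one per hook, and it is not transparent that they force the purely local pairwise betweenness of the rectangle rule. The crux is the realization that the balance of a single hook, by itself, only makes the ``both-above'' and ``both-below'' defective pairs equinumerous rather than nonexistent — so one genuinely has to bring in strictly smaller triples via induction — together with the observation that the negation and reflection symmetries cut the case analysis down to the single configuration $i<j_0<j_1<k$, on which two applications of the induction hypothesis collide.
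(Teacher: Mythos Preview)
Your argument is correct. The forward direction via pairing non-corner hook cells $\{(i,l),(l,k)\}$ is exactly the right idea, and the converse by induction on the span $k-i$, with the pigeonhole step producing a second defective pair of the opposite sign and then chaining the two smaller triples $(j_0,j_1,k)$ and $(i,j_0,j_1)$, is clean and complete. The two symmetry reductions (negation and the reflection $(a,b)\mapsto(n+1-b,\,n+1-a)$) are legitimate: negation preserves medians and betweenness, and the reflection sends hooks to hooks with corners to corners while reversing the order of the intermediate indices $l$, so you may indeed assume $j_0<j_1$.

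One remark on comparison: the present paper does not supply its own proof of this lemma---it simply cites \cite[Lemma~4.3]{EG} and later (in the proof of Lemma~\ref{lemma:balanced}) says the weakly-balanced analogue ``is the same as the proof of Lemma~4.3 from \cite{EG}, just replacing some strict inequalities with weak inequalities.'' So there is nothing in the paper to compare against beyond the citation. That said, your proof is essentially the Edelman--Greene argument: the hook pairing for one direction and the span induction with a both-above/both-below pigeonhole for the other are the core moves of their Lemma~4.3.
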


\begin{figure}[h]
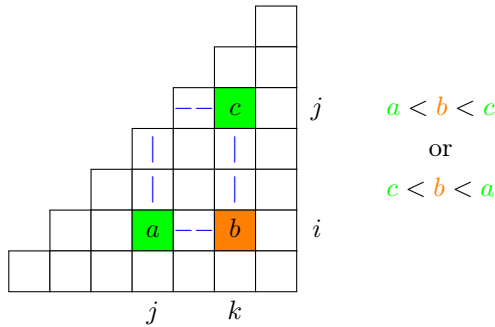

\centering
\begin{ytableau}
\none & \none & \none  & \none  & \none & \none  & *(white) & \none \\
\none & \none & \none  & \none  & \none & *(white)  & *(white) & \none \\
\none & \none & \none  & \none  & *(white) \textcolor{blue}{--} & *(green) c  & *(white) & \none[j] 
& \none & \none & \none[\textcolor{green}{a} < \textcolor{orange}{b} < \textcolor{green}{c}]
\\
\none & \none & \none  & *(white) \textcolor{blue}{|} & *(white)  & *(white) \textcolor{blue}{|} & *(white) & \none 
& \none & \none & \none[\text{or}]
\\
\none & \none & *(white)  & *(white) \textcolor{blue}{|} & *(white)  & *(white) \textcolor{blue}{|} & *(white)& \none 
& \none & \none & \none[\textcolor{green}{c} < \textcolor{orange}{b} < \textcolor{green}{a}]\\
\none & *(white) & *(white)  & *(green) a & *(white) \textcolor{blue}{--} & *(orange) b & *(white) & \none[i]\\
*(white) & *(white) & *(white)  & *(white)  & *(white) & *(white) & *(white) & \none \\
\none & \none & \none  & \none[j]  & \none & \none[k]\\
\end{ytableau}
 \caption{Illustration of the (Strict) Rectangle Rule: The base box \textcolor{orange}{(orange)} has entry $\textcolor{orange}{b=T(i,k)}$, and the opposite corners \textcolor{green}{(green)} have entries $\textcolor{green}{a=T(i,j)}$ and $\textcolor{green}{c=T(k,j)}$.}
    \label{fig:balanced condition}
\end{figure}

 We will sometimes refer to Lemma \ref{lemma:balanced condition} as the ``(Strict) Rectangle Rule'' due to the visual interpretation illustrated in Figure \ref{fig:balanced condition}: The squares involved in Lemma \ref{lemma:balanced condition} fall in three corners of a rectangle, with the ``base box'', box $\textcolor{orange}{(i,k)}$, on the bottom right, the ``opposite corners'', boxes $\textcolor{green}{(i,j)}$ and $\textcolor{green}{(j,k)}$, on the bottom left and top right respectively, and with the fourth corner of the rectangle falling just outside of the main diagonal. We can further relate this condition to rules on the inversion set of a permutation: 

\begin{corollary}
    \label{cor:inversion shape}
An inversions diagram is valid if and only if for any rectangle as in Figure \ref{fig:balanced condition}, if both the opposite corners are shaded (resp. not shaded) then the base box must be as well.
In other words, set $I \subset \{(i,j)| 1 \leq i < j \leq n\}$ is a valid inversion set for some permutation if and only if for $i<j<k$, we have $\textcolor{green}{(i,j)}$ and $\textcolor{green}{(j,k)}$ both in $I$ implies $\textcolor{orange}{(i,k)} \in I$ and both not in $I$ implies $\textcolor{orange}{(i,k)} \notin I$.
\end{corollary}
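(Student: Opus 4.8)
The plan is to prove the two implications separately: the forward direction (``valid $\Rightarrow$ rectangle conditions'') will follow either directly from transitivity of the usual order on the values of $w$, or formally from Lemma~\ref{lemma:balanced condition}; the backward direction (``rectangle conditions $\Rightarrow$ valid'') will follow by explicitly constructing a permutation with inversion set $I$.

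\emph{Forward direction.} First I would take $I=\Inv(w)$. For $i<j<k$ with $(i,j),(j,k)\in I$ we have $w_i>w_j>w_k$, so $(i,k)\in I$; for $i<j<k$ with $(i,j),(j,k)\notin I$ we have $w_i<w_j<w_k$ (strict, since $w$ is a bijection), so $(i,k)\notin I$. Equivalently, one can fix a maximal chain in $\ID{n}$ through $T_w$ and let $T$ be the balanced staircase tableau attached to it by Proposition~\ref{prop:balanced chain bij}, so that $(a,b)\in I \iff T(a,b)\le l(w)$; then Lemma~\ref{lemma:balanced condition} says $T(i,k)$ lies between $T(i,j)$ and $T(j,k)$, which forces both implications.

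\emph{Backward direction.} Assume $I$ satisfies the rectangle conditions. Define a relation $\prec$ on $\{1,\dots,n\}$ by: for $a\neq b$, set $a\prec b$ if either $a<b$ and $(a,b)\notin I$, or $a>b$ and $(b,a)\in I$. By construction $\prec$ is a tournament (total and antisymmetric), so it is a total order iff it has no directed $3$-cycle. Given three elements, relabel them so that $a<b<c$ as integers; the only two possible $3$-cycles among them are $a\prec b\prec c\prec a$ and $a\prec c\prec b\prec a$. In the first, unwinding the definition gives $(a,b),(b,c)\notin I$ but $(a,c)\in I$, contradicting the ``both unshaded $\Rightarrow$ base unshaded'' hypothesis; in the second it gives $(a,b),(b,c)\in I$ but $(a,c)\notin I$, contradicting the ``both shaded $\Rightarrow$ base shaded'' hypothesis. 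Hence $\prec$ is a total order. Let $w\in S_n$ be the unique permutation with $w_a<w_b \iff a\prec b$, i.e.\ $w_a=\#\{b : b\prec a\}+1$. Then for $i<k$ we get $(i,k)\in\Inv(w) \iff w_i>w_k \iff \lnot(i\prec k) \iff (i,k)\in I$, so $\Inv(w)=I$.

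The only delicate point, and the one I would be most careful about, is the translation in the backward direction: setting up $\prec$ so that it simultaneously encodes ``$(a,b)$ is an inversion'' in both the $a<b$ and $a>b$ cases, and then confirming that the two cyclic orders on $\{a<b<c\}$ map precisely onto the two hypotheses of the statement. Everything else is routine once $\prec$ is defined correctly; in particular, reducing transitivity to the absence of $3$-cycles keeps the case analysis to exactly two cases. (A more balanced-tableau-flavored proof of the backward direction --- build a balanced staircase filling whose entries $\le |I|$ are exactly the boxes of $I$ and then quote Proposition~\ref{prop:balanced chain bij} --- is possible, but it needs essentially the same combinatorial content, so I would present the direct construction above.)
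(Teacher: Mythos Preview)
Your argument is correct. The forward direction is exactly the paper's: take a maximal chain through $T_w$, pass to the associated balanced staircase tableau $T$ via Proposition~\ref{prop:balanced chain bij}, and use Lemma~\ref{lemma:balanced condition} together with $(i,j)\in\Inv(w)\iff T(i,j)\le l(w)$. Your direct transitivity version ($w_i>w_j>w_k$, etc.) is of course equivalent and lighter.

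For the backward direction you actually do more than the paper. The paper's proof starts ``For a permutation $w$, take any chain through $T_w$\dots'' and never returns to an arbitrary $I$ satisfying the rectangle conditions; only the forward implication is written out. Your construction --- define the tournament $\prec$, use the two rectangle hypotheses to kill the two possible $3$-cycles on any triple $a<b<c$, and then read off $w$ from the resulting total order --- is a clean, self-contained proof of the converse that does not need Proposition~\ref{prop:balanced chain bij} or Lemma~\ref{lemma:balanced condition} at all. The tradeoff is that the paper's (one-line) approach stays inside the balanced-tableau framework it is building, while yours is more elementary and makes the ``if'' direction genuinely independent of the Edelman--Greene machinery.
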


\begin{proof}
 For a permutation $w$, take any chain through $T_w$ in $\ID{n}$. Then $(i,j) \in \Inv(w)$ if and only if $T(i,j) \leq l(w)$. Thus inclusion corresponds exactly to the conditions above.
\end{proof}


We now introduce the concept of weakly balanced tableaux. This is similar
 to the balanced condition for hooks in balanced labellings from \cite{balancedlabellings}, but here with hooks including all boxes in the staircase, not just those with (non-zero) numbers inside.

\begin{definition}
    A staircase tableau $T$ with not necessarily distinct entries in each box is \emph{weakly balanced} if for all $i<j$, the entry $T(i,j)$ is the median value (counting multiplicities) of all the entries in the hook $\text{hook}_{i,j}$.
\end{definition}

Much like regular balanced tableaux, we can identify if a tableaux is weakly balanced if it follows a similar rectangle rule.

\begin{lemma}
\label{lemma:balanced}
    Let $T$ be a staircase tableau with not necesessarily distinct entries in each box. Then the following are equivalent.
    \begin{enumerate}
        \item $T$ is weakly balanced.
        \item $T$ satisfies the \emph{Rectangle Rule:} 
        
        For all $i<j<k$, the number $\textcolor{orange}{T(i,k)}$ is weakly between $\textcolor{green}{T(i,j)}$ and $\textcolor{green}{T(j,k)}$, i.e. in Figure \ref{fig:balanced condition} we have $\textcolor{green}{a} \leq \textcolor{orange}{b} \leq \textcolor{green}{c}$ 
   or $\textcolor{green}{c} \leq \textcolor{orange}{b} \leq \textcolor{green}{a}$.
    \item There exists a total ordering $\lessdot$ on all the boxes in the staircase such that 
\begin{itemize}
    \item If $u \lessdot v$ then $T(u) \leq T(v)$.
    \item Labeling the boxes according to this total order produces a balanced tableaux. 
\end{itemize}
    \end{enumerate}
\end{lemma}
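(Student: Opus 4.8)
The plan is to prove the cyclic chain of implications $(1)\Rightarrow(2)\Rightarrow(3)\Rightarrow(1)$. The implication $(3)\Rightarrow(1)$ should be the quickest: if a total order $\lessdot$ refining $T$ produces a balanced tableau after relabeling, then in each hook the labels are a permutation of $\{1,\dots,h\}$ (where $h$ is the hook length) with the corner getting the middle label; since $\lessdot$ is order-preserving for $T$, the corner entry $T(i,j)$ must be weakly between the entries sitting at positions below and above it in the sorted order, hence equal to a median value of the multiset of hook entries. I would spell this out by noting that sorting the hook boxes by $\lessdot$ sorts their $T$-values weakly, so the box receiving the median label has a $T$-value that is a median of the multiset.

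For $(1)\Rightarrow(2)$, I would argue exactly as in the strict case (Lemma \ref{lemma:balanced condition}), but tracking multiplicities. Given $i<j<k$, consider $\text{hook}_{j,k}$ and $\text{hook}_{i,j}$ and how they relate to $\text{hook}_{i,k}$. The key combinatorial identity is that the boxes to the left of $(i,k)$ in row $i$ together with $(i,k)$ itself, versus the boxes of $\text{hook}_{i,j}$; and similarly the column part of $\text{hook}_{i,k}$ relates to $\text{hook}_{j,k}$. Following Edelman--Greene's counting argument: weak balancedness at $(i,j)$ says $T(i,j)$ is a median of $\text{hook}_{i,j}$, and at $(j,k)$ says $T(j,k)$ is a median of $\text{hook}_{j,k}$; combining the count of entries $\le T(i,k)$ and $\ge T(i,k)$ across the relevant hooks forces $T(i,k)$ to lie weakly between $T(i,j)$ and $T(j,k)$. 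I expect this is the main obstacle: the strict version uses that all entries are distinct so ``median'' is unambiguous and strict inequalities propagate cleanly, whereas here ties can occur, so I must be careful that a ``median'' of an even-length (in multiplicity) multiset is any value with at least half the entries weakly below and at least half weakly above, and check the inequalities still close up. I would reduce to the strict case by a perturbation/tie-breaking argument if a direct count gets unwieldy — but actually it is cleaner to do $(2)\Rightarrow(3)\Rightarrow(1)$ and get $(1)\Rightarrow(2)$ almost for free, so I'd reorganize accordingly.

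Actually the cleanest route is $(1)\Rightarrow(2)$ directly (short count), then $(2)\Rightarrow(3)$ as the substantive step, then $(3)\Rightarrow(1)$ as above. For $(2)\Rightarrow(3)$: assume the Rectangle Rule holds. I want to produce a total order $\lessdot$ on the $\binom{n-1}{2}$-ish boxes (really on all staircase boxes) refining the $T$-values such that relabeling $1,2,\dots$ along $\lessdot$ yields a balanced tableau. The idea is to break ties among equal $T$-values consistently, then invoke Lemma \ref{lemma:balanced condition} (the Strict Rectangle Rule) to conclude the relabeled tableau is balanced. The tie-breaking must be chosen so that the strict rectangle inequalities hold even on triples $(i,j),(i,k),(j,k)$ where two or three of the $T$-values coincide. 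Here I would use a fixed auxiliary linear order on boxes — e.g. order boxes first by $T$-value, then break ties by reading boxes in a fixed sweep (say, by column then row, or by the Edelman--Greene chute order) — and verify that when $T(i,j)=T(i,k)$ or $T(j,k)=T(i,k)$ etc., the weak inequality from (2) plus the chosen tie-break gives the needed strict inequality in the relabeled tableau. The delicate point is consistency: the tie-break must simultaneously satisfy all triangle constraints, so I would either (a) show a greedy/topological argument that the constraint digraph on equal-valued boxes is acyclic, or (b) exhibit an explicit order (lexicographic on $(T(i,j), j, i)$ or similar) and check it works, which is the concrete approach I'd favor. This step is where the real content lies, and I'd expect to lean on the precise structure of the Strict Rectangle Rule to verify the explicit tie-breaking order does the job.

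\begin{proof}[Proof sketch]
We show $(1)\Rightarrow(2)\Rightarrow(3)\Rightarrow(1)$.

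\emph{$(1)\Rightarrow(2)$.} Fix $i<j<k$ and suppose for contradiction that $\textcolor{orange}{T(i,k)}$ is not weakly between $\textcolor{green}{T(i,j)}$ and $\textcolor{green}{T(j,k)}$; say $T(i,j)<T(i,k)$ and $T(j,k)<T(i,k)$ (the other case is symmetric). Weak balancedness at $(i,j)$ gives that among the boxes of $\text{hook}_{i,j}$ at least half have $T$-value $\le T(i,j)<T(i,k)$; weak balancedness at $(j,k)$ gives the same for $\text{hook}_{j,k}$. Since $\text{hook}_{i,k}$ decomposes into the horizontal arm coming from row $i$ (which, together with $(i,k)$, overlaps the horizontal arm of $\text{hook}_{i,j}$ shifted by $(i,j)$) and the vertical arm coming from column $k$ (overlapping the vertical arm of $\text{hook}_{j,k}$ plus $(j,k)$), a direct count of entries $<T(i,k)$ in $\text{hook}_{i,k}$ shows strictly more than half are $<T(i,k)$, contradicting that $T(i,k)$ is a median of $\text{hook}_{i,k}$. (This is the weighted/multiplicity version of the count in \cite[Lemma 4.3]{EG}.)

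\emph{$(2)\Rightarrow(3)$.} Order all staircase boxes by the lexicographic rule: $(i,j)\lessdot(i',j')$ iff $T(i,j)<T(i',j')$, or $T(i,j)=T(i',j')$ and $(j,i)$ precedes $(j',i')$ in some fixed tie-breaking order chosen below. By construction $u\lessdot v\Rightarrow T(u)\le T(v)$. Let $T'$ be the relabeling assigning to each box its rank in $\lessdot$; then $T'$ has distinct entries. We claim $T'$ satisfies the Strict Rectangle Rule, so by Lemma \ref{lemma:balanced condition} it is balanced. For a triple $i<j<k$: if $T(i,j),T(i,k),T(j,k)$ are pairwise distinct, the weak inequality from (2) is strict and is preserved under the order-preserving relabeling. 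If two or more coincide, the value $T'$ assigns is determined by the tie-break; one checks that choosing the tie-break to be (for instance) the order induced by reading column $j'$ top-to-bottom then moving right makes the relevant strict inequality hold in each such case. Thus $T'$ is balanced and $\lessdot$ witnesses (3).

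\emph{$(3)\Rightarrow(1)$.} Let $\lessdot$ and the induced balanced relabeling be as in (3). Fix a box $(i,j)$ with $\text{hook}_{i,j}$ of size $h=2m+1$. In the balanced tableau the labels on $\text{hook}_{i,j}$ are some $h$ distinct integers, and the corner $(i,j)$ carries their median, i.e. exactly $m$ hook-labels are smaller and $m$ are larger. Since relabeling by $\lessdot$ is order-preserving for $T$, sorting the hook boxes by $\lessdot$ sorts their $T$-values weakly; hence at least $m$ boxes of $\text{hook}_{i,j}$ have $T$-value $\le T(i,j)$ and at least $m$ have $T$-value $\ge T(i,j)$ (counting multiplicities), so $T(i,j)$ is a median of the multiset of hook entries. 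Therefore $T$ is weakly balanced.
\end{proof}
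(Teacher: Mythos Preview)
Your implications $(1)\Rightarrow(2)$ and $(3)\Rightarrow(1)$ are fine and essentially match the paper's treatment (the paper bundles $(1)\Leftrightarrow(2)$ by citing \cite{EG} verbatim with weak inequalities, and gets $(3)\Rightarrow(2)$ rather than $(3)\Rightarrow(1)$, but these are equivalent in spirit).

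The gap is in $(2)\Rightarrow(3)$. Your proposed explicit tie-break does not work, and ``one checks'' hides a false statement. Take the constant tableau $T\equiv v$ on the size-$3$ staircase (boxes $(1,2),(1,3),(2,3),(1,4),(2,4),(3,4)$). Condition $(2)$ holds trivially. Your rule ``column left-to-right, within a column top-to-bottom'' gives the relabeling $T'(1,2)=1$, $T'(2,3)=2$, $T'(1,3)=3$, $T'(3,4)=4$, $T'(2,4)=5$, $T'(1,4)=6$. But then for the triple $1<2<3$ we need $T'(1,3)$ strictly between $T'(1,2)$ and $T'(2,3)$, i.e.\ $3$ between $1$ and $2$, which fails. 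More generally, no single coordinate-lexicographic order on boxes can serve as a universal tie-break: among three equal-valued boxes $(i,j),(i,k),(j,k)$ the \emph{middle} one must be $(i,k)$, and this constraint does not come from any product order on $(\text{row},\text{column})$. Your option~(a) (show the constraint digraph on equal-valued boxes is acyclic) is the right instinct, but you have not carried it out, and it is not a one-line check.

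The paper's $(2)\Rightarrow(3)$ avoids this entirely by a different mechanism. For each value $v$, the set of boxes with $T\le v$ is, by the Rectangle Rule together with Corollary~\ref{cor:inversion shape}, a valid inversions diagram; so the level sets of $T$ form a (not necessarily saturated) chain in $\ID{n}$. Refine it to a saturated chain and apply Proposition~\ref{prop:balanced chain bij}: the resulting balanced staircase tableau is exactly a total order $\lessdot$ refining the $T$-values. This argument produces the tie-break for free from the Edelman--Greene bijection rather than guessing one and verifying triangle constraints by hand. If you want to repair your approach, the cleanest fix is to replace the explicit lexicographic tie-break with this level-set/chain argument.
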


For the third condition, we are essentially assigning ``tie breaks'' to entries of the same value which make the tableau balanced.

\begin{proof}
  The proof that 1 and 2 are equivalent is the same as the proof of Lemma 4.3 from \cite{EG}, just replacing some strict inequalities with weak inequalities. 
  
   Now we show that conditions 2 and 3 are equivalent. If $T$ satisfies condition 3, then impose a total ordering on the numbers which makes it balanced. All rectangles for $i<j<k$ in $T$ have corner entries satisfying the strict inequality relations. Returning to our weak order where a number is allowed to appear multiple times, we still maintain loose inequality.

   In the other direction, suppose we have a filling $T$ satisfying condition 2. For each $i$, let $T_i$ be the inversions tableaux with boxes of squares containing numbers $1, \dots, i$ shaded. The weak inequality condition from 2 guarantees that each $T_i$ is a valid inversions diagram. Further, containment gives us that $T_{\text{Id}} < T_{w_1} < T_{w_2} < \dots < T_{w_0}$ is a chain in the poset $\ID{n}$. Thus there is at least one saturated chain in $\ID{n}$ going through all these diagrams, which by Proposition \ref{prop:balanced chain bij} corresponds to a balanced tableaux $T'$. The box labels of $T'$ provide a total ordering for  $T$, and we can recover $T$ by replacing all entries $ht(w_{i-1})+1, \dots ht(w_i)$ of $T'$ with the number $i$.

\end{proof}


Finally, we are prepared to define the main object of our paper -- inversions tableaux.

\begin{definition}\label{Def:??-tableaux}
An \emph{inversions tableau} of shape $w$ is an inversions diagram $T$ of shape $w$ where we put a number $1, \dots , n$ in each shaded box (and all unshaded boxes are assumed to be 0), such that the diagram meets the following rules.

\begin{enumerate}
    \item[(IT1)] (Rectangle Rule:) The tableaux is weakly balanced. 
    \item[(IT2)] No two shaded boxes in a column have the same entry.
    \item[(IT3)] $T(i,i+1) \leq i$ for all $1 \leq i < n$.
\end{enumerate}
\end{definition}

We denote the set of inversions tableaux for the permutation $w \in S_n$ as $\IT(w)$. These give a generating function for Schubert polynomials which is very reminiscent of the SSYT generating function for Schur polynomials. Similarly to SSYT, define the \emph{weight} of an inversions tableaux $T$ to be the tuple $wt(T) := (m_1,\ldots,m_{n-1})$, where $m_i$ is the number of boxes in $T$ with entry $i$. Note we do not count any of the zeros.

Inversions tableaux are equivalent to flagged inversion fillings from \cite{Kelly.thesis}, where it was also shown they satisfy Theorem \ref{thrm:schub formula}. However this paper has the first appearance of (IT3) as a characterization. Inversion fillings are said to be \emph{flagged} if they satisfy the rule (IT3') stated further down.

\begin{theorem}
\label{thrm:schub formula}
The Schubert polynomial for $w \in S_n$ is given by
    $$\mathfrak{S}_w = \sum\limits_{T \in \IT(w)} x^{wt(T)}.$$
\end{theorem}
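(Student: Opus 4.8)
The plan is to construct an explicit weight-preserving bijection between $\IT(w)$ and $\RP(w)$, since Theorem (Billey–Bergeron) already tells us $\mathfrak{S}_w = \sum_{P \in \RP(w)} x^{wt(P)}$. The natural map goes as follows: given a reduced pipe dream $P$ for $w$, a crossing tile at position $(a,b)$ of the $n\times n$ grid records a reduced word for $w$ when the crossings are read in some canonical order (say, reading rows bottom-to-top, and within each row right-to-left, or some order adapted to the conventions here). Each crossing tile corresponds to an inversion of $w$; specifically, the two pipes passing through a crossing tile have some labels $p < q$, and they cross exactly once (reducedness), so this crossing "is" the inversion of $w$ corresponding to that pair. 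The idea is to place, in box $(i,j)$ of the staircase inversions diagram, the row index of the crossing tile in $P$ that realizes the inversion $(i,j)$. Then the weight $wt(P) = (d_1,\dots,d_n)$, where $d_i$ counts crossings in row $i$, becomes exactly the number of boxes in the tableau filled with the value $i$, i.e. $wt(T)$. So the content of the theorem is that this assignment $P \mapsto T$ is a well-defined bijection onto $\IT(w)$.

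First I would set up the combinatorics of reduced pipe dreams precisely: label the pipes, track which pipe occupies which square, and verify that a reduced pipe dream gives a bijection between its crossing tiles and $\Inv(w)$, with the crossing of pipes labeled $p<q$ sitting in a well-defined row. This row index is at most... here I need the bound forcing (IT3): the crossing realizing the adjacent inversion involving position $i$ (roughly, $T(i,i+1)$) must lie in one of rows $1,\dots,i$, because of the triangular shape of the pipe dream and the starting positions of the pipes. More generally the row index of any crossing tile in the staircase-type grid is constrained, giving the entry $1,\dots,n$ condition. Next I would verify (IT2): two crossings in the same column $j$ of the inversions tableau both involve the pipe labeled... (the pipe that ends or starts at position $j$), and that single pipe can only pass through one crossing per row of $P$, so the two row indices differ — this is where reducedness and the "one pipe, one row-crossing at a time" observation enter. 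Then (IT1), the Rectangle Rule / weak balance: for $i<j<k$, the three inversions $(i,j),(j,k),(i,k)$ all must be inversions if the outer two are (Corollary on inversion shape), and the claim $T(i,k)$ weakly between $T(i,j)$ and $T(j,k)$ should follow from tracing the three pipes $i,j,k$ through $P$ — a "braid-like" planarity argument showing the middle crossing's row is sandwiched.

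The inverse map takes $T \in \IT(w)$ and must reconstruct a reduced pipe dream. Here I would use condition (3) of Lemma \ref{lemma:balanced} (the tie-breaking total order making $T$ balanced), which by Proposition \ref{prop:balanced chain bij} corresponds to a maximal chain in $\ID{n}$, i.e. a reduced word $a_1,\dots,a_{l(w)}$ for $w$; then (IT2) and (IT3) are precisely the conditions on this reduced word needed to lay down a planar reduced pipe dream row by row (the entry bound $\le i$ in (IT3) controls where in the grid each generator can be placed, and (IT2) guarantees no column conflict). One subtlety: the tie-break in Lemma \ref{lemma:balanced}(3) is not unique, but the resulting pipe dream should be independent of the choice (permuting equal entries corresponds to commuting moves among crossings in the same grid-row, which don't change $P$), so I would check well-definedness. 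Finally I would confirm the two maps are mutually inverse and that weight is preserved, which is immediate from the construction.

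The main obstacle I expect is not any single verification but the bookkeeping around conventions: the paper uses a specific orientation for the staircase (rows increasing bottom-to-top, columns $2,\dots,n$) and matrix coordinates for the pipe dream, and the correspondence "crossing tile in row $a$ of $P$ $\leftrightarrow$ inversion $(i,j)$ gets entry $a$" has to be pinned down carefully so that (IT3) comes out as $T(i,i+1)\le i$ rather than some shifted bound, and so that the Rectangle Rule inequalities have the right direction. In particular, proving the sandwiching inequality for (IT1) via pipe-tracing — showing that when pipes $i,j,k$ interact, the row of the $(i,k)$-crossing lies weakly between the rows of the $(i,j)$- and $(j,k)$-crossings — is the step most likely to require a genuinely careful case analysis (e.g. by induction on $l(w)$ or via a chute/ladder-move argument reducing to a local configuration). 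Everything else is essentially translation between the pipe-dream picture and the tableau picture.
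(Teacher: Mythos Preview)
Your proposal is correct and matches the paper's approach almost exactly: the same map $P\mapsto T$ (record in box $(i,j)$ the row where pipes $i$ and $j$ cross), the same verifications of (IT2) and the row-bound, and the same inverse via Lemma~\ref{lemma:balanced}(3) and Proposition~\ref{prop:balanced chain bij}, including the check that the choice of tie-breaking order does not matter.

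The one genuine divergence is your plan for (IT1). You propose a direct pipe-tracing argument: for $i<j<k$, follow the three pipes through $P$ and argue the $(i,k)$-crossing row is sandwiched between the other two (with attendant case analysis, including the cases where some of the three boxes are unshaded and carry a $0$). The paper sidesteps this entirely. It observes that reading the crossings of $P$ row by row (bottom to top, left to right within a row) yields a saturated chain in weak Bruhat order from the identity to $w$; extending to a maximal chain and applying Proposition~\ref{prop:balanced chain bij} produces a genuinely balanced tableau, and after reversing the labels this balanced tableau is precisely a tie-breaking total order refining the entries of $\phi(P)$. Condition (3) of Lemma~\ref{lemma:balanced} then gives weak balance for free. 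So the same Edelman--Greene machinery you invoke for the inverse map also handles the forward verification of (IT1), and the braid-like planarity case analysis you anticipate as ``the step most likely to require a genuinely careful case analysis'' never needs to be carried out.
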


\begin{example}
\label{example:inversionstableaux}
The inversions tableaux corresponding to the same terms as in Example \ref{example:pipe dreams}.
$$
\begin{matrix}
\mathfrak{S}_{431562} 
   =& x_1^3 x_2^2 x_4 x_5 \ \ \ \ &+& x_1^3 x_2^2 x_3 x_5 \ \ \ \ &+& x_1^3 x_2^2 x_3 x_4 \ \ \ \  \\
  & \begin{ytableau}
  \none & \none & \none & \none & *(green) 5 & \none[5] \\
  \none & \none & \none & *(white) & *(green) 4 & \none[4] \\
  \none & \none & *(white) & *(white) & *(white) & \none[3] \\
  \none & *(green) 2 & *(white) & *(white) & *(green) 2& \none[2] \\
  *(green) 1 & *(green) 1 & *(white) & *(white) & *(green) 1 & \none[1] \\
  \none[2] & \none[3] & \none[4] & \none[5] & \none[6] \\
  \end{ytableau}
 && \begin{ytableau}
  \none & \none & \none & \none & *(green) 5 & \none[5] \\
  \none & \none & \none & *(white) & *(green) 3 & \none[4] \\
  \none & \none & *(white) & *(white) & *(white) & \none[3] \\
  \none & *(green) 2 & *(white) & *(white) & *(green) 2& \none[2] \\
  *(green) 1 & *(green) 1 & *(white) & *(white) & *(green) 1 & \none[1] \\
  \none[2] & \none[3] & \none[4] & \none[5] & \none[6] \\
  \end{ytableau}
 && \begin{ytableau}
  \none & \none & \none & \none & *(green) 4 & \none[5] \\
  \none & \none & \none & *(white) & *(green) 3 & \none[4] \\
  \none & \none & *(white) & *(white) & *(white) & \none[3] \\
  \none & *(green) 2 & *(white) & *(white) & *(green) 2& \none[2] \\
  *(green) 1 & *(green) 1 & *(white) & *(white) & *(green) 1 & \none[1] \\
  \none[2] & \none[3] & \none[4] & \none[5] & \none[6] \\
  \end{ytableau}
\end{matrix}
$$
\end{example}

To prove Theorem \ref{thrm:schub formula}, we will give a bijection between $\IT(w)$ and $\RP(w)$ in which each occurrence of the number $i$ in a box of our inversions tableau corresponds to a crossing in the $i$'th row of our pipe dream. We will do this using an alternative (equivalent) definition of inversions tableaux provided by the next lemma. 

\begin{lemma}\label{lemma:weaker def}
An inversions diagram for $w$ with numbers $1, \dots , n$ in each shaded box is an inversions tableaux of shape $w$ if and only if it satisfies conditions (IT1) and (IT2) from Definition~\ref{Def:??-tableaux}, as well as the following:
\begin{enumerate}
    \item [(IT3')] All numbers in row $i$ are less than or equal to $i$.
\end{enumerate}
\end{lemma}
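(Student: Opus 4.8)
The plan is to prove the equivalence of the two definitions by showing that, in the presence of (IT1) and (IT2), condition (IT3) and condition (IT3') are equivalent. One direction is immediate: if all numbers in row $i$ are $\leq i$, then in particular the diagonal box $T(i,i+1) \leq i$, so (IT3') implies (IT3). The content is in the converse: assuming (IT1), (IT2) and (IT3), we must show every shaded box $(i,j)$ with $j > i+1$ also has entry $\leq i$.

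The key idea is to use the Rectangle Rule (Lemma~\ref{lemma:balanced}) to "pull" the bound on the diagonal entry $T(i,i+1)$ rightward along row $i$. Fix a row $i$ and a shaded box $(i,j)$ with $j \geq i+2$. I would apply the Rectangle Rule to the triple $i < i+1 < j$: the base box is $(i,j)$, and the opposite corners are $(i,i+1)$ and $(i+1,j)$. The rule says $T(i,j)$ is weakly between $T(i,i+1)$ and $T(i+1,j)$. If $T(i,j) \leq T(i,i+1) \leq i$ we are done; the problematic case is when $T(i,j)$ lies between $T(i+1,j)$ and $T(i,i+1)$ with $T(i,j) \geq T(i,i+1)$, so that the bound must instead come from controlling $T(i+1,j)$. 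Here I would want to induct. The natural induction is downward on the row index, or equivalently on the staircase: for the bottom-most relevant row the claim is easy since row $1$ has only the box $(1,2)$, and (IT3) gives $T(1,2)\leq 1$ directly. More precisely, I would induct on $i$ going from small to large, but the dependence of $T(i,j)$ on $T(i+1,j)$ points the other way, so the cleaner framing is: induct on $j - i$, the "distance from the diagonal" of the box, showing that every box $(i,j)$ has $T(i,j) \leq i$.

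Carrying this out: for $j - i = 1$ the bound $T(i,i+1)\leq i$ is exactly (IT3). For $j - i \geq 2$, consider box $(i,j)$ and apply the Rectangle Rule to $i < j-1 < j$, with base box $(i,j)$ and opposite corners $(i,j-1)$ and $(j-1,j)$. By the inductive hypothesis applied to $(i,j-1)$ (which has distance $j-1-i$) we get $T(i,j-1) \leq i$, and applied to $(j-1,j)$ (distance $1$) we get $T(j-1,j) \leq j-1$. The Rectangle Rule puts $T(i,j)$ weakly between these two, so $T(i,j) \leq \max\{i, j-1\} = j-1$, which is \emph{not} quite the bound we want --- it only gives $T(i,j) \leq j-1$. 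So this particular rectangle is too weak, and the main obstacle is choosing the right rectangle (or right chain of rectangles) so that the bound telescopes down to $i$ rather than to $j-1$.

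The resolution I would pursue is to use the chain of rectangles built from the successive diagonal steps: apply the Rectangle Rule to each triple $i < i+1 < j$, $i < i+2 < j$, $\dots$, or rather to build a bound on $T(i,j)$ from the bounds $T(k,k+1)\le k$ for $i \le k < j$ together with a careful case analysis, arguing that if $T(i,j) > i$ then following the Rectangle Rule forces a violation of (IT2) (no repeats in a column) somewhere in column $j$. Concretely, the boxes $(1,j), (2,j), \dots, (j-1,j)$ in column $j$ must have distinct entries among $\{1,\dots,n\}$, and the Rectangle Rule relating each $(k,j)$ to $(k,k+1)$ and $(k+1,j)$ constrains these entries; combined with $T(k,k+1) \le k$, a counting/pigeonhole argument on column $j$ should force $T(i,j) \le i$ for all $i$. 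I expect this pigeonhole step --- showing the entries of column $j$, ordered down the column, are bounded by their row indices because there are not enough available small values otherwise --- to be the crux of the argument, and the rest to be bookkeeping.
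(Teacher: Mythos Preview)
Your easy direction is fine. For the hard direction, you actually had the right idea in your first paragraph and then talked yourself out of it. The rectangle $i < i+1 < j$ with base box $(i,j)$ and opposite corners $(i,i+1)$, $(i+1,j)$, combined with downward induction on the row index, \emph{is} the paper's proof. You noted that the dependence of $T(i,j)$ on $T(i+1,j)$ ``points the other way'' --- exactly; that is why one inducts from row $n-1$ down to row $1$, so that when you treat row $i$ the bound $T(i+1,j)\le i+1$ is already available.

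What you missed is the one-line finish using (IT2). With $T(i,i+1)\le i$ by (IT3) and $T(i+1,j)\le i+1$ by induction, the Rectangle Rule gives $T(i,j)\le i+1$. If $T(i,j)=i+1$, then we must be in the branch $T(i,i+1)\le T(i,j)\le T(i+1,j)$, forcing $T(i+1,j)\ge i+1$; combined with $T(i+1,j)\le i+1$ this gives $T(i+1,j)=i+1=T(i,j)$, a repeat in column $j$ contradicting (IT2). Hence $T(i,j)\le i$, and the induction closes.

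Your switch to induction on $j-i$ is precisely where the argument loses traction (as you observed, it only yields $T(i,j)\le j-1$), and the proposed pigeonhole on column $j$ is both vaguer and unnecessary. Go back to your first rectangle, keep the downward-row induction, and invoke (IT2) at the end.
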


\begin{proof}
    Condition (IT3) is a stronger condition than (IT3'), so we have immediately that the new conditions of Lemma \ref{lemma:weaker def} imply Definition \ref{Def:??-tableaux}. It remains to show that conditions (IT1), (IT2) and (IT3) together imply (IT3'). We will induct downwards on the row number. For row $n-1$, the only box is on the main diagonal, so, by condition (IT3'), its entry is at most $n-1$. 
    
    We now show for row $k$. We know the box $(k,k+1)$ must have entry $a \leq k$ by condition (IT3'). Given any other box on the $k$'th row, let its entry be $b$ and the entry of the box immediately above it (in row $k+1$) be $c$. Then, by Lemma \ref{lemma:balanced condition}, to satisfy condition (IT1), either $c \leq b \leq a \leq k$, or $a \leq b \leq c$. By the inductive assumption, $c$ is at most $k+1$, so $b$ must be as well. We rule out the case where $b=c=k+1$ because this would violate condition (IT2).
 \end{proof}

Now we can prove Theorem \ref{thrm:schub formula} by providing a weight preserving bijection between pipe dreams and inversions tableaux satisfying rules (IT1),(IT2), and (IT3'). Here, by the weight of a pipe dream, we mean the tuple recording the number of crossings in each row. 

\begin{proof}[Proof of Theorem \ref{thrm:schub formula}]
Given a pipe dream $P \in \RP(w)$, pipes $i$ and $j$ cross exactly if $(i,j) \in \Inv(w)$. Let $r_{ij}$ be the row number in which they cross. Define $\phi(P)$ to be the inversions tableaux $T$ such that $T(i,j) = r_{ij}$ for all $(i,j) \in \Inv(w)$. Then $\phi: \RP(w) \rightarrow \IT(w)$ preserves weights, and we will show it is a bijection.
    First, given any pipe dream $P$, we show that $\phi(P)$ satisfies rules (IT1), (IT2) and (IT3'). 
    
    (IT1): Notice that if we add crossings of our pipe dream $P$ from bottom to top and within each row from left to right, then we get permutations which form a saturated chain from $\emptyset$ to $w$ in weak Bruhat order.  Equivalently, we have a saturated chain in $\ID{}$ from the identity to $T_w$.
    At each step the crossing we add corresponds to adding precisely the box indexed by the two pipe numbers we crossed. We can then extend this chain to a saturated chain from $\emptyset$ to $T_{w_0}$. By Proposition \ref{prop:balanced chain bij}, this gives a balanced tableaux with boxes numbered in the order they were added in the chain. If we reverse the order of all numbers (i.e replace $i$ with $(n-1)n/2-i+1$) then we still have a balanced tableaux. Observe that this is now a total ordering on the entries of $\phi(P)$. In particular, entries of boxes added after $T_w$ in our chain are all 0, and labels from the crossings now correspond to the row the crossing occurred in.

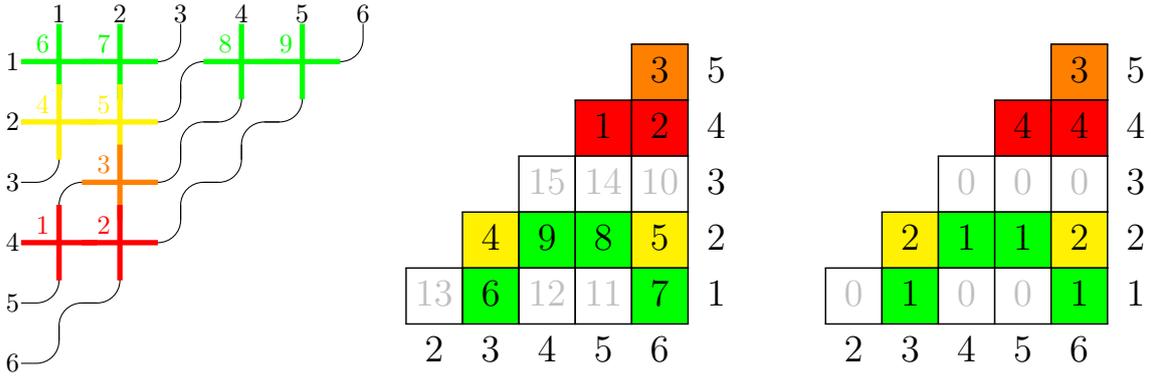
\begin{figure}[h]
\centering
    
$
\begin{matrix}
\begin{tikzpicture}[scale=0.8]
\bumps{1/3,2/3,2/4, 3/1,3/3,5/1}
\pdcaps{1/6,2/5,3/4,4/3,5/2, 6/1}
\crosses[green, line width=2pt]{1/1,1/2, 1/4,1/5}
\crosses[yellow, , line width=2pt]{2/1,2/2}
\crosses[orange, , line width=2pt]{3/2}
\crosses[red, , line width=2pt]{4/1,4/2}
\begin{scope}[pdshift]	
	\node[above left] at (1,1) {$\textcolor{green}{6}$};
    \node[above left] at (2,1) {$\textcolor{green}{7}$};
	\node[above left] at (4,1) {$\textcolor{green}{8}$};
    \node[above left] at (5,1) {$\textcolor{green}{9}$};
    \node[above left] at (1,2) {$\textcolor{yellow}{4}$};
    \node[above left] at (2,2) {$\textcolor{yellow}{5}$};
    \node[above left] at (2,3) {$\textcolor{orange}{3}$};
    \node[above left] at (1,4) {$\textcolor{red}{1}$};
    \node[above left] at (2,4) {$\textcolor{red}{2}$};
\end{scope}
\foreach \i in {1,...,6} {
	\node[left] at (0,-\i+0.5) {\i};
    \node[above] at (\i-0.5,0) {\i};
}
\end{tikzpicture}
&
\ytableausetup{aligntableaux=bottom}
{\Large
\begin{ytableau}
   \none & \none & \none & \none & *(orange) 3 & \none[5] \\
  \none & \none & \none & *(red) 1 & *(red) 2 & \none[4] \\
  \none & \none & *(white) \textcolor{lightgray}{15} & *(white)\textcolor{lightgray}{14} & *(white) \textcolor{lightgray}{10} & \none[3] \\
  \none & *(yellow) 4 & *(green) 9 & *(green) 8 & *(yellow) 5 & \none[2] \\
  *(white) \textcolor{lightgray}{13}& *(green) 6 & *(white) \textcolor{lightgray}{12} & *(white) \textcolor{lightgray}{11}& *(green) 7 & \none[1] \\
  \none[2] & \none[3] & \none[4] & \none[5] & \none[6]\\
\end{ytableau}
}
&
\ \ \
&
\ytableausetup{aligntableaux=bottom}
{\Large
\begin{ytableau}
   \none & \none & \none & \none & *(orange) 3 & \none[5] \\
  \none & \none & \none & *(red) 4 & *(red) 4 & \none[4] \\
  \none & \none & *(white) \textcolor{lightgray}{0} & *(white)\textcolor{lightgray}{0} & *(white) \textcolor{lightgray}{0} & \none[3] \\
  \none & *(yellow) 2 & *(green) 1 & *(green) 1 & *(yellow) 2 & \none[2] \\
  *(white) \textcolor{lightgray}{0}& *(green) 1 & *(white) \textcolor{lightgray}0 & *(white) \textcolor{lightgray}{0}& *(green) 1 & \none[1] \\
  \none[2] & \none[3] & \none[4] & \none[5] & \none[6]\\
\end{ytableau}
}
\end{matrix}
$
    
     \caption{Left: A pipe dream with crosses labeled in the order we add them in weak Bruhat order. Middle: An inversions diagram with inversions labeled in the order the corresponding crosses are added, then extended to a balanced tableaux. Right: the corresponding inversions tableaux.}
        \label{fig:in proof}
    \end{figure}

    (IT2): The nonzero entries in column $i$ of $\phi(P)$ are the rows in which there is a crossing through which pipe $i$ passes vertically. Since pipe $i$ cannot pass vertically through two different pipes in the same row, the nonzero entries in column $i$ of $\phi(P)$ must be distinct. 

    (IT3'): Pipe $i$ cannot have any crossings below row $i$, so all entries in row $i$ of $\phi(P)$ are at most $i$. 

    To complete the proof, we now construct the inverse of $\phi$. Consider a tableau $T \in \IT(w)$. Since $T$ is weakly balanced, by Lemma \ref{lemma:balanced} we can assign a total order on boxes to get a balanced tableaux compatible with the numbering on $T$ (so unshaded boxes will have smaller content than shaded ones). Now reverse this order and list the indices for boxes in this new order as $(i_1,j_1),(i_2,j_2),\dots,(i_{l(w)},j_{l(w)}),$ followed by all unshaded boxes.
    
    Beginning from the identity, we need to construct a sequence of pipe dreams $P_0, P_1, \dots, P_{l(w)}$, where $P_0$ is the identity pipe dream, $P_m$ is a pipe dream for the permutation $w_m$ with inversion set $(i_1,j_1),\dots,(i_m,j_m)$, and where $P_{m}$ is obtained from $P_{m-1}$ by adding the crossing of pipes $i_{m}$ and $j_{m}$ in row $T(i_m,j_m)$.
    We will assume for induction that we have already constructed $P_1, \dots, P_m$, and show we can add the desired crossing to get $P_{m+1}$.
    
    Again by Proposition \ref{prop:balanced chain bij} we know that adding the inversions in this order gives us a chain in weak Bruhat order, so $w_{m+1}$ directly covers $w_m$. Thus in $P_m$, pipes $i_{m+1}$ and $j_{m+1}$ must be next to each other since crossing them should be a simple transposition. Both pipes start in or below row $T(i_{m+1},j_{m+1})$ by rule (IT3'). In $P_w$, if neither pipe $i_{m+1}$ nor $j_{m+1}$ cross a pipe of smaller index than itself in row $T(i_m,j_m)$, then after their last crossing they remain next to each other from row $T(i_m,j_m)$ up, and thus we can make them cross in any row $T(i_{m+1},j_{m+1}) \leq T(i_m,j_m)$.
    If at least one of them does cross a pipe of smaller index than itself in row $T(i_m,j_m)$, then by rule (IT2) we must have $T(i_{m+1},j_{m+1}) \leq T(i_m,j_m)-1$. Conveniently $P_w$ has no crossings above row $T(i_m,j_m)$, so pipes $i_{m+1}$ and $j_{m+1}$ will be next to each other for all possible rows $T(i_{m+1},j_{m+1})<T(i_m,j_m)$, and we can again cross them where we need. These two cases are illustrated in Figure \ref{fig:add crossing}.

 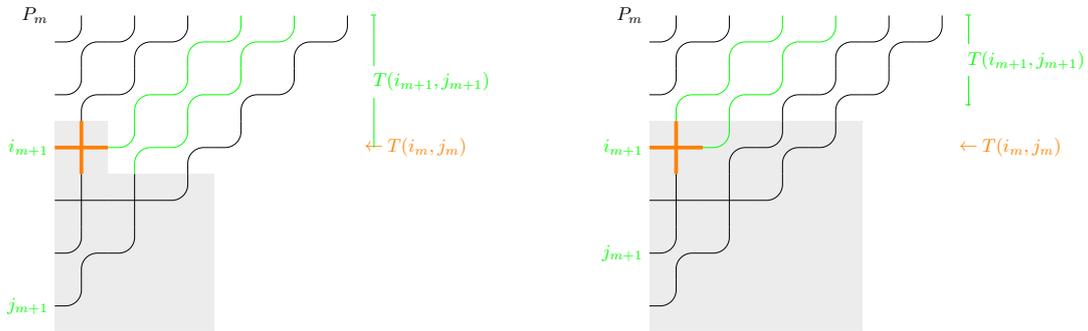
\begin{figure}[h]
        \centering
\scalebox{0.7}{
$
\begin{matrix}
\begin{tikzpicture}

\shadedsquares[gray!30]{3/1,
4/1,4/2,4/3,
5/1,5/2,5/3,
6/1,6/2,6/3}
\crosses{4/1,4/2}
\crosses[orange, line width=2pt]{3/1}
\bumps{1/1,1/2,2/1,5/1}
\pdcaps{1/6,2/5,3/4,4/3,5/2,6/1}
\bumps[green]{1/4,2/3,3/2}
\bumps[][green]{1/3,2/2}
\bumps[green][]{1/5,2/4,3/3}

\node[left] at (0,0) {$P_m$};
\node[left,green] at (0,-2.5) {$i_{m+1}$};
\node[left,green] at (0,-5.5) {$j_{m+1}$};
\node[green] at (6,0) {-};
\node[orange] at (6,-2.5) {$\leftarrow$};
\node[orange] at (7,-2.5) {$T(i_m,j_m)$};
\draw[green] (6,0) -- (6,-2.5) node [midway, fill=white, right=-4pt] {$T(i_{m+1},j_{m+1})$};

\end{tikzpicture}
&
\ \ \ \ \ \ \ \ \ \
&
\begin{tikzpicture}

\shadedsquares[gray!30]{3/1, 3/2, 3/3, 3/4,
4/1,4/2,4/3, 4/4,
5/1,5/2,5/3, 5/4,
6/1,6/2,6/3, 6/4}
\crosses{4/1,4/2}
\crosses[orange, line width=2pt]{3/1}
\bumps{1/1,5/1,1/5,2/4,3/3}
\pdcaps{1/6,2/5,3/4,4/3,5/2,6/1}
\bumps[green][]{1/4,2/3,3/2}
\bumps[green]{1/3,2/2}
\bumps[][green]{1/2,2/1}

\node[left] at (0,0) {$P_m$};
\node[left,green] at (0,-2.5) {$i_{m+1}$};
\node[left,green] at (0,-4.5) {$j_{m+1}$};
\node[green] at (6,0) {-};
\node[orange] at (6,-2.5) {$\leftarrow$};
\node[orange] at (7,-2.5) {$T(i_m,j_m)$};
\node[green] at (6,-1.7) {-};
\draw[green] (6,0) -- (6,-1.7) node [ midway, fill=white, right=-4pt] {$T(i_{m+1},j_{m+1})$};

\end{tikzpicture}
\end{matrix}
$
}
    
        \caption{Given the beginning of the pipe dream up to the first $m$ crossings as shaded, the green pieces of pipe indicate locations where the $(m+1)$st crossing can occur in some row $T(i_{m+1},j_{m+1})$.}
        \label{fig:add crossing}
    \end{figure}

    We note that this map is well-defined across multiple choices of total ordering; different paths from the identity to $w$ in weak Bruhat order can only differ by changing the order according to the relations $s_i s_j =s_j s_i$ for $|i-j| \leq 2$ or $s_i s_{i+1} s_i = s_{i+1} s_i s_{i+1}$. In the first case, this corresponds to crossing two completely separate pairs of pipes which we can do in either order with the same result. We will default to adding the leftmost crossing first in all cases. In the second case, all these crossings occur among the same three pipes $h<k<l$. Rule (IT2) mandates that $T(h,l) \neq T(k,l)$, so there can only be one compatible total ordering on these three crossings.
\end{proof}

\begin{remark}
  Starting with an inversions tableaux $T$ for a permutation $w$, this inverse bijection also allows us to obtain the $a$-compatible sequence (as defined in \cite{bergeron1993rc}) corresponding to $T$. When doing the inverse map in the proof above, instead of drawing crossings, record the pair $(a_m, T(i_m,j_m))$ at each step, where $a_m$ is the index of the simple transposition such that $w_m = s_{a_m} w_{m-1}$. Then $a = a_{l(w)} \dots a_2 a_1$ is a reduced word for $w$, and $\alpha=(T(i_{l(w)},j_{l(w)}), \dots, T(i_2,j_2), T(i_1,j_1))$ is an $a$-compatible sequence. The map $\Theta(T): T \mapsto (a,\alpha)$ is a bijection from inversions tableaux to $a$-compatible sequences for $w$.
\end{remark}

\begin{remark}
   The above proof also leads to bijections between balanced labellings and both inversions tableaux and pipe dreams.
Given an inversions tableaux $T$ we obtain the corresponding balanced labelling $T'$ (defined in \cite{balancedlabellings}) by setting $T(i,j) = T'(i,w_j)$ for all $i<j$ such that $(i,j) \in \Inv(w)$. It turns out that this is in fact the composition of the bijection $\Theta$ above with the decoding rule from \cite[Proof of theorem 4.3]{balancedlabellings}. This together with the proof of Theorem \ref{thrm:schub formula} further implies a direct bijection from pipe dreams to balanced labellings. Namely, the pipe dream $P\in \RP(w)$ corresponds to the balanced labelling $T$ in which $T(i,w_j)=r_{ij}$ for all $(i,j) \in \Inv(w)$ with $i<j$.
\end{remark}

By removing condition (IT3) and only keeping the first two, we can also get Stanley symmetric functions. They were originally defined by Stanley in \cite{stanleysymmetric} as a sum of certain quasi-symmetric function. However they have numerous equivalent definitions, including a definition as the stable limit of Schubert polynomials which we state below.

For a permutation $w = w_1 \dots w_m$ in one line notation, let $1^m \times w = 12\dots n (w_1+n) (w_2+n) \dots (w_m+n)$.

\begin{definition}
    The \emph{Stanley symmetric function} for the permutation $w$ is
    \[ F_w(x_1, x_2, \dots ) = \lim_{n\to\infty} \mathfrak{S}_{1^n \times w} (x), \]
    where each side is treated as a formal power series and we take the limit of each coefficient.
\end{definition}

In terms of inversions tableaux, the $n$'th term of this  limit corresponds to taking the limit as the shape at the top of the tableaux stays the same while we add $n$ more empty rows underneath corresponding to the $1^n$ at the beginning of our permutation. Ignoring the empty rows at the bottom, this is the equivalent of taking an inversions tableaux for $w$ but where the row and column indices begin at $n+1$ and $n+2$ respectively. In the limit as $n$ goes to infinity, we have all tableaux of shape $w$ which are weakly balanced and have no repeat entries in a column, but whose entries no longer need to satisfy an upper bound. We call these unbounded inversions tableaux. These are the equivalent of \emph{semistandard inversion fillings} in \cite{Kelly.thesis}. 

\begin{definition}
    An \emph{unbounded inversions tableau} for a permutation $w$ is an inversions diagram of shape $w$ with a positive integer entry in each shaded box (and all unshaded boxes are assumed to be 0) which satisfy rules (IT1) and (IT2). We denote the set of unbounded inversions tableaux for $w$ as $\mathcal{UIT}(w)$.
\end{definition}

Theorem \ref{thrm:schub formula} then implies the following formula for Stanley symmetric functions.

\begin{corollary}
    The Stanley symmetric polynomial for the permutation $w$ is given by 
    $$F_w = \sum\limits_{T \in \mathcal{UTI}(w)} x^{wt(T)}.$$
\end{corollary}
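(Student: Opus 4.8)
The plan is to deduce this corollary directly from Theorem~\ref{thrm:schub formula} together with the stated definition of $F_w$ as the stable limit of Schubert polynomials, so that essentially all the work has been done already and only a careful bookkeeping argument remains. First I would fix $w \in S_m$ and, for each $n \geq 0$, apply Theorem~\ref{thrm:schub formula} to the permutation $1^n \times w$, which lives in $S_{n+m}$. Its inversions diagram is obtained from $T_w$ by placing $T_w$ in the ``upper right corner'' and adjoining $n$ empty rows (and the corresponding columns) below and to the left: since $1^n\times w$ fixes $1,\dots,n$ and sends $n+i \mapsto n+w_i$, a pair $(a,b)$ is an inversion of $1^n\times w$ iff $a=n+i$, $b=n+j$ with $(i,j)\in\Inv(w)$. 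Hence the shaded boxes of $T_{1^n\times w}$ are exactly the shaded boxes of $T_w$ shifted by $(n,n)$, and the new rows $1,\dots,n$ are entirely unshaded.

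Next I would unravel what conditions (IT1), (IT2), (IT3) say for a filling of this shifted shape. An unshaded row contributes nothing to weak balance beyond forcing zeros, so (IT1) and (IT2) on $T_{1^n\times w}$ are literally conditions (IT1), (IT2) on the shifted copy of $T_w$; and (IT3) becomes $T(n+i,n+i+1)\le n+i$, a bound that grows with $n$. Re-indexing the nonempty rows back to $1,\dots,m-1$, an element of $\IT(1^n\times w)$ is precisely a filling of the shape $w$ satisfying (IT1), (IT2), and the weakened bound ``every entry in row $i$ is $\le n+i$'' (using Lemma~\ref{lemma:weaker def} to replace (IT3) by (IT3')). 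The weight is unchanged under this re-indexing, so $\mathfrak{S}_{1^n\times w} = \sum_{T} x^{wt(T)}$ over exactly these bounded fillings.

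Then I would take $n\to\infty$ coefficient-wise. For a fixed monomial $x^\alpha$, the fillings of shape $w$ contributing $x^\alpha$ have entries bounded by $|\alpha|$ (indeed by $m-1+|\alpha|$ or so, since there are only $l(w)$ boxes), so for all $n$ large enough the bound ``row $i$ entries $\le n+i$'' is automatically satisfied and the coefficient of $x^\alpha$ in $\mathfrak{S}_{1^n\times w}$ stabilizes to the number of fillings of shape $w$ satisfying (IT1) and (IT2) alone with weight $\alpha$ — that is, to the coefficient of $x^\alpha$ in $\sum_{T\in\mathcal{UIT}(w)} x^{wt(T)}$. By the definition of $F_w$ as the coefficient-wise limit, this yields $F_w = \sum_{T\in\mathcal{UIT}(w)} x^{wt(T)}$, which is the claim (the corollary's ``$\mathcal{UTI}$'' being a typo for $\mathcal{UIT}$).

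The only genuinely delicate point is the passage to the limit: one must check that the number of unbounded inversions tableaux of shape $w$ with a given weight $\alpha$ is finite (clear, since the shape has only $l(w)$ boxes and each entry is at most $|\alpha|$ by the weight constraint) and that the bound in (IT3)/(IT3') eventually drops out for every such tableau, uniformly over the finitely many tableaux of that weight. Both are immediate once phrased correctly, so I expect no real obstacle — the main care is in verifying the shifted-shape description of $T_{1^n\times w}$ and in being precise that weak balance and the no-repeats-in-a-column condition are genuinely insensitive to the adjoined empty rows.
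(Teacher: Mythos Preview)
Your proposal is correct and follows exactly the approach the paper takes: the paper does not give a formal proof of this corollary but instead sketches precisely your argument in the paragraph preceding it (shift $T_w$ up by $n$ empty rows, note that (IT1) and (IT2) are unaffected while the bound in (IT3)/(IT3') becomes vacuous in the limit), declaring that ``Theorem~\ref{thrm:schub formula} then implies the following formula.'' Your write-up is in fact more careful than the paper's, particularly in verifying that the added empty rows do not create new rectangle-rule constraints and that the coefficient-wise limit stabilizes.
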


\section{Lehmer Code}
\label{section:lehmer code}

The Lehmer code of a permutation identifies the permutation and tells us useful information about Schubert polynomials -- it gives the maximal monomial in each Schubert polynomial and is key to showing that Schubert polynomials (over $S_{\infty}$) are a basis for the polynomial algebra $\mathbb{C}[x_1,x_2, \dots ]$. Inversions tableaux give a particularly nice way of viewing the Lehmer code and resulting impacts on the Schubert polynomial which we cover in this section.

\begin{definition}
    The \emph{Lehmer code} of a permutation $w$, written in one line notation, is $code(w) = (c_1, \dots, c_{n-1})$ where $c_i = \#\{ j > i | (i,j) \in \Inv(w) \}.$
\end{definition}
Later in this paper, we will also find it useful to think of $c_i$ as given by
$$c_i = \#\{n | n < w_i \text{ but } n \notin w_1, \dots ,w_{i-1}\}$$

Inversion diagrams provide an extremely convenient way to read of the Lehmer code -- $c_i$ is simply the number of shaded boxes in row $I$ of the inversions diagram $T_w$. Further, they provide a very nice interpretation for the lexicographically maximal monomial of a Schubert polynomial. 

The following result is well-known, e.g. it follows directly from \cite[Theorem 3.7]{bergeron1993rc}.

\begin{proposition}
  The lexicographically maximal monomial in the Schubert polynomial $\mathfrak{S}(w)$ is $x^{code(w)}$, and it has coefficient $1$.
\end{proposition}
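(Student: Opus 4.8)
The plan is to prove this using the inversions tableaux model from Theorem~\ref{thrm:schub formula}, which expresses $\mathfrak{S}_w$ as a positive sum of monomials $x^{wt(T)}$ over $T \in \IT(w)$. Since $wt(T) = (m_1, \dots, m_{n-1})$ where $m_i$ counts the boxes with entry $i$, it suffices to show two things: first, that there is a unique $T \in \IT(w)$ with $wt(T) = code(w)$, and second, that $wt(T') \leq code(w)$ lexicographically for every $T' \in \IT(w)$, with equality only for that one tableau. Recall that $code(w) = (c_1, \dots, c_{n-1})$ where $c_i$ is the number of shaded boxes in row $i$ of the inversions diagram $T_w$. The natural candidate is the tableau $T^{\max}$ in which every shaded box in row $i$ is filled with the entry $i$ itself; this has exactly $c_i$ boxes with entry $i$, hence $wt(T^{\max}) = code(w)$.

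First I would verify that $T^{\max}$ is a genuine inversions tableau, using the characterization from Lemma~\ref{lemma:weaker def} (conditions (IT1), (IT2), (IT3')). Condition (IT3') is immediate since every entry in row $i$ equals $i$. For (IT2), two shaded boxes $(i,k)$ and $(i',k)$ in the same column $k$ get entries $i$ and $i'$, which are distinct since $i \neq i'$. For (IT1), the Rectangle Rule, consider any $i < j < k$ with shaded opposite corners $(i,j)$ and $(j,k)$: their entries are $i$ and $j$ respectively, and the base box $(i,k)$ (which must be shaded, by Corollary~\ref{cor:inversion shape}) has entry $i$; since $i \leq i \leq j$, the condition $a \leq b \leq c$ holds with $a = T(i,k) = i$, $b = i$... more carefully, I need $T(i,k)$ weakly between $T(i,j)$ and $T(j,k)$, i.e. $i$ between $i$ and $j$, which holds. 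If $(i,j)$ is shaded but $(j,k)$ is not, then $T(i,j) = i$ and $T(j,k) = 0$, and $T(i,k) \in \{0, i\}$ depending on whether $(i,k)$ is shaded; either way the value lies weakly between $0$ and $i$. The remaining cases are similar. So $T^{\max} \in \IT(w)$.

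Next I would establish the lexicographic maximality and uniqueness. Given any $T' \in \IT(w)$, by condition (IT3') the entry of a shaded box in row $i$ is at most $i$, so a box contributes to $m_t$ for some $t \leq i$. Counting: the number of shaded boxes in rows $\leq i$ is $\sum_{r \leq i} c_r$ (reading $code$ off the diagram), and all of these have entries $\leq i$, so $\sum_{t \leq i} m_t(T') \geq$... wait, I want the reverse. The boxes with entry $\geq i+1$ all lie in rows $\geq i+1$; but more usefully, boxes with entry exactly $i$ lie in rows $\geq i$, and by (IT2) a column contains at most one box with entry $i$. The cleanest route: show by downward induction on the first coordinate where $wt(T')$ could exceed $code(w)$ that in fact $m_{n-1}(T') \leq c_{n-1}$, and if equality holds the top entries are forced, then peel off. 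Concretely, $m_{n-1}$ counts boxes with entry $n-1$; these occur only in row $n-1$, which has $c_{n-1}$ boxes, so $m_{n-1}(T') \leq c_{n-1}$ with equality iff every box in row $n-1$ has entry $n-1$. I would then argue inductively (removing row $n-1$ / the relevant constraints) that $wt(T')$ is lexicographically maximized precisely by filling greedily from the top, yielding $T^{\max}$. The main obstacle I anticipate is the bookkeeping in this induction: showing that once the top rows are filled to their maximum, the Rectangle Rule forces the entry-$i$ boxes in row $i$ to be exactly the shaded boxes of row $i$ and nothing lower—this is where Lemma~\ref{lemma:balanced condition} and condition (IT2) must be combined carefully, essentially recovering a "greedy is optimal and unique" argument for the lexicographic order.
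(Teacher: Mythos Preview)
Your proposal is correct and follows the same overall strategy as the paper: construct the tableau $T^{\max}$ that fills every shaded box in row $i$ with the value $i$, verify it lies in $\IT(w)$, and argue it is the unique tableau of lexicographically maximal weight.

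The one place you diverge is the maximality argument, and there you are making it harder than it needs to be. The paper dispatches this in one line by observing that rule (IT3') \emph{alone} already forces $T^{\max}$ to be the unique lex-max filling: since every shaded box in row $i$ has entry at most $i$, setting each entry equal to its row index saturates all the bounds simultaneously. Your downward induction on the index is exactly the unpacking of this observation --- if rows $k+1,\dots,n-1$ are filled with their row numbers, then by (IT3') no box outside row $k$ can carry the value $k$, so $m_k\le c_k$ with equality forcing row $k$ to be filled with $k$'s --- and it goes through without ever invoking the Rectangle Rule or (IT2). The ``main obstacle'' you anticipate (needing Lemma~\ref{lemma:balanced condition} and (IT2) to control where the value $i$ can appear) simply does not arise; those conditions are only used to check that $T^{\max}$ is a legitimate inversions tableau, not to establish maximality or uniqueness among inversions tableaux.
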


Inversions tableaux make the proof of this result particularly simple.

\begin{proof}
    For a permutation $w$, consider the inversions tableaux where we fill each shaded box with the row number of that box. This is the unique lexicographically highest filling we can have satisfying rule (IT3'). Since within a row all values are the same, but columns have different values, this always satisfies rules (IT1) and (IT2). The content is exactly $code(w)$.
\end{proof}

In Example \ref{example:inversionstableaux}, the leftmost inversions diagram and monomial are lexicographically maximal.
The simplest case we can have, which sheds intuition on the behavior of these objects, is when this maximal monomial is the only monomial in $\mathfrak{S}_w$. This will happen when $w$ is a dominant permutation. A $\emph{dominant}$ permutation is any permutation whose Lehmer code is weakly decreasing. Equivalently, a dominant permutation can be described in terms of pattern avoidance. We say a permutation avoids the pattern 132 if there is no $i<j<k$ such that $w_i < w_k < w_j$. A permutation is dominant if and only if it is 132 avoiding. We completely characterize the set of inversions tableaux for dominant permutations.
 

\begin{lemma}
\label{lemma:downward closed}
    An inversions diagram is \emph{downward closed} if given any shaded box in a column, all boxes below it are shaded as well. The set of dominant permutations is exactly the set of permutations with downward closed inversions diagrams.
\end{lemma}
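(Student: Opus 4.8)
The plan is to prove both inclusions directly using the combinatorial characterization of valid inversions diagrams from Corollary~\ref{cor:inversion shape}, together with the fact that the number of shaded boxes in row $i$ of $T_w$ equals $c_i$, the $i$th entry of the Lehmer code. Recall also that a permutation is dominant exactly when its Lehmer code is weakly decreasing, and (as noted in the excerpt) exactly when it is $132$-avoiding; I expect the cleanest argument to go through the code directly, with the pattern-avoidance description available as a sanity check.

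First I would show: if the inversions diagram of $w$ is downward closed, then $w$ is dominant. Suppose $T_w$ is downward closed. Within a fixed column $j$, the shaded boxes form an initial segment $\{(i,j) : i \ge i_0\}$ for some threshold (reading from the bottom row upward). I want to compare $c_i$ and $c_{i+1}$. Using the rectangle rule (Corollary~\ref{cor:inversion shape}): for any $j > i+1$, if $(i+1,j) \in \Inv(w)$ then, since $(i,i+1)$ and $(i+1,j)$ are the two opposite corners of a rectangle with base box $(i,j)$, and $(i,i+1)$ is shaded whenever column $i+1$ has any shaded box (by downward closure, since row $i$ is the bottom of column $i+1$), we get $(i,j) \in \Inv(w)$. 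One has to handle the boundary subtlety of whether $(i,i+1)$ is shaded, but downward closure makes this automatic: if $c_{i+1} > 0$ then $(i+1, \cdot)$ has a shaded box, and downward closure in that column forces $(i,i+1)$ shaded; if $c_{i+1}=0$ then trivially $c_i \ge c_{i+1}$. So every inversion in row $i+1$ (other than possibly $(i+1,i+2)$) produces a distinct inversion in row $i$, and together with $(i,i+1)$ itself this gives $c_i \ge c_{i+1}$. Hence the code is weakly decreasing and $w$ is dominant.

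Conversely, suppose $w$ is dominant, so $c_1 \ge c_2 \ge \cdots \ge c_{n-1}$; I must show $T_w$ is downward closed, i.e. if $(i,j) \in \Inv(w)$ then $(i',j) \in \Inv(w)$ for all $i' < i$. The key is the other half of Corollary~\ref{cor:inversion shape}: if $(i',i)$ and $(i',j)$ are both \emph{un}shaded then $(i,j)$ is unshaded. I would argue by showing that for dominant $w$, row $i$ has inversions exactly in columns $i+1, i+2, \dots, i+c_i$ (a prefix of the row) — this is the natural ``staircase'' shape of a dominant code. To see this, note the rectangle rule forces: if $(i,k)\in\Inv(w)$ with $k > i+1$, then for the rectangle with opposite corners $(i,k)$ and ... actually the cleanest route is: show by downward induction on $i$ that the shaded boxes in row $i$ are a left-justified prefix $\{(i,i+1),\dots,(i,i+c_i)\}$, using that row $i+1$ is already a prefix of length $c_{i+1} \le c_i$ and applying the rectangle rule to transport shadedness and non-shadedness between consecutive rows. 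Once each row is a prefix and the prefix lengths are weakly decreasing upward-to-downward... wait, they decrease going \emph{up}, i.e. $c_1 \ge c_2 \ge \cdots$, so lower rows have longer prefixes — that is precisely downward closure of each column. This completes the equivalence.

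The main obstacle is the converse direction, specifically establishing that a dominant code forces each row of the inversions diagram to be left-justified (a prefix). The rectangle rule gives implications in only one direction for shaded boxes and one for unshaded boxes, so I need to be careful to chain them correctly: I would use the non-shaded implication to push ``holes'' in row $i$ up into row $i+1$ (if $(i,k)$ is unshaded and $(i,k')$ with $i < k' < k$... ) — more precisely, to argue that a hole followed by a shaded box in the same row would, via a suitable rectangle with the row above together with the induction hypothesis that the row above is a prefix, contradict either the shaded or unshaded rule. Carrying out this case analysis cleanly — and correctly handling the base case $i = n-1$ (a single box, trivially a prefix) and the diagonal boxes $(i,i+1)$ — is where the real work lies; everything else is bookkeeping with the code.
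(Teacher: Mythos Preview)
Your forward direction (downward closed $\Rightarrow$ dominant) reaches the right conclusion but is overcomplicated and contains a confused step. You do not need the rectangle rule or any claim about the box $(i,i+1)$: downward closure in column $j$ already says that every shaded box $(i+1,j)$ in row $i+1$ forces the box $(i,j)$ in row $i$ of the same column to be shaded, so $c_i\ge c_{i+1}$ immediately. (Incidentally, your assertion that ``row $i$ is the bottom of column $i+1$'' is backwards in the paper's conventions: rows increase from bottom to top, so $(i,i+1)$ is the \emph{top} box of column $i+1$, and downward closure in that column tells you nothing about it.)

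The genuine gap is in your converse. Your key intermediate claim --- that for a dominant permutation each row's shaded boxes form the left-justified prefix $\{(i,i+1),\dots,(i,i+c_i)\}$ --- is false. Take $w=3421$, which has Lehmer code $(2,2,1)$ and is therefore dominant. Its inversion set is $\{(1,3),(1,4),(2,3),(2,4),(3,4)\}$; row $1$ is shaded in columns $3$ and $4$ but \emph{not} in column $2$, so it is not a left-justified prefix. (The diagram is nonetheless downward closed, as the lemma predicts.) Hence the induction you sketch cannot be carried out.

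The paper sidesteps all of this by using the $132$-avoiding characterization, which you mention but set aside. That route is essentially a one-liner: $w$ has a $132$ pattern at positions $i<j<k$ exactly when $w_i<w_k<w_j$, i.e.\ exactly when $(j,k)\in\Inv(w)$ but $(i,k)\notin\Inv(w)$, which is precisely a failure of downward closure in column $k$. Thus $132$-avoiding is equivalent to downward closed, with no case analysis or induction needed.
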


\begin{proof}
    We will use the 132 avoiding definition for dominant permutations. Suppose that a permutation $w$ contains a 132. This means we have some $i < j < k$ such that $w_i < w_k < w_j$. Of the three potential inversions $(i,j), \ (i,k)$ and $(j,k)$, this contains only the inversion $(j,k)$. Thus $w$ avoids having a 132 if and only if for every $(j,k) \in \Inv(w)$, we also have $(i,k) \in \Inv(w)$ for all $i < j < k$. In other words, in the Rectangle Rule, if the top right corner of the rectangle is shaded, the base box must be as well.
\end{proof}

\begin{example}
The inversions tableau for the dominant permutation 867435912.
\begin{eqnarray*}
\begin{ytableau}
\none & \none & \none & \none  & \none  & \none & \none & *(white) & \none[8] \\
\none & \none & \none & \none  & \none  & \none & *(green) 7 & *(white) & \none[7] \\
\none & \none & \none & \none  & \none  & *(white) & *(green)6 & *(white) & \none[6] \\
\none & \none & \none & \none  & *(green) 5 & *(green) 5 & *(green) 5 & *(white) & \none[5] \\
\none & \none & \none & *(white)  & *(green) 4 & *(green) 4 & *(green) 4 & *(white) & \none[4] \\
\none & \none & *(green) 3 & *(white)  & *(green) 3 & *(green) 3 & *(green) 3 & *(white) & \none[3] \\
\none & *(green) 2 & *(green) 2 & *(green) 2 & *(green) 2 & *(green) 2 & *(green) 2 & *(green) 2 & \none[2] \\
*(white) & *(green) 1 & *(green) 1 & *(green) 1 & *(green) 1 & *(green) 1 & *(green) 1 & *(green) 1 & \none[1] \\
\none[2] & \none[3] & \none[4] & \none[5]  & \none[6]  & \none[7] & \none[8] & \none[9] \\
\end{ytableau}
\end{eqnarray*}
\end{example}

The following well known formula (see 
\cite[P.6]{bergeron}) follows almost immediately.

\begin{corollary}
    For a dominant permutation $w$, the Schubert polynomial $\mathfrak{S}_w = x^{code(w)}$.
\end{corollary}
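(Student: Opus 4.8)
The plan is to prove that a dominant permutation $w$ has a \emph{unique} inversions tableau and then read off its weight via Theorem \ref{thrm:schub formula}. The candidate is the tableau $T^{\star}$ obtained by placing the row number $i$ in every shaded box of row $i$ of $T_w$; this is precisely the lexicographically maximal filling discussed above, and it was already observed that it satisfies (IT1), (IT2), and condition (IT3') (which is equivalent to (IT3) by Lemma \ref{lemma:weaker def}). So $T^{\star}\in\IT(w)$ for every $w$, and the content of the proof is the \emph{uniqueness} statement $\IT(w)=\{T^{\star}\}$ in the dominant case. The structural ingredient that makes this work is Lemma \ref{lemma:downward closed}: the inversions diagram of a dominant permutation is downward closed.

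Given an arbitrary $T\in\IT(w)$, I would show by induction on the row index $i$, working upward from $i=1$, that $T(i,j)=i$ for every shaded box $(i,j)$. For $i=1$, condition (IT3') forces $T(1,j)\le 1$, and since a shaded box always carries a positive entry, $T(1,j)=1$. For the inductive step, fix a shaded box $(i,j)$ with $i\ge 2$ and assume the claim for all shaded boxes in rows below $i$. By (IT3') we may set $t:=T(i,j)\le i$; suppose toward a contradiction that $t<i$. Since $t<i<j$, the box $(t,j)$ is a genuine staircase box lying below $(i,j)$ in column $j$, so downward closedness of $T_w$ forces $(t,j)$ to be shaded; as it sits in row $t<i$, the inductive hypothesis gives $T(t,j)=t$. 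But now $(t,j)$ and $(i,j)$ are two shaded boxes in column $j$ carrying the same entry $t$, contradicting (IT2). Hence $t=i$, the induction is complete, and $T=T^{\star}$.

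With $\IT(w)=\{T^{\star}\}$, Theorem \ref{thrm:schub formula} gives $\mathfrak{S}_w=x^{wt(T^{\star})}$. Since exactly the shaded boxes of row $i$ carry the entry $i$ in $T^{\star}$, the $i$-th coordinate of $wt(T^{\star})$ is the number of shaded boxes in row $i$ of $T_w$, which is $c_i$; therefore $x^{wt(T^{\star})}=x^{code(w)}$. I do not anticipate a real obstacle: the one non-mechanical step is recognizing that downward closedness is exactly the hypothesis needed to guarantee that the box $(t,j)$ exists and is shaded, so that the inductive hypothesis applies to it and collides with (IT2). Everything else — the base case, the extraction of the weight, and the identification of that weight with $code(w)$ — is routine bookkeeping.
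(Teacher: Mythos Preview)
Your proof is correct and follows essentially the same approach as the paper: both arguments use downward closedness (Lemma \ref{lemma:downward closed}) together with (IT3') and (IT2) to force $T(i,j)=i$ for every shaded box, establishing uniqueness of the inversions tableau and hence $\mathfrak{S}_w=x^{code(w)}$. Your write-up is simply more explicit about the induction on the row index than the paper's compressed one-line version.
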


\begin{proof}
    By rule (IT2) the number in each shaded box of an inversions tableaux $T$ for $w$ is at most the row number, and by (IT3) it must be exactly the row number so that all entries in the column are distinct. Thus $T$ is unique and $\text{content}(T)=code(w)$.
\end{proof}


Now, we provide a precise description of which shaded tableaux are inversion diagrams of a dominant permutation (and obtain the unique inversions tableaux for the permutation by putting an $i$ in all shaded boxes in the $i^{th}$ row).

\begin{proposition}
    Let $T$ be a staircase tableaux with some boxes shaded. Then $T$ is the inversions diagram of some dominant permutation if and only if for all boxes $(i,j) \in T$ such that $(i,j)$ is the top shaded box of its column, all the boxes in the rectangle with corners $(i,i+1), (i,j),(1,i+1), (1,j)$ are shaded.
\end{proposition}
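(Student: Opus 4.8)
The plan is to characterize dominant inversions diagrams entirely in terms of the column shapes, using the two facts already in hand: first, by Lemma~\ref{lemma:downward closed}, a dominant permutation is exactly one whose inversions diagram is downward closed, i.e.\ each column is an initial segment of boxes from the diagonal upward; and second, by Corollary~\ref{cor:inversion shape}, a shaded staircase tableau is a valid inversions diagram if and only if it satisfies the Rectangle Rule on shading (two opposite corners shaded forces the base box shaded; two opposite corners unshaded forces the base box unshaded). So a dominant inversions diagram is precisely a downward-closed shading that additionally obeys the Rectangle Rule. I would show that, \emph{given} downward closedness, the Rectangle Rule is equivalent to the stated "filled rectangle" condition on the top shaded box of each column.

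First I would set up notation: for column $j$, let $h_j$ be the highest row containing a shaded box in column $j$ (so by downward closedness the shaded boxes of column $j$ are exactly $(1,j),(2,j),\dots,(h_j,j)$), with $h_j=0$ if column $j$ is empty. Downward closedness already handles one half of the Rectangle Rule automatically: if the top-right corner $(j,k)$ of a rectangle (with $i<j<k$) is unshaded, then since the base box $(i,k)$ lies in a strictly lower row of the same column $k$ and... wait — actually I need the base box \emph{not} below the top-right corner, so let me be careful. In the rectangle of Figure~\ref{fig:balanced condition} with $i<j<k$, the base box is $(i,k)$ and the opposite corners are $(i,j)$ and $(j,k)$; both $(i,k)$ and $(j,k)$ sit in column $k$, with $(i,k)$ below $(j,k)$ since $i<j$. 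Thus if $(j,k)$ is shaded, downward closedness forces $(i,k)$ shaded — so the "both opposite corners shaded $\Rightarrow$ base box shaded" direction of the Rectangle Rule is implied whenever $(j,k)$ is shaded, regardless of $(i,j)$. Hence the only constraints the Rectangle Rule imposes beyond downward closedness are: (a) whenever $(i,j)$ and $(j,k)$ are both shaded, $(i,k)$ is shaded (already covered by (b)... no: covered since $(j,k)$ shaded forces it); and (b) whenever $(i,j)$ and $(j,k)$ are both unshaded, $(i,k)$ is unshaded. After sorting this out, I expect the genuine content reduces to a single implication: $(i,j)$ shaded and $(i,k)$ shaded (with $j<k$, $i<j$) should force $(j,k)$... no. Let me just say: I would carefully transcribe the Rectangle Rule into inequalities among the $h_j$'s, e.g.\ "$(i,j)$ shaded" $\iff i\le h_j$, and find that the rule becomes monotonicity-type constraints on the sequence $(h_j)$, and these in turn translate exactly to the assertion that for each $j$, the rectangle with corners $(1,h_j^{\mathrm{col}}+1),\dots$ — i.e.\ the stated rectangle — is fully shaded.

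Concretely, the key step: fix a column $j$ with top shaded box $(i_0, j)$, so $i_0 = h_j$. The claimed rectangle has row-range $1,\dots,i_0$ and column-range $i_0+1,\dots,j$. Saying every box in it is shaded means $h_k \ge i_0$ for every $i_0 < k \le j$. I would prove: downward closed + Rectangle Rule $\iff$ for all $j$, $h_k \ge h_j$ for all $i_0 < k \le j$ where $i_0 = h_j$. The forward direction: given a box $(a,k)$ in the rectangle (so $a\le i_0<k\le j$), apply the Rectangle Rule to the triple $a < k \le j$ with opposite corners $(a,k)$ and $(k,j)$ and base box $(a,j)$: since $(a,j)$ is shaded ($a\le i_0=h_j$) and we want $(a,k)$ shaded, I'd use the contrapositive of the "both unshaded $\Rightarrow$ base unshaded" form — if $(a,k)$ were unshaded then $(k,j)$ unshaded would be forced... this needs $(k,j)$ unshaded, which isn't given. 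So instead I should apply the rule to a triple where $(a,k)$ is the \emph{base} box: take $a < i_0 < k$ (assuming $a<i_0$; the case $a=i_0$ is the definition of $i_0=h_j$... no, $(i_0,k)$ for $k>i_0$ still needs checking). Use opposite corners $(a,i_0)$ and $(i_0,k)$, base box $(a,k)$: but I don't know $(i_0,k)$ is shaded — that's what I'm trying to prove. The cleaner route is to use the opposite corners $(a,k)$ and $(k,j)$? I think the honest statement is that the main obstacle is bookkeeping the three-way case analysis of which corner of the Rectangle Rule each box of the claimed rectangle plays, and I would organize it by downward induction on the column index $j$ (largest column first, where the condition is vacuous/automatic), using the inductive hypothesis on columns to the right together with the Rectangle Rule to propagate shadedness leftward and downward into the rectangle; the reverse direction (filled-rectangle condition for all columns $\Rightarrow$ Rectangle Rule holds) is a direct check splitting on whether the base box's column has its top shaded box above or below the relevant rows.

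The hard part will be the forward direction's case analysis — making sure that for \emph{every} box $(a,k)$ with $a \le h_j$ and $h_j < k \le j$ I can exhibit a valid instance of the Rectangle Rule (a triple $p<q<r$ among $\{a, \text{something}, k, j, h_j,\dots\}$) that forces $(a,k)$ shaded, given only that $(a,j)$ and $(h_j, j)$ are shaded and $(h_j+1,j)$ is not. I anticipate the right triple is $a < k < j$ hmm — wait, we need $h_j < k$ and $k \le j$, so $k \le j$; if $k<j$ use triple $(a,k,j)$ with base $(a,j)$: "both opposite corners $(a,k),(k,j)$ unshaded $\Rightarrow (a,j)$ unshaded" contrapositive gives: $(a,j)$ shaded $\Rightarrow$ $(a,k)$ or $(k,j)$ shaded. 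Not quite enough alone, but combined with the inductive hypothesis on column $k$ (which is $\ge$ index-wise to the right of... no, $k<j$, it's to the left) — this is where I'd need to be careful about induction direction. I'll resolve it by inducting on $j - a$ or on columns from the diagonal inward, and I expect roughly half a page of careful but routine casework; the conceptual content is simply "downward-closed valid diagram $=$ staircase shape whose column heights, read against the diagonal, trace out a Young-diagram-like region," which is exactly the rectangle condition stated.
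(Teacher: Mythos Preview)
Your setup is exactly right: the problem reduces to showing that, for a downward-closed shading, the Rectangle Rule of Corollary~\ref{cor:inversion shape} is equivalent to the stated filled-rectangle condition on top shaded boxes. But you talk yourself out of the correct one-step argument and propose an unnecessary induction.

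In the forward direction you consider the triple $a < k < j$ (with $a \le i_0 = h_j < k \le j$), base box $(a,j)$, opposite corners $(a,k)$ and $(k,j)$, and you correctly extract from the contrapositive of the Rectangle Rule that $(a,j)$ shaded $\Rightarrow$ $(a,k)$ or $(k,j)$ shaded. Then you write ``Not quite enough alone.'' But it \emph{is} enough: the box $(k,j)$ sits in column $j$ at row $k > i_0 = h_j$, so by the very definition of $h_j$ it is unshaded. Hence $(a,k)$ is forced to be shaded, and you are done --- no induction needed. (Indeed it suffices to run this argument only for $a = i_0$; downward closedness then fills in the rest of the rectangle below row $i_0$.) This is precisely the paper's proof: with $B = (i_0,j)$ the top shaded box of column $j$, any $A = (i_0,k)$ to its left in the same row forms a rectangle with $C = (k,j)$ strictly above $B$ in column $j$; since $B$ is shaded and $C$ is not, $A$ must be shaded.

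Your confusion seems to come from momentarily losing track of which box $(k,j)$ is in your own indexing: you wrote ``this needs $(k,j)$ unshaded, which isn't given,'' but it is given, exactly because $k$ exceeds the column height $h_j$. Once you see this, the half-page of casework and the induction on $j-a$ disappear; the forward direction is a single application of the Rectangle Rule followed by downward closedness, and the reverse direction is the direct check you already outlined.
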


\begin{proof}
Let $B=(i,j)$ be the uppermost shaded box in column $j$. Then for any box $A$ left of $B$ in row $i$, boxes $A$ and $B$ form a rectangle with some unshaded box $C$ in column $j$ above $B$ as shown in Figure \ref{fig:dominant tableaux rectangle}. Since $B$ is shaded but $C$ is not, Corollary \ref{cor:inversion shape} tells us that box $A$ must be shaded. By the downwards closed rule from Lemma \ref{lemma:downward closed}, the rest of the boxes below box $A$ must be shaded as well. Thus all boxes of the rectangle with corners $(i,i+1), B=(i,j),(1,i+1), (1,j)$ are shaded. By starting with $B$ and shading more boxes in this way, we have guaranteed that both downward closure and the requirements of Corollary \ref{cor:inversion shape} for being a valid inversion set are met, so all staircase tableaux with boxes shaded according to this rule will indeed be valid inversion sets for dominant permutations.
\end{proof}

\begin{figure}[h]
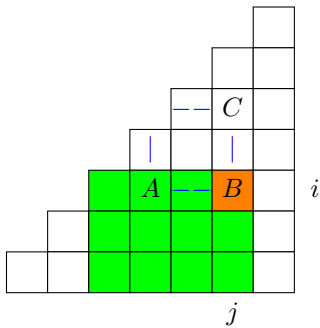

    \centering
    \ytableausetup{baseline}
\begin{ytableau}
\none & \none & \none  & \none  & \none & \none & *(white)\\
\none & \none & \none  & \none  & \none & *(white)  & *(white) \\
\none & \none & \none  & \none  & *(white) \textcolor{blue}{--} & *(white) C & *(white)\\
\none & \none & \none  & *(white) \textcolor{blue}{|} & *(white) & *(white) \textcolor{blue}{|} & *(white) \\
\none & \none & *(green)  & *(green) A & *(green) \textcolor{blue}{--} & *(orange) B & *(white) & \none[i]\\
\none & *(white) & *(green)  & *(green)  & *(green) & *(green) & *(white) \\
*(white) & *(white) & *(green)  & *(green)  & *(green) & *(green) & *(white) \\
\none & \none & \none & \none & \none & \none[j] & \none & \none\\
\end{ytableau}
    \caption{Given box $B=(i,j)$ as the uppermost shaded box in column $j$, all the green boxes in the green rectangle must also be shaded to be a valid inversions tableau for a dominant permutation.}
    \label{fig:dominant tableaux rectangle}
\end{figure}

For any permutation, we can also nicely describe the lexicographically minimal polynomial in $\mathfrak{S}_w$ using inversions tableaux. For this we will need a slight variation on the usual Lehmer code. 

\begin{definition}
    The \emph{column-Lehmer code} of $w$ is $ccode(w) = (c_1, \dots, c_n)$ where 
    $$c_i =  \# \{j < i | (i,j) \in \Inv(w)\} = \text{ the number of shaded boxes in column $i$ of } T_w.$$ 
\end{definition}

In \cite[\S 3]{bergeron1993rc} 
 Bergeron and Billey give a description of the lexicographically minimal pipe dream (called an RC-graph in the paper) for a permutation. Its definition, together with their Theorem 3.7, implicitly results in the following proposition. We can quickly give a direct proof of the formula using inversions tableaux.

\begin{proposition}
    For a permutation $w \in S_n$, let $ccode(w) = (c_1, \dots ,c_n)$. Then the lexicographically minimal monomial in $\mathfrak{S}_w$ is $\prod\limits_{i=1}^n x_i^{\#\{j | c_j \geq i \}}$ and it has coefficient 1.
\end{proposition}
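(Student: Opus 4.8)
The plan is to exhibit an explicit inversions tableau achieving the claimed monomial and then argue it is the unique lexicographically minimal one. First I would reinterpret the target monomial: writing $ccode(w) = (c_1,\dots,c_n)$, the exponent of $x_i$ in $\prod_i x_i^{\#\{j \mid c_j \geq i\}}$ counts the columns whose shaded-box count is at least $i$; equivalently, if we stack the shaded boxes of each column of $T_w$ down toward the main diagonal, the number of boxes landing in row $i$ is exactly $\#\{j \mid c_j \geq i\}$. So the natural candidate tableau $T$ is: in each column $j$, take the $c_j$ shaded boxes and assign them the values $1, 2, \dots, c_j$ reading from the bottom shaded box upward (equivalently, fill so that column $j$ restricted to its shaded boxes reads $1,2,\dots,c_j$ from bottom to top). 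By construction $wt(T)$ is precisely the claimed exponent vector.

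Next I would verify that $T$ satisfies (IT1), (IT2), (IT3'). Condition (IT2) is immediate since each column's shaded entries are $1,\dots,c_j$, all distinct. For (IT3'), I need every entry in row $i$ to be at most $i$: a box $(i,j)$ receives value equal to one plus the number of shaded boxes strictly below it in column $j$; since all those boxes lie in rows $1,\dots,i-1$, its value is at most $i$. For (IT1), the Rectangle Rule, I would use Corollary \ref{cor:inversion shape} on the shading pattern of $T_w$: given $i<j<k$ with box $(i,k)$ shaded, I must show $T(i,k)$ is weakly between $T(i,j)$ and $T(j,k)$. The key observation is that $T(i,k)$ counts shaded boxes weakly below $(i,k)$ in column $k$, and I want to compare this with the analogous count $T(j,k)$ in the same column $k$ but at the lower box $(j,k)$ — since $i < j$, box $(i,k)$ is above box $(j,k)$, so if both are shaded then $T(i,k) \geq T(j,k)$, with the boxes strictly between them in column $k$ contributing the gap; and I would control $T(i,j)$ using the shading implications of Corollary \ref{cor:inversion shape} applied to the relevant subrectangles. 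The case analysis (which of $(i,j),(i,k),(j,k)$ are shaded, bearing in mind the corollary's constraint that shading of two opposite corners forces the base box) is the technical heart.

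Finally, for lexicographic minimality and coefficient $1$: I would argue that any $T' \in \IT(w)$ has $wt(T') \succeq wt(T)$ lexicographically, and equality forces $T' = T$. The idea is that reading weight left to right means we want as few $1$'s as possible, then as few $2$'s, etc. By (IT2), each column contributes at most one $1$, so the number of $1$'s in any inversions tableau is at least... wait — actually we want a lower bound: each nonempty column (i.e. with $c_j \geq 1$) must contain at least one value, and I claim by weak-balancedness plus (IT2) that pushing values as high as possible is constrained so that the number of $1$'s cannot drop below $\#\{j \mid c_j \geq 1\}$; more carefully, I would show by downward induction on the value $v$ that the number of entries $\leq v$ is at least $\sum_{i \le v}\#\{j \mid c_j \ge i\}$, using that in each column the entries form a strictly increasing sequence (by (IT2)) bounded by (IT3'), hence column $j$ with $c_j$ shaded boxes must use $c_j$ distinct positive values, and these are minimized (in the dominance/lex sense) exactly when they are $1,\dots,c_j$. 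The main obstacle I anticipate is making the Rectangle Rule verification clean — the interplay between "value in a box = count of shaded boxes weakly below it in its column" and the rectangle inequality requires carefully tracking how Corollary \ref{cor:inversion shape} constrains shading in the three-corner configuration, and handling the degenerate case $j = i+1$ where the top-right corner sits against the diagonal.
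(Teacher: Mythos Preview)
Your overall plan matches the paper's: build the candidate $T$ by filling each column's shaded boxes with $1,2,\dots,c_j$ from the bottom up, verify the inversions-tableau axioms, and argue extremality column-by-column via (IT2). Two remarks, one minor and one substantive.

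Minor: in the paper's conventions row labels increase from bottom to top, so for $i<j$ the box $(i,k)$ sits \emph{below} $(j,k)$, not above. With that corrected, your planned case analysis on the shading of $(i,j),(i,k),(j,k)$ does go through; it is essentially the same argument as the paper's, which just organizes it by first comparing $T(i,j)$ with $T(i,k)$, then locating an auxiliary row $i'<i$ where columns $j$ and $k$ differ in shading and applying Corollary~\ref{cor:inversion shape} to the triple $i'<j<k$ to determine whether $(j,k)$ is shaded.

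Substantive: your argument for coefficient $1$ has a genuine gap. You write that ``in each column the entries form a strictly increasing sequence (by (IT2))'', but (IT2) only says the shaded entries in a column are \emph{distinct}, not that they increase from bottom to top. Your multiset reasoning does correctly force any $T'$ with $wt(T')=wt(T)$ to use exactly the values $\{1,\dots,c_j\}$ in column $j$, but it does \emph{not} pin down their arrangement; a priori any of the $c_j!$ orderings of those values along the column could occur. Ruling out the non-increasing arrangements requires (IT1), not (IT2), and this is precisely the extra work the paper does: assume some column has its entries permuted out of bottom-to-top increasing order, take the leftmost such column $k$ and a lowest out-of-order pair $(i,k),(j,k)$ in it; then the Rectangle Rule, combined with the fact that every column to the left of $k$ is still correctly ordered, forces a repeated value in column $k$, contradicting (IT2). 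Without this step your proof of uniqueness (and hence of coefficient $1$) is incomplete. Your confusion over which direction the lex inequalities go (``wait --- actually we want a lower bound'') is a symptom of the same issue: once you drop the false premise that columns are automatically increasing, the extremality and uniqueness arguments separate, and only the first follows from (IT2) alone.
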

    
 This minimal term corresponds to the inversions tableaux where we fill the shaded boxes in each column from the bottom to the top with the smallest number that has not yet been used in that column. For an example, see the rightmost inversions tableaux in Example \ref{example:inversionstableaux}.

\begin{proof}
    Let $T$ be a tableaux as above. By (IT2) we cannot get anything lexicographically lower in any column, so it remains to show that such $T$ satisfies (IT1) making it a valid inversions tableaux. Consider any triple of boxes $\textcolor{lightgray}{(j,k)}, \textcolor{green}{(i,k)} , \textcolor{Green}{(i,j)}$ with entries $x,y,z$ respectively. If $y=z$, these boxes automatically satisfy the rectangle rule. If $z>y$, there is some row $i'$ below $i$ such that $\textcolor{orange}{(i',j)}$ is shaded while $\textcolor{yellow}{(i',k)}$ is unshaded. Then by the Rectangle Rule, $\textcolor{lightgray}{(j,k)}$ is unshaded too, so $z>y>x=0$. If instead $z<y$, then by a similar argument $\textcolor{lightgray}{(j,k)}$ is shaded so $z<y<x$, meaning the Rectangle Rule is always upheld on the three boxes in question.

\begin{figure}[h]
    \centering
    \begin{tikzpicture}
\draw (0, 0.5) -- (0,1) -- (0.5,1) -- (0.5,1.5) -- (1,1.5) -- (1, 2) -- (1.5, 2) -- (1.5, 2.5) -- (2, 2.5) -- (2, 3)  -- (2.5, 3) -- (2.5, 3.5) -- (3, 3.5) ;
\draw[fill=gray!30]   (2,2.5) rectangle (2.5,2) node[pos=.5] {$x$};
\draw[fill=Green]   (1,1) rectangle (1.5,.5) node[pos=.5] {$z$};
\draw[fill=green]   (2,1) rectangle (2.5,.5) node[pos=.5] {$y$};
\draw[fill=orange]   (1,0) rectangle (1.5,-.5);
\draw[fill=yellow]   (2,0) rectangle (2.5,-.5);
\draw[dashed, blue] (1.25, 1) -- (1.25, 2.25) -- (2, 2.25);
\draw[dashed, blue] (2.25, 1) -- (2.25, 2);
\draw[dashed, blue] (1.25, 0.5) -- (1.25, 0);
\draw[dashed, blue] (2.25, 0.5) -- (2.25, 0);
\draw[dashed, blue] (1.5, .75) -- (2, .75);
\draw[dashed, blue] (1.5, -0.25) -- (2, -0.25);
\path (3.5,2.25) node  {$j$}
        (3.5,.75) node   {$i$}
        (3.5,-0.25) node {$i'$} 
        (1.25, -1) node {$j$}
        (2.25, -1) node {$k$};
\end{tikzpicture}
    \caption{The rectangle comparing boxes $\textcolor{lightgray}{(j,k)}, \textcolor{green}{(i,k)} , \textcolor{Green}{(i,j)}$ with entries $x,y,z$ respectively. For a row $i'$ below, boxes $\textcolor{orange}{(i',j)}$ and $\textcolor{yellow}{(i',k)}$ are also related to $\textcolor{lightgray}{(j,k)}$ by the Rectangle Rule.}
    \label{fig:double rectangle}
\end{figure}

To show the coefficient is 1, we prove there are no other permutations of elements within any columns of $T$ that still produce a valid inversions tableaux. Assume there is at least one column with entries out of order. Let $k$ be the leftmost such column, and $\textcolor{lightgray}{(j,k)}, \textcolor{green}{(i,k)}$ (still with entries $x,y$ respectively) be the lowest boxes in $k$ with entries out of order (i.e $x<y$). Then in Figure \ref{fig:double rectangle}, we have $z \geq y > x$. Since column $j$ is still in order, there must be some row $i'$ below $i$ such that $T(\textcolor{orange}{i',j}) = x$. But then we must have $T(\textcolor{orange}{i',j}) = T(\textcolor{yellow}{i',k}) = T(\textcolor{lightgray}{j,k}) = x$ to satisfy the Rectangle Rule, which violates (IT2).
\end{proof}

\begin{remark}
   For all non-dominant permutations, the lexicographically maximal and minimal inversions tableaux we have described are different, so these two constructions together prove that $\mathfrak{S}_{w}$ is a monomial if and \emph{only if} $w$ is dominant.
\end{remark}

\section{Grassmannian and Inverse Grassmannian Case}
\label{section:Grassmannian}

Dominant permutations belong to a larger class called \emph{vexillary} permutations -- permutations which are 2143-avoiding. A permutation is vexillary if and only if its Schubert polynomial is a \emph{flagged Schur polynomial}. Lascoux and Schützenberger first defined flag Schur polynomials in \cite{LascouxS} in their study of Schubert polynomials, and Wachs provided an explicit expression for the flags in terms of the permutation \cite{Wachs}. Flag Schur polynomials are defined combinatorial as follows. 
Given a partition $\lambda = (\lambda_1 \geq \dots \geq \lambda_n \geq 0)$ and a weakly increasing sequence of positive integers $b=(b_1 \leq \dots \leq b_n)$, let $SSYT(\lambda,b)$ be the set of SSYT of shape $\lambda$ in which row $i$ has entries at most $b_i$. Then the corresponding flagged Schur polynomial is
$$s_{\lambda}(b) = \sum\limits_{T \in SSYT(\lambda, b)} x^{wt(T)}.$$
In this section, we discuss two types of vexillary permutations which have particularly nice inversions tableaux -- Grassmannian and inverse Grassmannian permutations. 

The Schubert polynomial $\mathfrak{S}_w$ is a Schur polynomial exactly when $w$ is a \emph{Grassmannian permutation} -- a permutation with exactly one descent. In particular, \emph{$k$-Grassmannian} permutations are those with descent in spot $k$ (i.e. $w_k>w_{k+1}$). The $k$-Grassmannian permutations in $S_n$ are in bijection with Young Diagrams which fit in a $k \times (n-k)$ rectangle. Specifically, the permutation $w = \textcolor{Green}{a_1 a_2 \cdots a_k} b_1 b_2 \cdots b_{n-k}$ corresponds to the Young diagram $\lambda_w = (a_k - k, \dots, a_2 -2, a_1-1)$.

\begin{proposition}
\cite[\S 2]{bergeron}
    Let $w$ be a $k$-Grassmannian permutation. Then $\mathfrak{S}_w(x)= s_{\lambda_w}(x_1, \dots , x_k).$
\end{proposition}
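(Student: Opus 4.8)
The plan is to combine the pipe-dream formula of Theorem~\ref{thrm:schub formula} (in the form given by Lemma~\ref{lemma:weaker def}, so that the constraint is (IT3$'$)) with a direct, weight-preserving bijection between $\IT(w)$ and the set of reverse semistandard Young tableaux of shape $\lambda_w$ with entries in $\{1,\dots,k\}$; the Proposition then follows from the classical fact that such tableaux enumerate $s_{\lambda_w}(x_1,\dots,x_k)$. The first step is to pin down the shape of the inversions diagram. Writing $w = a_1\cdots a_k\, b_1\cdots b_{n-k}$ with its unique descent at position $k$, the sequences $a_1<\cdots<a_k$ and $b_1<\cdots<b_{n-k}$ are both increasing, so any inversion $(i,j)$ satisfies $i\le k<j$; hence every shaded box of $T_w$ lies in rows $1,\dots,k$ and columns $k+1,\dots,n$. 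A short count gives that row $i$ contains $c_i = a_i-i$ shaded boxes (the $a_i-1$ values below $a_i$, minus the $i-1$ of them among $a_1,\dots,a_{i-1}$), these counts are weakly increasing in $i$, and the shaded boxes are left-justified in each row. Reflecting the staircase so that row $k$ sits on top, the shaded region is precisely the Young diagram $\lambda_w = (c_k,c_{k-1},\dots,c_1)$.

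Next I would determine exactly what (IT1), (IT2), (IT3$'$) impose on a filling of this region. The crucial structural point is that, for a triple $i<j<k'$, the boxes $(i,j)$ and $(j,k')$ can never both be shaded (the former forces $j\ge k+1$, the latter $j\le k$). Consequently, when one applies the Rectangle Rule of Lemma~\ref{lemma:balanced} to such a triple, all cases in which the relevant boxes are unshaded are automatically satisfied, and the only genuine constraints come from two shaded boxes sharing a row or a column: one checks that $T(i,k')\le T(i,j)$ whenever $j<k'$ and $(i,j),(i,k')$ are both shaded, and $T(i,k')\le T(j,k')$ whenever $i<j$ and $(i,k'),(j,k')$ are both shaded, the latter strengthening to a strict inequality by (IT2). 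In the reflected picture these say exactly that rows weakly decrease left-to-right and columns strictly decrease top-to-bottom, i.e.\ we have a reverse SSYT of shape $\lambda_w$. Finally, (IT3$'$) says each entry in row $i$ is at most $i$; since each column of the shaded region is a contiguous block of rows ending at row $k$ and its entries strictly increase from bottom to top, this is equivalent to requiring only that the entries of row $k$ be at most $k$, which in turn (again by column-strictness, row $k$ holding the column maxima) is equivalent to all entries lying in $\{1,\dots,k\}$. Reading any reverse SSYT of shape $\lambda_w$ with entries in $\{1,\dots,k\}$ back into the shaded boxes of $T_w$ satisfies (IT1), (IT2), (IT3$'$), so the correspondence is a bijection, and it is visibly weight-preserving (an entry $i$ is tallied identically on both sides, and no entry exceeds $k\le n-1$).

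To finish, Theorem~\ref{thrm:schub formula} and the bijection give $\mathfrak{S}_w = \sum_{T\in\IT(w)} x^{wt(T)} = \sum_{T} x^{wt(T)}$, the last sum being over reverse SSYT of shape $\lambda_w$ with entries in $\{1,\dots,k\}$; replacing each entry $a$ by $k+1-a$ is a bijection from these to ordinary SSYT of shape $\lambda_w$ with entries in $\{1,\dots,k\}$, which reverses the order of the weight vector, so by symmetry of Schur polynomials the generating function equals $s_{\lambda_w}(x_1,\dots,x_k)$. I expect the main obstacle to be the second paragraph: verifying that, on the Grassmannian shaded region, the Rectangle Rule together with (IT2) and (IT3$'$) is \emph{exactly} reverse-semistandardness with entries bounded by $k$ — in particular handling every case of the Rectangle Rule when one or two of the three boxes is unshaded, and isolating the observation that (IT3$'$) collapses to a single bound on the top row. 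The shape computation and the reverse-SSYT/Schur identity are routine.
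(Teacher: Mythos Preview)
The paper does not give its own proof of this Proposition; it is quoted as a known result from \cite{bergeron}. What the paper \emph{does} prove is the closely related Proposition~\ref{prop:reverse SSYT}, and your argument is essentially that proof combined with Theorem~\ref{thrm:schub formula} and the standard reverse-SSYT description of Schur polynomials. In that sense your approach matches the paper's: the paper's proof of Proposition~\ref{prop:reverse SSYT} observes that for any two shaded boxes in the same row or column, the third corner of the relevant rectangle lies outside the $k\times(n-k)$ block and is hence unshaded, so the Rectangle Rule forces weak decrease; (IT2) gives column-strictness; and the bound by $k$ comes from (IT3) at the single corner box $(k,k+1)$. The only cosmetic difference is that you work with (IT3$'$) and argue the collapse to a single row-$k$ bound via column-strictness, whereas the paper applies (IT3) directly at $(k,k+1)$; these are equivalent by Lemma~\ref{lemma:weaker def}. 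Your proposal is correct, and in effect supplies the independent proof of the Proposition that the paper only implicitly contains via Theorem~\ref{thrm:schub formula} and Proposition~\ref{prop:reverse SSYT}.
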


One would hope then, that given $w$ as above, there would be a nice correspondence between $\IT(w)$ and SSYT of shape $\lambda_w$. Indeed we will prove that the shaded squares of tableaux in $\IT(w)$ give exactly reverse SSYT of shape $\lambda(w)$. These are Young diagrams which (weakly) decrease as opposed to increase along rows and columns, but likewise give a generating function formula for Schur polynomials.

\begin{proposition}
\label{prop:reverse SSYT}
Given a $k$-Grassmannian permutation $w\in S_n$, the set $\IT(w)$ consists of all tableaux whose shaded boxes form a reverse SSYT of shape $\lambda_w$ with top left corner in position $(k,k+1)$, and whose entries are in $1, \dots, k$. 
\end{proposition}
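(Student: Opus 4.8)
The plan is to identify the set of shaded boxes in $T_w$ for a $k$-Grassmannian permutation $w$, show it is exactly a (reversed) Young-diagram shape $\lambda_w$ anchored at $(k,k+1)$, and then argue that the weakly balanced condition (IT1) together with (IT2) and (IT3$'$) forces the entries to be exactly a reverse SSYT — and conversely that every such reverse SSYT yields a valid inversions tableau. First I would describe the inversions diagram: if $w = a_1 \cdots a_k b_1 \cdots b_{n-k}$ with $a_1 < \cdots < a_k$ and $b_1 < \cdots < b_{n-k}$, then $(i,j)\in\Inv(w)$ can only happen when $i \le k < j$ (no inversions inside the increasing block $a$ or inside the increasing block $b$). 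Using the characterization $c_i = \#\{n : n < w_i,\ n \notin w_1,\dots,w_{i-1}\}$ from Section~\ref{section:lehmer code}, or directly, one checks that row $i$ (for $i \le k$) has exactly $a_i - i$ shaded boxes, and these occupy the columns $k+1, k+2, \ldots, k + (a_i - i)$ — i.e.\ shading is \emph{left-justified} starting at column $k+1$. Since $a_1 - 1 \ge a_2 - 2 \ge \cdots \ge a_k - k$ (this is just $code(w)$ being the reverse of a partition), the row lengths are weakly increasing as $i$ decreases from $k$ to $1$, so the shaded region, with rows read from top ($i=k$) to bottom ($i=1$) and columns from left ($k+1$) rightward, is exactly a Young diagram of shape $\lambda_w = (a_k - k, \dots, a_2 - 2, a_1 - 1)$ with its northwest corner at box $(k, k+1)$. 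This matches the ``downward-closed in columns, left-justified in rows'' picture; Lemma~\ref{lemma:downward closed} and Corollary~\ref{cor:inversion shape} confirm this is a valid inversions diagram.

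Next I would translate the tableau conditions onto this region. Passing to the reverse-SSYT convention means: ``weakly increasing along rows/columns toward the northwest of $\lambda_w$'' becomes ``weakly decreasing'' in the matrix orientation, and ``strictly'' becomes the column-distinctness of (IT2). Concretely, within a single row $i$ of the staircase the relevant hooks only involve boxes in that row and boxes directly above in the same column; I would invoke Lemma~\ref{lemma:balanced} (the Rectangle Rule) to show: (a) along each row the entries are weakly monotone, and (b) down each column the entries are weakly monotone, with (IT2) upgrading the column monotonicity to strict. The key point is that for a Grassmannian shape the Rectangle Rule triples $(i,j),(j,k),(i,k)$ with $i<j<k$ degenerate: since all shaded boxes lie in columns $> k$ and rows $\le k$, the ``middle index'' $j$ satisfying $i<j<k$ with all three boxes shaded forces $j \le k$ for the box $(i,j)$ but $j > k$ for the box $(j,k)$, which is impossible — so the only genuine constraints come from comparing a box with the box immediately above it in its column (via rectangles whose fourth corner sits just off the diagonal), and these give exactly the row/column monotonicity of a reverse SSYT. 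The bound (IT3$'$), that every entry in row $i$ is $\le i$, becomes exactly the condition that the entry in row $i$ of the reverse SSYT (whose rows sit in staircase-rows $k, k-1, \dots$) is at most $k$ — since the topmost shaded row is row $k$ and monotonicity down columns (strict) plus the staircase row bounds force everything into $\{1,\dots,k\}$, and conversely entries in $\{1,\dots,k\}$ respect (IT3$'$) because staircase row $i \le k$.

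Finally I would assemble the bijection: given $T\in\IT(w)$, restrict to shaded boxes and re-read as a filling of $\lambda_w$; the above shows it is a reverse SSYT with entries in $\{1,\dots,k\}$. Conversely, given such a reverse SSYT, place it on the shaded region and fill unshaded boxes with $0$; I must check (IT1), (IT2), (IT3). (IT2) and (IT3$'$) (hence (IT3), via Lemma~\ref{lemma:weaker def}) are immediate from the reverse-SSYT conditions and the entry bound. For (IT1) I would verify the Rectangle Rule for every triple: by the degeneracy observation above the only nontrivial triples compare a shaded box to the shaded box above it in the same column together with a third box that is either shaded (and then the row-weak-monotonicity of the reverse SSYT gives the inequality) or unshaded, i.e.\ has entry $0$ (and then one needs that the entries decrease to $0$ consistently as one exits the shape — which holds because the shape is left-justified/column-closed, so exiting the shape happens monotonically and the $0$ sits at the small end). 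The main obstacle I anticipate is precisely this last verification of (IT1) at the \emph{boundary} of $\lambda_w$ — making rigorous that the ``$0$ in an unshaded box'' never sits strictly between two positive entries in a Rectangle-Rule triple. The cleanest way around it is to appeal to Lemma~\ref{lemma:balanced}(3): extend the reverse SSYT to an honest balanced tableau by choosing tie-breaks so that, reading the shape $\lambda_w$ in a standard (e.g.\ column-reading) order gives a linear extension, place those labels before the unshaded boxes, and check this is a saturated chain through $T_w$ in $\ID{n}$ — which it is, since adding boxes of $\lambda_w$ in a shape-compatible order stays inside the downward-closed, left-justified family of valid Grassmannian inversions diagrams. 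That reduces (IT1) to a statement about chains in weak Bruhat order, sidestepping the delicate boundary case entirely.
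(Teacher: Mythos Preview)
Your forward direction is exactly the paper's argument: in any Rectangle-Rule triple with two shaded boxes in the same row or column of the shape, the third corner lies outside the $k\times(n-k)$ block and hence has entry $0$, so (IT1) reduces to $T(A)\ge T(B)\ge 0$; then (IT2) gives strict columns and (IT3) gives the bound $k$ at the top-left corner. The paper says precisely this, tersely, and does not separately treat the converse.

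For the converse your approach (a) already works; the boundary concern is a non-issue. The only way an unshaded box could violate the Rectangle Rule is as a base box $(i,l)$ sitting (with entry $0$) strictly between two shaded opposite corners, but Corollary~\ref{cor:inversion shape} forbids that configuration in any valid inversions diagram: if both opposite corners are shaded then so is the base. Triples with at most one shaded box are then automatic (by the same corollary the lone shaded box, if any, must be a corner rather than the base, and $0$ lies weakly between $0$ and a positive entry). So once row and column monotonicity hold, every Rectangle-Rule triple is satisfied and (IT1) follows directly.

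Your approach (b) is unnecessary and, as written, misordered. Lemma~\ref{lemma:balanced}(3) requires $u\lessdot v\Rightarrow T(u)\le T(v)$, so the unshaded boxes (entry $0$) must come \emph{first} in the total order, not after the shaded ones; and among the shaded boxes the compatible order is dictated by the reverse-SSYT entries themselves (with tie-breaks), not by a shape-reading order of $\lambda_w$. A chain that adds the boxes of $\lambda_w$ first does not pass through $T_w$ in $\ID{n}$ in the sense required.
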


\begin{figure}[h]
    \centering
    \ytableausetup{baseline}
\begin{ytableau}
\none & \none & \none & \none  & \none  & \none & \none & *(white) & \none[8] \\
\none & \none & \none & \none  & \none  & \none & *(white) & *(white) & \none[7] \\
\none & \none & \none & \none  & \none  & *(white) & *(white) & *(white) & \none[6] \\
\none & \none & \none & \none  & *(white)  & *(white) & *(white) & *(white) & \none[5] \\
\none & \none & \none & *(green) 4 & *(green) 4  & *(green) 4 & *(green) 2 & *(green) 2 & \none[4] & \none[\textcolor{Green}{\lambda_1}]\\
\none & \none & *(white) & *(green) 3  & *(green) 3 & *(green) 1 & *(white) & *(white) & \none[3] & \none[\textcolor{Green}{\lambda_2}]\\
\none & *(white) & *(white) & *(green) 2 & *(green) 2 & *(white) & *(white) & *(white) & \none[2] & \none[\textcolor{Green}{\vdots}]\\
*(white) & *(white) & *(white) & *(green) 1 & *(green) 1 & *(white) & *(white) & *(white) & \none[1] & \none[\textcolor{Green}{\lambda_k}]\\
\none[2] & \none[3] & \none[4] & \none[5]  & \none[6]  & \none[7] & \none[8] & \none[9] \\
\end{ytableau}
    \caption{An inversions tableaux of shape $w = \textcolor{Green}{a_1 a_2 \cdots a_k} b_1 b_2 \cdots b_{n-k} = \textcolor{Green}{3469}12578$. Its shaded boxes are a reverse SSYT of shape $\lambda_w=(5,3,2,2).$}
    \label{fig:IT346912578}
\end{figure}

\begin{proof}
    The shape follows immediately from recording which positions in the permutation have values that are pairwise out of order. For a tableaux $T \in \IT(w)$, the reverse semi-standard filling of the numbers is implied as follows by our three rules. First, consider any shaded boxes $A, B$ in the same row (or column), with $A$ on the left (or top). By the Rectangle Rule, $B$ is the base box, and the third box to compare to must be out of our $k \times (n-k)$ rectangle and therefor not shaded. To satisfy (IT1), Lemma \ref{lemma:balanced} then requires that $T(A) \geq T(B)$. Rule (IT2) ensures that the columns are strictly decreasing to avoid repeat entries. Finally rule (IT3) guarantees all the entries are at most $k$, since in the upper left corner $T(k,k+1) \leq k$.
\end{proof}

Grassmannian permutations $u < w$ in weak Bruhat order if the inversions diagram for $w$ contains the inversions diagram for $u$. In other words, they have the same top corner (so the descent is in the same spot $k$ for some $k$) and $\lambda_u \subset \lambda_w$. What's more, the only possible inversions tableaux in the $k \times (n-k)$ rectangle these tableaux sit in are those whose shaded boxes form a Young Diagram with top left corner in box $(k, k+1)$ --- if any shaded box $(i,j)$ had an unshaded box $(i, j-1)$ immediately left (or $(i+1, j)$ immediately above) of it within the rectangle, then we would be including only the middle box of the order triple $(j-1, j), (i,j), (i,j-1)$ , resulting in an invalid permutation shape. Thus our inversions diagrams embody the correspondence between weak Bruhat order on $k$-Grassmannian permutations and Young's lattice on tableaux that fit within a $k \times (n-k)$ rectangle.

Based on this structure, we can further consider the case of skew inversions tableaux for Grassmannian permutations. For $u < w$ both $k$-Grassmanian permutations related in weak Bruhat order, we can consider the diagram of skew shape $w/u$ obtained by shading all boxes in $\Inv(w) \setminus \Inv(u)$. Let $\mathcal{IT}(w/u)$ be the set consisting of all the tableaux where we put a $k$ in all the boxes in $\Inv(u)$, a zero in all other unshaded boxes, and fill shaded boxes with numbers satisfying rules (IT1),(IT2),(IT3). Such tableaux correspond exactly to all the reverse SSYT of skew shape $\lambda_w/\lambda_u$ with numbers $1, \dots, k$. Note that if we used rule (IT3') instead, then we would no longer get all reverse SSYT fillings.

\begin{figure}[H]
    \centering
    \ytableausetup{baseline}
\begin{ytableau}
\none & \none & \none & \none  & \none  & \none & \none & *(white) & \none[8] \\
\none & \none & \none & \none  & \none  & \none & *(white) & *(white) & \none[7] \\
\none & \none & \none & \none  & \none  & *(white) & *(white) & *(white) & \none[6] \\
\none & \none & \none & \none  & *(white)  & *(white) & *(white) & *(white) & \none[5] \\
\none & \none & \none & *(orange) & *(orange)  & *(orange) & *(green) 4 & *(white) & \none[4] \\
\none & \none & *(white) & *(orange)  & *(orange)  & *(green) 3 & *(green) 2 & *(white) & \none[3] \\
\none & *(white) & *(white) & *(green) 4 & *(green) 1 & *(green) 1 & *(white) & *(white) & \none[2] \\
*(white) & *(white) & *(white) & *(green) 2 & *(white) & *(white) & *(white) & *(white) & \none[1] \\
\none[2] & \none[3] & \none[4] & \none[5]  & \none[6]  & \none[7] & \none[8] & \none[9] \\
\end{ytableau}
    \caption{An inversions tableaux of skew shape $(\textcolor{Green}{2578}13469)/(\textcolor{Green}{1257}34689)$. Orange boxes (shaded, unnumbered) are treated as containing 4, while unshaded boxes are still 0.}
    \label{fig:skew Grassmannian}
\end{figure}

For $k$-Grassmannian permutations $u,w$ related in weak Bruhat order, define the \emph{tableaux skew Schubert polynomial} $$\mathfrak{G}_{w/u}(x) = \sum\limits_{T\in \mathcal{IT}(w/u)}x^{wt(T)}.$$ 
It follows that $\mathfrak{G}_{w/u}(x) = s_{\lambda_w / \lambda_u }(x_1, \dots, x_k)$. Note this is not the same as the skew Schubert polynomials defined in \cite{lenartsottile}. It is known that $s_{\lambda/\mu} = \sum\limits_{\nu} c^{\lambda}_{\mu,\nu} s_\nu$ where $c^{\lambda}_{\mu,\nu}$ are the \emph{Littlewood--Richardson coefficients} defined to satisfy $s_\mu s_\nu = \sum\limits_\lambda c^{\lambda}_{\mu,\nu} s_\lambda$ \cite{EC2}. In fact, this expression of skew Schur polynomials in terms of Littlewood--Richardson coefficients plays a key role in many proofs for combinatorial objects to count these coefficients. It also follows from the definition of Littlewood--Richardson coefficients that for $w,u,v$ all $k$-Grassmanian permutations we have $c^w_{u,v} = c^{\lambda_w}_{\lambda_u,\lambda_v}$. Putting these together, we can express our tableaux skew Schubert polynomial in terms of Schubert Littlewood--Richardson coefficients! Namely,
$$\mathfrak{S}_{w/u} = \sum\limits_{v} c^{w}_{u,v} s_v.$$

\begin{question}
    Is there a nice way to extend the definition of skew inversions tableaux and tableaux skew Schubert polynomials to any pair of permutations $u,w$ related in weak Bruhat order? In particular, will such polynomials be Schubert positive or expressible in terms of Schubert Littlewood--Richardson coefficients?
\end{question}


Changing gears slightly, we next consider inversions tableaux for inverse Grassmannian permutations.

\begin{proposition}
\label{prop:inverse Grassmannian}
    Let $w = \textcolor{Green}{a_1 a_2 \cdots a_k} b_1 b_2 \cdots b_{n-k}$ be a $k$-Grassmannian permutation. Then the inversions diagram of shape $w^{-1}$ has boxes only in intersections of columns $a_1, \dots, a_k$ and rows $b_1, \dots, b_{n-k}$. Removing empty rows and columns and rotating 180 degrees, we get the bijection
    $\IT(w^{-1}) \xrightarrow{\sim} SSYT(\lambda=(k+i-b_i)_{i=1}^{n-k}, (b_1, \dots , b_{n-k}))$.
\end{proposition}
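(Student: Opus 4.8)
The plan is to first pin down the inversions diagram of $w^{-1}$ exactly, and then convert it, together with its admissible fillings, into flagged SSYT by a purely positional relabeling. For the first part I would use the standard involution $(i,j)\mapsto (w_j,w_i)$ between $\Inv(w)$ and $\Inv(w^{-1})$, combined with the description $\Inv(w)=\{(i,k+s)\colon i\le k<k+s,\ a_i>b_s\}$ valid for a $k$-Grassmannian $w$ (writing $w_i=a_i$ for $i\le k$ and $w_{k+s}=b_s$). This gives $\Inv(w^{-1})=\{(b_s,a_i)\colon a_i>b_s\}$, so every shaded box of $T_{w^{-1}}$ sits in a row $b_s$ and a column $a_i$, which is the first assertion. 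Since $\{a_1,\dots,a_k\}$ and $\{b_1,\dots,b_{n-k}\}$ partition $\{1,\dots,n\}$, one has $\#\{i\colon a_i\le b_j\}=b_j-j$; hence, in the compressed array (row $b_j$ becomes row $j$, column $a_l$ becomes column $l$), row $j$ consists of the rightmost $k-(b_j-j)=k+j-b_j$ of the $k$ columns. Thus the compressed shaded region is right-justified and shrinks as $j$ grows, and rotating $180^{\circ}$ turns it into the English-notation Young diagram $\lambda=(k+i-b_i)_{i=1}^{n-k}$; this is genuinely a partition because $\lambda_i-\lambda_{i+1}=b_{i+1}-b_i-1\ge 0$ and $0\le\lambda_i\le k$. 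Concretely, the composite sends box $(j,c)$ of $\lambda$ to box $(b_j,a_{k+1-c})$ of $T_{w^{-1}}$, and it clearly permutes box positions, hence preserves weight.

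Next I would translate the three defining conditions through this correspondence. Read via (IT3') from Lemma~\ref{lemma:weaker def}, condition (IT3) says every entry of (actual) row $b_j$ is at most $b_j$, i.e. row $j$ of $\lambda$ is bounded by $b_j$, which is exactly the flag $(b_1,\dots,b_{n-k})$. For two shaded boxes $(i,j),(i,k)$ in a common row, the Rectangle Rule triple has third box $(j,k)$ whose row $j$ is an $a$-value, hence unshaded with entry $0$; since $T(i,k)>0$ this forces $T(i,k)\le T(i,j)$, so rows of $T_{w^{-1}}$ weakly decrease left to right, which after the $180^{\circ}$ rotation is the weakly-increasing-rows condition of $\lambda$. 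Symmetrically, two shaded boxes in a common column have third box $(j,k)$ with both coordinates $b$-values, hence unshaded; together with (IT2) this makes columns of $T_{w^{-1}}$ strictly increase upward, i.e. strictly increasing columns of $\lambda$. So $T\in\IT(w^{-1})$ maps to an element of $SSYT(\lambda,(b_j))$.

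For the converse I would start from an arbitrary $S\in SSYT(\lambda,(b_j))$, define $T$ on $T_{w^{-1}}$ by the inverse relabeling and $0$ elsewhere, and verify (IT1), (IT2), (IT3'). Conditions (IT2) and (IT3') are immediate from the column-strictness and the flag of $S$, once one notes (again via $\#\{i\colon a_i\le b_j\}=b_j-j$ and the analogous column count) that the shaded boxes in each row and column of $T_{w^{-1}}$ form a contiguous block matching a row or column of $\lambda$. The crux is (IT1), the Rectangle Rule for \emph{every} triple $i<j<k$. The key observation that makes this manageable is that $(i,j)$ and $(j,k)$ cannot both be shaded, since that would place $j$ in both $\{a_l\}$ and $\{b_s\}$; and by Corollary~\ref{cor:inversion shape} the base box $(i,k)$ cannot be shaded while both opposite corners are unshaded. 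This leaves a short finite list of cases: if exactly $(i,j),(i,k)$ (resp.\ $(i,k),(j,k)$) are shaded we are inside a single row (resp.\ column) of the sub-rectangle, and the required inequality is the row-weakly-decreasing (resp.\ column-strictly-increasing) property of $S$ transported back, chaining through intermediate shaded cells if the two boxes are not adjacent; in every remaining case the base box is $0$ while at most the opposite corners are positive, so ``$0$ lies weakly between'' holds trivially. Hence $T\in\IT(w^{-1})$, the two maps are mutually inverse and weight-preserving, which proves the claim. I expect the only real work to be the bookkeeping of the shape/flag identities in the first paragraph and the organization of the case check for (IT1); no genuine inequality manipulation is needed.
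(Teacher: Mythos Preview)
Your argument is correct and follows essentially the same approach as the paper's proof: identify $\Inv(w^{-1})=\{(b_s,a_i):a_i>b_s\}$, then for any two shaded boxes in a common row or column observe that the third box of the Rectangle Rule triple is forced to be unshaded (because its row or column index lands in the wrong half of the partition $\{a_l\}\sqcup\{b_s\}$), yielding the required monotonicity; finally compress and rotate to obtain the flagged SSYT. The paper records the shape as $\lambda_i=\#\{j:a_j>b_i\}$, which is your $k+i-b_i$.

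Your write-up is considerably more complete than the paper's: the paper gives only the forward direction and does not explicitly invoke (IT2) for column strictness or (IT3') for the flag bound, nor does it verify that an arbitrary flagged SSYT pulls back to a tableau satisfying (IT1) for \emph{every} triple $i<j<k$. Your case analysis for the converse---noting that $(i,j)$ and $(j,k)$ cannot both be shaded and using Corollary~\ref{cor:inversion shape} to rule out the base box being shaded with both corners unshaded---is exactly what is needed to close that gap, and the paper leaves it implicit. The explicit shape computation via $\#\{i:a_i\le b_j\}=b_j-j$ is also a nice addition.
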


\begin{figure}[h]
    \centering
    \ytableausetup{aligntableaux=center}
\begin{ytableau}
 \none & \none & \none  & \none  & \none & \none & *(white) & \none[7] \\
\none & \none & \none  & \none  & \none & *(green) 5 & *(white) & \none[6] \\
\none & \none & \none  & \none  & *(white) & *(white) & *(white) & \none[5] \\
\none & \none & \none  & *(white)  & *(white) & *(white) & *(white) & \none[4] & \none & \none[\longrightarrow] & \none \\
\none & \none & *(green) 3 & *(green) 3  & *(white) & *(green) 2 & *(white) & \none[3] \\
\none & *(white) & *(white)  & *(white)  & *(white) & *(white) & *(white) & \none[2] \\
*(green) 1 & *(white) & *(green) 1 & *(green) 1 & *(white) & *(green) 1 & *(white) & \none[1] \\
\none[2] & \none[3] & \none[4] & \none[5]  & \none[6]  & \none[7] & \none[8] \\
\end{ytableau}
\begin{ytableau}
    *(green) 1 & *(green) 1 & *(green) 1 & *(green) 1 & \none[\ \ \leq 1] \\
    *(green) 2 & *(green) 3 & *(green) 3 & \none & \none[\ \ \leq 3]\\
    *(green) 5 & \none & \none & \none & \none[\ \ \leq 6] \\
\end{ytableau}
    \caption{An inversions tableaux for $(\textcolor{Green}{2457}1368)^{-1}$ and its corresponding flag SSYT.}
    \label{fig:inverse Grassmannian}
\end{figure}

\begin{proof}
For a $k$-Grassmannian permutation $w = \textcolor{Green}{a_1 a_2 \cdots a_k} b_1 b_2 \cdots b_{n-k}$, the inversion set $\Inv(w^{-1})=\{ (a_i,b_j) | i<j \text{ but } a_i > b_j \}$. In other words, it is pairs of numbers (rather than positions) which are out of order in $w$. This restricts shaded boxes to the intersection of columns and rows given in Proposition \ref{prop:inverse Grassmannian}. Consider any shaded boxes $A, B$ in the same column (or row) with $A$ on the top (or left). Then $B$ is the base box in the rectangle rule, and the third box will always be unshaded, so we must be (weakly) decreasing along rows and down columns. Sliding shaded boxes together and rotating by 180 degrees, we then get a flag SSYT. the shape $\lambda$ from the proposition comes from the fact that $\lambda_i = \{j | a_j > b_i\}$.
\end{proof}

\section{Chute Moves Poset}
\label{section:chute moves}

Our goal in this section is to establish connections between inversions tableaux and Lehmer tableaux on the one hand and Rubey's chute move posets on the other hand. These connections will be needed in a forthcoming article with Defant, Mularczyk, Nguyen Van, and Tung devoted to proving that chute move posets are lattices. 

\begin{definition}
\label{def: chute move}
    In a pipe dream, a \emph{chute move} $C_{ij}$ on pipes $i$ and $j$ (with $i<j$) between rows $a$ and $b$  is a change of the following type, where pipes $i$ and $j$ bound a rectangle which is filled with crosses on the inside.
    
\begin{center}
\scalebox{0.7}{
$
\begin{matrix}
\begin{tikzpicture}
    \crosses[blue][]{1/2,1/3,1/4}
    \crosses[][blue]{2/1,3/1}
    \crosses[red][]{4/2,4/3,4/4}
    \crosses[][red]{2/5,3/5}
    \crosses[blue][red]{1/5}
    \crosses{2/2,2/3,2/4,3/2,3/3,3/4}
    \bumps[][blue]{1/1}
    \bumps[blue][red]{4/1}
    \bumps[red][]{4/5}
     \node[right] at (5,-0.5) {\Large \text{row } a};
    \node[right] at (5,-3.5) {\Large \text{row } b};
     \node[left] at (0,-3.5) {\Large \textcolor{blue}{i}};
    \node[left] at (0.5,-4.2) {\Large \textcolor{red}{j}};
\end{tikzpicture}
&
\begin{tikzpicture}
  \draw[-{Latex[length=3mm]}]    (0,1)   -- (1.5,1) node [above, midway] {\Large $C_{ij}$};
  \node[] at (-0.5,-1.5) {};
  \node[] at (2.2,0) {};
\end{tikzpicture}
&
\begin{tikzpicture}
    \crosses[red][]{1/2,1/3,1/4}
    \crosses[][red]{2/1,3/1}
    \crosses[blue][]{4/2,4/3,4/4}
    \crosses[][blue]{2/5,3/5}
    \crosses[blue][red]{4/1}
    \crosses{2/2,2/3,2/4,3/2,3/3,3/4}
    \bumps[][red]{1/1}
    \bumps[red][blue]{1/5}
    \bumps[blue][]{4/5}
     \node[right] at (5,-0.5) {\Large \text{row } a};
    \node[right] at (5,-3.5) {\Large \text{row } b};
     \node[left] at (0,-3.5) {\Large \textcolor{blue}{i}};
    \node[left] at (0.5,-4.2) {\Large \textcolor{red}{j}};
\end{tikzpicture}
\end{matrix}
$
}
\end{center}
\end{definition}

Note that in \cite{billey1993some} this is what Bergeron and Billey refer to as a generalized chute move.
Observe that if such a chute move exists for the given pipes $i,j$, then the values of $a$ and $b$ are uniquely determined. We now define the corresponding action of the chute move $C_{ij}$ on inversions tableaux.


\begin{definition}
\label{Def:chute move}
Let $T$ and $T'$ be inversions tableaux for a permutation $w$, with $T(i,j)=a$ which satisfy the following:

    \begin{enumerate}
        \item $T'(i,j)=b$, where $b$ is the smallest number satisfying $b>a$ and $b \notin \{T(r,j) | r<i \}$.
        \item For some values of $k>j$ we have $T(i,k) = a = T'(j,k)$ and $T(j,k) = b = T'(i,k)$.
        \item For all other pairs of indices $(h,l)$ not listed above, we have $T(h,l) = T'(h,l)$.
    \end{enumerate}

Then we say $T' = C_{ij} T$, and we call $C_{ij}$ a \emph{chute move} on $T$.
\end{definition}

\begin{center}
$
\begin{matrix}
 \begin{tikzpicture}
\draw (0, 0.5) -- (0,1) -- (0.5,1) -- (0.5,1.5) -- (1,1.5) -- (1, 2) -- (1.5, 2) -- (1.5, 2.5) -- (2, 2.5) -- (2, 3)  -- (2.5, 3) -- (2.5, 3.5) -- (3, 3.5) -- (3,4) -- (3.5,4);
\draw[dashed, red] (1.25,0) -- (1.25, 2.25) -- (4.3, 2.25);
\draw[dashed, blue] (0.25,0) -- (0.25, 1.25) -- (4.3,1.25);
\draw[fill=green]   (1,1.5) rectangle (1.5,1) node[pos=.5] {$a$};
\draw[fill=gray!30]   (2,2.5) rectangle (2.5,2) node[pos=.5] {$b$};
\draw[fill=green]   (2,1.5) rectangle (2.5,1) node[pos=.5] {$a$};
\draw[fill=gray!30]   (3.5,2.5) rectangle (4,2) node[pos=.5] {$b$};
\draw[fill=green]   (3.5,1.5) rectangle (4,1) node[pos=.5] {$a$};
\path (4.5,2.25) node  {\textcolor{red}{$j$}}
        (4.5,1.25) node   {\textcolor{blue}{$i$}};
\draw[->]    (2.25,2)   -- (2.25,1.6);
\draw[->]    (3.75,2)   -- (3.75,1.6);
\end{tikzpicture}
&
\begin{tikzpicture}
  \draw[-{Latex[length=2mm]}]    (0,1)   -- (1,1) node [above, midway] {$C_{ij}$};
 \node[] at (0,-1) {};
\end{tikzpicture}
&
 \begin{tikzpicture}
\draw (0, 0.5) -- (0,1) -- (0.5,1) -- (0.5,1.5) -- (1,1.5) -- (1, 2) -- (1.5, 2) -- (1.5, 2.5) -- (2, 2.5) -- (2, 3)  -- (2.5, 3) -- (2.5, 3.5) -- (3, 3.5) -- (3,4) -- (3.5,4);
\draw[dashed, red] (1.25,0) -- (1.25, 2.25) -- (4.3, 2.25);
\draw[dashed, blue] (0.25,0) -- (0.25, 1.25) -- (4.3,1.25);
\draw[fill=gray!30]   (1,1.5) rectangle (1.5,1) node[pos=.5] {$b$};
\draw[fill=green]   (2,2.5) rectangle (2.5,2) node[pos=.5] {$a$};
\draw[fill=gray!30]   (2,1.5) rectangle (2.5,1) node[pos=.5] {$b$};
\draw[fill=green]   (3.5,2.5) rectangle (4,2) node[pos=.5] {$a$};
\draw[fill=gray!30]   (3.5,1.5) rectangle (4,1) node[pos=.5] {$b$};
\path (4.5,2.25) node  {\textcolor{red}{$j$}}
        (4.5,1.25) node   {\textcolor{blue}{$i$}};
        \draw[->]    (2.25,2)   -- (2.25,1.6);
\draw[->]    (3.75,2)   -- (3.75,1.6);
\end{tikzpicture}
\end{matrix}
$
\end{center}

We next show that chute moves on inversions tableaux correspond exactly to chute moves on pipe dreams. Recall from the proof of Theorem \ref{thrm:schub formula} that $\phi^{1}$ is the bijection from inversions tableaux to pipe dreams.

\begin{proposition}
    \label{Prop:chute move}
    Let $T$ be an inversions tableaux.
     If $C_{ij}$ is a chute move on $T$ then it will be on $\phi^{-1}(T)$ as well and vice versa. Further $\phi^{-1}(C_{ij}T)= C_{ij}(\phi^{-1}(T)).$
    
\end{proposition}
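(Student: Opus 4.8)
The plan is to unwind both definitions of "chute move" through the explicit bijection $\phi$ from the proof of Theorem~\ref{thrm:schub formula} and check that the combinatorial data match. Recall that $\phi(P)$ is the inversions tableau with $\phi(P)(i,j) = r_{ij}$, the row in which pipes $i$ and $j$ cross in $P$. So the entire statement is a translation exercise: a chute move $C_{ij}$ on pipe dreams moves a block of crossings among pipes $i<j$ from row $a$ down to row $b$, reindexing which crossings belong to pipe $i$ versus pipe $j$; a chute move $C_{ij}$ on inversions tableaux (Definition~\ref{Def:chute move}) changes the entry in box $(i,j)$ from $a$ to $b$ and swaps the values $a\leftrightarrow b$ in the pairs of boxes $(i,k),(j,k)$ for the relevant $k>j$. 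I would first set up notation: given the chute move $C_{ij}$ on $P = \phi^{-1}(T)$ between rows $a$ and $b$, identify precisely which crossings move and which pipes they involve before and after, then read off the corresponding changes to $r_{hl}$ for all pairs $(h,l)$.

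First I would verify the "$a,b$" bookkeeping. In $P$, box $(i,j)$ records $r_{ij}=a$, the row of the (unique, since $P$ is reduced) crossing of pipes $i$ and $j$; in the chute picture this is the top-left crossing of the rectangle, consistent with $T(i,j)=a$. After the chute move, pipes $i$ and $j$ cross in row $b$, so $\phi(C_{ij}P)(i,j)=b$; I must check $b$ is exactly "the smallest number $>a$ not in $\{T(r,j)\mid r<i\}$" as required by condition~(1) of Definition~\ref{Def:chute move}. This follows because the rectangle bounded by pipes $i,j$ being full of crosses and the reducedness constraint force $b$ to be the first row below $a$ where pipe $j$ is free to turn — the entries $T(r,j)$ for $r<i$ are exactly the rows above $a$ in column $j$ occupied by crossings of pipe $j$ with smaller-indexed pipes, and the "smallest $>a$ avoiding these" condition is precisely the combinatorial shadow of "the next available row for the $i$-$j$ crossing going down." I would also need the observation (already used in the proof of Theorem~\ref{thrm:schub formula}, via rule (IT2) and the structure of $\phi$) that the rows occupied in a given column by crossings are totally determined by the tableau entries.

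Next I would handle the columns $k>j$. The crossings inside the chute rectangle, together with the crossings where pipe $i$ (resp. $j$) continues to the right past column position, are exactly the crossings of pipes $i$ and $j$ with pipes $k$ for $k>j$ whose crossing rows lie in $\{a,\dots,b\}$; the chute move reflects this block, so each such $k$ contributes a pair where $r_{ik}$ and $r_{jk}$ get swapped — one of them equals $a$ (the top row of the block for that pipe) and the other equals $b$, matching condition~(2): $T(i,k)=a=T'(j,k)$ and $T(j,k)=b=T'(i,k)$. For every other pair of pipes $(h,l)$, the chute move leaves their unique crossing untouched, so $r_{hl}$ is unchanged, matching condition~(3). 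Putting these three checks together shows $\phi(C_{ij}P)$ satisfies Definition~\ref{Def:chute move} relative to $\phi(P)$, i.e. $\phi(C_{ij}P) = C_{ij}\phi(P)$, equivalently $\phi^{-1}(C_{ij}T) = C_{ij}\phi^{-1}(T)$. Finally, since both $\phi$ and $\phi^{-1}$ are bijections and chute moves are characterized entirely by these equalities of entries, the "if and only if" (a chute move $C_{ij}$ exists on $T$ exactly when it exists on $\phi^{-1}(T)$) follows: the existence of the full-rectangle configuration of crossings in $\phi^{-1}(T)$ is equivalent, under the translation above, to the existence of the pattern of entries (box $(i,j)=a$, matching $a/b$ pairs in columns $k>j$) required by Definition~\ref{Def:chute move} in $T$.

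The main obstacle will be the careful verification that the "smallest $b>a$ avoiding $\{T(r,j)\mid r<i\}$" rule on tableaux coincides exactly with where the chuted crossing lands in the pipe dream, and that the set of columns $k>j$ in condition~(2) is precisely the set of pipes crossing both $i$ and $j$ within the rectangle — this requires pinning down, using reducedness and rules (IT1)/(IT2), exactly which crossings occupy which rows in the relevant columns, i.e. that the pipe dream near the chute rectangle is rigidly determined by the tableau. This is the same kind of local analysis done in the construction of $\phi^{-1}$ (cf. Figure~\ref{fig:add crossing}), so I would lean heavily on that, but it still needs to be spelled out case by case (distinguishing, as in that proof, whether pipes cross smaller-indexed pipes in the boundary rows), and drawing the before/after pipe pictures to confirm the reindexing of which crossings "belong" to pipe $i$ versus pipe $j$ is where the bulk of the work lies.
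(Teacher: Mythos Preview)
Your outline handles the direction ``pipe-dream chute move $\Rightarrow$ tableau chute move'' correctly, and this is indeed the direction the paper dispatches in one line. (A small slip: the set $\{T(r,j)\mid r<i\}$ records the rows in which pipe $j$ passes \emph{vertically} through a crossing with some pipe $r<i$; in pipe-dream coordinates these are the rows $a+1,\dots,b-1$, which lie \emph{below} row $a$, not above it.)

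The real gap is in the converse. You write that the equivalence ``follows'' from bijectivity of $\phi$ together with the translation you have just carried out, but that translation only went one way: from a full rectangle in $P$ you extracted the entry pattern of Definition~\ref{Def:chute move}. You have not shown that the entry pattern in $T$ \emph{forces} a full rectangle in $\phi^{-1}(T)$. Knowing that $a+1,\dots,b-1$ all lie in $\{T(r,j)\mid r<i\}$ tells you pipe $j$ has a vertical crossing in each of those rows, but it does not by itself pin those crossings to a single column, nor does it produce the bump tile shared by pipes $i$ and $j$ in row $b$. Your proposed tool---rules (IT1)/(IT2) applied to $T$ alone---is not what the paper uses, and it is not clear it suffices.

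The paper's argument for this direction is genuinely different from what you sketch: it crucially uses that $T'$ is also a valid inversions tableau, so that $\phi^{-1}(T')$ exists. Since $T$ and $T'$ agree on all boxes whose entries exceed $b$, the two pipe dreams $\phi^{-1}(T)$ and $\phi^{-1}(T')$ agree below row $b$. One then argues by contradiction that if pipes $i$ and $j$ did \emph{not} share a bump tile in row $b$ of $\phi^{-1}(T)$, the two pipe dreams would be forced to agree on too large a region for pipes $i$ and $j$ to cross in row $b$ of $\phi^{-1}(T')$. Once the shared bump in row $b$ is established, condition~(1) of Definition~\ref{Def:chute move} forces pipe $j$ to have no bumps in rows $a+1,\dots,b-1$, and reducedness then forces the same for pipe $i$, yielding the rectangle. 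You should incorporate this comparison-with-$T'$ argument; the rigidity you are hoping for does not come from $T$ alone.
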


\begin{proof}
    First, given a chute move on pipe dreams as in Definition \ref{def: chute move}, we can see that the corresponding inversions tableaux satisfy all the conditions of Proposition \ref{Prop:chute move}. Next, we show the other direction. Assume $T$ and $T'$ are inversions tableaux which satisfy the conditions of Proposition \ref{Prop:chute move}. Then we know that in $\phi^{-1}(T)$, pipes $i$ and $j$ cross in row $a$. 
    
    We will show that there must also be a bump tile containing pipes $i$ and $j$ in column $b$. Assume for contradiction that this is not the case. Let $c_1$ be the largest column index of any tile containing pipe $i$ in row $b$, and let $c_2$ be the smallest column index of any tile containing pipe $j$ in row $b$. Since $i$ and $j$ have not crossed yet and do not share a bump tile in row $b$, we must have $c_1 < c_2$.

    By our construction, $\phi^{-1}(T)$ and $\phi^{-1}(T')$ must have all the same crossings below row $b$, so they agree below row $b$. Suppose pipe $k$ enters row $b$ in some column $c \leq c_2$. Then it cannot cross pipe $j$ vertically in row $b$, so by 3, we must have $T(l,j)=T'(l,j)$ for all $l$ both greater or less than $j$. From this, we know that $\phi^{-1}(T)$ and $\phi^{-1}(T')$ agree on all tiles in $\{(r,c)| r<b \} \cup \{(b,c)| c \leq c_2 \}$. However this contradicts pipes $i$ and $j$ being able to cross in row $b$ of $T'$.

    Now we know there must be a bump tile containing pipes $i$ and $j$ in row $b$. We also know from condition 2 that pipe $j$ must vertically cross some other pipe in every row between $a+1, a+2, \dots, b-1$. Thus pipe $j$ can have no bumps in rows $a+1, \dots b-1$. Every pipe that crosses $j$ horizontally in rows $a+1, \dots b-1$ must also cross pipe $i$ horizontally in one of these rows (otherwise it would have to cross wither $i$ or $j$ twice), so pipe $i$ likewise has no  bumps in rows $a+1, \dots b-1$. This gaurantees exactly the rectangular grid shape in Definition \ref{def: chute move}. On such a rectangle, making the changes as governed by conditions 1 and 2 is exactly a chute move.
\end{proof}
\begin{figure}
        \centering
    \begin{tikzpicture}
    \crosses[white][dashed]{1/3}
    \bumps[white][blue]{1/4, 2/2}
    \crosses[blue][white]{1/5}
    \crosses[dashed][white]{2/5}
    \crosses[red][white]{3/5}
    \crosses[blue][dashed]{2/3}
    \bumps[blue][black,dashed]{2/4}
    \crosses[black,dashed][blue]{3/2}
    \crosses[black,dashed]{3/3}
    \bumps[black,dashed][red]{3/4}
    \bumps[red][white]{3/6}
    \crosses[black,dashed][red]{2/6}
    \crosses[blue][red]{1/6}
    \bumps[blue][white]{4/2}
    \shadedsquares[gray!30]{3/1, 4/1, 4/2, 4/3, 4/4, 4/5}
     \node[right] at (6,-0.5) {\Large \text{row } a};
    \node[right] at (6,-2.4) {\Large \text{row } b};
     \node[left] at (1,-3.5) {\Large \textcolor{blue}{i}};
    \node[below] at (3.5,-3) {\Large \textcolor{red}{j}};
\end{tikzpicture}
        \caption{A region of the pipe dream corresponding to an inversions tableau T, in which pipes $i$ and $j$ do not share a bump tile in row $b$. Given $T$ and $T'$ as in the statement of Proposition \ref{Prop:chute move}, $\phi^{-1}(T)$ and $\phi^{-1}(T')$ must agree on all tiles weakly below left of the shaded region. They must also agree on all crossings of pipes that enter row $b$ left of pipe $j$ (illustrated with dashed lines). Note that this prevents $i$ and $j$ from intersecting in row $b$ of $\phi^{-1}(T')$.}
        \label{fig:IT-bump}
    \end{figure}
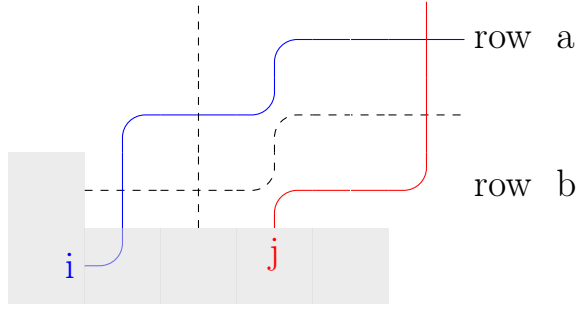

\begin{corollary}
\label{cor:chute on column}
Let $T$ be an inversions tableaux and $T'=C_{ij}T$ be the inversions tableaux obtained from $T$ by the chute move $C_{ij}$. Let $k$ be a column in which $T(i,k)=a$ and $T'(i,k)=b$. Then for all $x$ such that $a<x<b$, there exists some $i_x < i$ such that $T(i(x),k) = m$.
   \end{corollary}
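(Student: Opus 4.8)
The plan is to route the argument through column $j$ — the column in which the chute move $C_{ij}$ is anchored — rather than trying to work inside column $k$ directly. Unpacking Definition~\ref{Def:chute move}, the hypotheses $T(i,k)=a$ and $T'(i,k)=b$ force (via condition~2) that $T(j,k)=b$, and (via condition~1) that $b$ is the least integer exceeding $a$ that does \emph{not} occur among $\{T(r,j)\mid r<i\}$. Consequently every integer $x$ with $a<x<b$ does occur there: for each such $x$ there is a row $p(x)<i$ with $T(p(x),j)=x$, and this row is unique by rule (IT2). The goal then becomes showing that each such $x$ also occurs in column $k$ in some row strictly above $i$.

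First I would transport each value $x\in\{a+1,\dots,b-1\}$ from column $j$ into column $k$ using the Rectangle Rule. Since $p(x)<i<j<k$, Lemma~\ref{lemma:balanced}(2) applied to the triple $(p(x),j,k)$ says $T(p(x),k)$ lies weakly between $T(p(x),j)=x$ and $T(j,k)=b$; as $x<b$ this means $x\le T(p(x),k)\le b$. Because $T(j,k)=b$ and $p(x)\ne j$, rule (IT2) forbids $T(p(x),k)=b$, so in fact $a<x\le T(p(x),k)\le b-1$. Thus $T(p(x),k)$ is again an element of $\{a+1,\dots,b-1\}$, occurring in the row $p(x)<i$ of column $k$.

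Then I would finish with a pigeonhole count, which is needed because the Rectangle Rule only yields $T(p(x),k)\ge x$, not equality. The rows $p(a+1),\dots,p(b-1)$ are pairwise distinct (distinct entries of column $j$ sit in distinct rows, by (IT2)), so the entries $T(p(a+1),k),\dots,T(p(b-1),k)$ are pairwise distinct as well (distinct rows of column $k$, again by (IT2)). Hence $x\mapsto T(p(x),k)$ is an injection from the $(b-a-1)$-element set $\{a+1,\dots,b-1\}$ to itself, so it is a bijection; in particular it is surjective, and for the given $x$ there is some $x'$ with $T(p(x'),k)=x$, whence $i_x:=p(x')<i$ is the desired row.

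The one genuine idea, and the step most likely to be missed, is the observation in the first paragraph that the intermediate values live in column $j$: in an arbitrary weakly balanced tableau, column $k$ could leap from $a$ to $b$ with nothing between, so the minimality of $b$ within column $j$ is exactly what supplies the missing values, and the Rectangle Rule is the vehicle that carries the whole interval $(a,b)$ over to column $k$. After that the cardinality argument is routine but essential, since the Rectangle Rule alone gives only a one-sided bound.
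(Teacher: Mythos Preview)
Your argument is correct and begins exactly as the paper's does: both use condition~1 of Definition~\ref{Def:chute move} to locate, for each $x\in(a,b)$, a row $p(x)<i$ with $T(p(x),j)=x$, and then apply the Rectangle Rule in $T$ to the triple $p(x)<j<k$ to obtain $x\le T(p(x),k)$ (using (IT2) against $T(j,k)=b$ to sharpen the upper end to $b-1$). The two proofs diverge in how they force equality. The paper applies the Rectangle Rule a second time, now in $T'$: since $T'(p(x),j)=x$, $T'(j,k)=a$ (condition~2), and $T'(p(x),k)=T(p(x),k)$ (condition~3), one gets $a\le T(p(x),k)\le x$, which combined with the first inequality yields $T(p(x),k)=x$ on the nose. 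Your pigeonhole finish reaches the same endpoint while appealing only to $T$; it is slightly longer but has the minor advantage of not invoking the well-formedness of $T'$. One small wording slip: you write ``some row strictly above $i$'' where, in the paper's bottom-to-top row indexing, $p(x)<i$ is \emph{below} row $i$; the mathematics is unaffected.
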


   \begin{proof}
    This is true when $k=j$ by condition 1 of Proposition \ref{Prop:chute move}. For $x$ such that $a<x<b$, we then have $T(i_x,j) = m$ for some $i_x <i$. For any other column $i_x>j$ such that $T(i,k)=a$ and $T'(i,k)=b$, we have $T(j,k)=b$ and $T'(j,k)=a$.
    Since $T$ is an inversions tableaux, the rectangle rule tells us that $T(i_x,j) = m \leq T(i_x,k) < T(j,k) = b$ , so $T(i_x,k) \geq x$. Also $T'$ being an inversions tableaux tells us that $T(i_x,k) = x \geq T(i_x,k) > T(j,k) = a$, so in fact $T(i_x,k)=x$. Thus for all $x$ such that $a < x < b$, there is a box containing $x$ below $(i,k)$ in column $k$ as desired.
\end{proof}

Corollary \ref{cor:chute on column} then gives us a nice way to understand the action of a chute move as restricted to a single column.

Let $D$ be an inversions diagram for $w$, and $T$ be a tableaux with numbers in all the shaded boxes of $D$. Given a column $c$ of $T$, let $i_1 < i_2 \dots < i_m$ be all the row indices such that $(i_j,c) \in \Inv(w)$ for $j = 1, \dots, m$. In other words, the set of shaded boxes in column $c$ of $T$ is exactly $(i_1,c), \dots, (i_m,c)$. Define
$$word_c(T) = T(i_1,c) T(i_2,c) \dots T(i_m,c).$$
In the case where $T$ is an inversions tableau, $word_c(T)$ is a \emph{partial permutation}, that is, a word consisting of numbers $1, \dots, n$, where each number appears at most once.

\begin{example}
    For the inversions tableaux $T$ on the right in Figure \ref{fig:in proof}, we have $word_6(T)=1243$. For the tableaux $T$ in Figure \ref{fig:inverse Grassmannian}, we have $word_7(T)=125$.
\end{example}

\begin{observation}
\label{obs:on column}
A chute move that changes column $c$ of an inversions tableaux $T$ will do one of the following to the corresponding partial permutation $word_c(T) = u_1 u_2 \dots u_m$:
\begin{enumerate}[i)]
    \item For some $i$, replace $u_i$ with a number $b>u_i$ such that $b \notin \{u_1, \dots, u_m\}$, but for all $u_i < x < b$ there is some $j<i$ such that $u_j = x$.(This case happens when column $c$ is the leftmost column changed by the chute move).
    \item For some $i<j$, trade the values $u_i$ and $u_j$. This can only happen if $u_i<u_j$, and for all $u_i<x<u_j$ there is some $k<i$ such that $u_j = x$. (This case occurs for any other column $c$ that changes under the action of the chute move.)
\end{enumerate}
\end{observation}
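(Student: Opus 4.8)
The plan is to derive Observation~\ref{obs:on column} as a direct consequence of Proposition~\ref{Prop:chute move}, Corollary~\ref{cor:chute on column}, and the defining conditions of a chute move on inversions tableaux (Definition~\ref{Def:chute move}). The key conceptual point is that a chute move $C_{ij}$ changes entries only in boxes of the form $(i,k)$ and $(j,k)$ for columns $k \geq j$, so a column $c$ is altered precisely when either $c = j$ (which turns out to be the leftmost altered column) or $c$ is one of the columns $k > j$ appearing in condition~2 of Definition~\ref{Def:chute move}. These two situations give cases (i) and (ii) respectively, and the task is to translate each into a statement purely about the partial permutation $word_c(T)$.

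First I would handle case (i), where $c = j$. By condition~1 of Definition~\ref{Def:chute move}, the entry $T(i,j) = a$ is replaced by the smallest $b > a$ with $b \notin \{T(r,j) \mid r < i\}$; no other box in column $j$ changes (the boxes $(i',j)$ with $i' \neq i$ are untouched by conditions 1--3). Writing $word_j(T) = u_1 \cdots u_m$ with the rows increasing, the entry $T(i,j)$ occupies some position, say position $p$, so $u_p = a$. The condition ``$b$ is the smallest value exceeding $a$ not among $\{T(r,j) \mid r < i\}$'' says precisely that every $x$ with $a < x < b$ already appears in a box $(r,j)$ with $r < i$, i.e.\ in some position $q < p$ of $word_j(T)$; and $b$ itself is not among the $u$'s since it is a genuinely new value in that column (it was not used before and $C_{ij}T$ is still an inversions tableau, so (IT2) forbids repeats in column $j$). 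This is exactly statement (i) with $i$ there equal to $p$.

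Next I would treat case (ii), for a column $c > j$ that is altered. Such a $c$ is one of the columns $k$ in condition~2 of Definition~\ref{Def:chute move}, so $T(i,c) = a$, $T(j,c) = b$, and in $T' = C_{ij}T$ these two entries are swapped: $T'(i,c) = b$, $T'(j,c) = a$; all other boxes of column $c$ are unchanged. Since $i < j$, the box $(i,c)$ sits strictly above $(j,c)$, hence $T(i,c) = a$ occupies an earlier position than $T(j,c) = b$ in $word_c(T) = u_1 \cdots u_m$; say these are positions $p < q$ with $u_p = a$, $u_q = b$. Swapping them realizes the ``trade $u_p$ and $u_q$'' in the Observation, and because a chute move always raises the entry in row $i$ (condition~1 forces $b > a$) we indeed have $u_p < u_q$. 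The remaining claim, that every $x$ with $a < x < b$ occurs in some position $r < p$, is precisely Corollary~\ref{cor:chute on column}: for $a < x < b$ that corollary produces $i_x < i$ with $T(i_x, c) = x$, and since $i_x < i$ the box $(i_x,c)$ lies above $(i,c)$, so $x$ appears at a position earlier than $p$ in $word_c(T)$. Finally one checks these are the only possibilities: any column $c < j$ is unaffected (no box $(i,k)$ or $(j,k)$ with $k < j$ is changed by conditions 1--3), and the column $c = j$ cannot also be swapped in the manner of (ii), so the dichotomy is exhaustive.

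I expect the only real subtlety — and the main thing to get right — is the bookkeeping translating ``box $(r,j)$ with $r < i$'' and ``row $i_x < i$'' into ``position $< p$ in $word_c(T)$''; this rests on the convention that $word_c$ lists entries in order of increasing row index, so that higher boxes come earlier, which must be invoked carefully. Everything else is an essentially immediate unpacking of Definition~\ref{Def:chute move} together with Corollary~\ref{cor:chute on column}. No new combinatorial input is needed beyond what has already been established.
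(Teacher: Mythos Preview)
Your argument is correct and is exactly the unpacking the paper intends: the Observation is stated without its own proof because it follows immediately from Definition~\ref{Def:chute move} and Corollary~\ref{cor:chute on column}, and you have spelled this out accurately. One small slip to fix: in the paper's convention rows are indexed bottom to top, so $i<j$ means $(i,c)$ lies \emph{below} $(j,c)$, not above (and likewise $(i_x,c)$ with $i_x<i$ lies below $(i,c)$); your positional conclusions ($p<q$, ``earlier position'') are nevertheless right, since $word_c$ lists entries in increasing row index and smaller row index means earlier position---just correct the words ``above'' to ``below'' and drop the phrase ``higher boxes come earlier''.
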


We can simplify this action to a single rule by extending $word_c(T)$ to be a full permutation. Given a partial permutation $u = u_1 u_2 \dots u_m$ with all $u_i \leq n$, let $s_n(u) = u_1 u_2 \dots u_m \textcolor{Green}{\dots u_n}$ be the permutation in $S_n$ obtained from $u$ by tacking on all values in
$\{1, \dots, n\} \setminus \{u_1, \dots, u_m\}$ to the end of $u$ in increasing order.

\begin{example}
 For the inversions tableaux $T$ on the right in Figure \ref{fig:in proof}, we have $s_7(word_6(T))=1243\textcolor{Green}{567}$.
For the tableaux $T$ in Figure \ref{fig:inverse Grassmannian}, we have $s_8(word_7(T)) = 125\textcolor{Green}{34678}$. 
\end{example}

\begin{observation}
\label{obs:mediocre bruhat}
    On any column $c$ affected by a chute move, the chute move will always act on the corresponding word $s_n(word_c(T))=u_1 u_2 \dots u_n \in S_n$ by property ii) from Observation \ref{obs:on column}. This motivates our definition of Mediocre Bruhat order in the next subsection.
\end{observation}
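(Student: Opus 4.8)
The plan is to treat the two cases of Observation~\ref{obs:on column} separately and show that, after applying $s_n$, each becomes an instance of property~ii). Throughout, write $word_c(T)=u_1\cdots u_m$, and recall that $s_n(word_c(T))$ appends the elements of $\{1,\dots,n\}\setminus\{u_1,\dots,u_m\}$ in increasing order to positions $m+1,\dots,n$.

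First I would dispatch Case~ii), which transfers verbatim. If $c$ is not the leftmost column changed by the chute move, then $word_c(T')$ is obtained from $word_c(T)$ by interchanging the entries in two positions $i<j$ (both $\le m$). Since the underlying set of values is unchanged, $s_n$ appends the same increasing tail to $word_c(T)$ and to $word_c(T')$, so $s_n(word_c(T'))$ is obtained from $s_n(word_c(T))$ by the very same interchange of positions $i$ and $j$. The hypotheses $u_i<u_j$ and ``every $x$ with $u_i<x<u_j$ equals $u_k$ for some $k<i$'' only refer to positions $\le j\le m$, hence are literally the same for the extended word; this is property~ii).

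The substance is Case~i), where $c$ is the leftmost column changed. Here $word_c(T')$ comes from $word_c(T)$ by replacing $u_i$ with some value $b>u_i$ with $b\notin\{u_1,\dots,u_m\}$, subject to the condition that every $x$ with $u_i<x<b$ occurs as some $u_j$ with $j<i$. Since $b$ is a legitimate column entry of an inversions tableau for $w\in S_n$ we have $b\le n$, so $b$ appears in the increasing tail of $s_n(word_c(T))$, say in position $p$, with $p\ge m+1>m\ge i$. Passing to $T'$, the tail of $s_n(word_c(T'))$ is $\bigl(\{1,\dots,n\}\setminus\{u_1,\dots,u_m\}\bigr)\cup\{u_i\}\setminus\{b\}$, again sorted increasingly. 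The key step is to check that $u_i$ lands in exactly position $p$ there: by the property~i) hypothesis every value strictly between $u_i$ and $b$ lies in $\{u_1,\dots,u_{i-1}\}$ and so is absent from both tails, whence the largest tail entry below $b$ is also below $u_i$ and the smallest tail entry above $b$ is also above $u_i$; deleting $b$ and inserting the smaller value $u_i$ therefore moves no other tail entry, and $u_i$ occupies the slot vacated by $b$. Consequently $s_n(word_c(T'))$ differs from $s_n(word_c(T))$ precisely by swapping the entry $u_i$ in position $i$ with the entry $b$ in position $p$; since $i<p$, $u_i<b$, and ``every $x$ with $u_i<x<b$ equals $u_k$ for some $k<i$'' is exactly the inherited property~i) hypothesis, this is an instance of property~ii) with the pair $(i,p)$ and values $(u_i,b)$.

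The main obstacle is precisely the bookkeeping in Case~i): one must argue carefully that removing $b$ from the sorted tail and inserting the smaller value $u_i$ leaves the tail order, and in particular the index $p$, unchanged. Everything rides on the ``no intermediate tail values'' fact, which is an immediate consequence of the property~i) hypothesis (equivalently, of Corollary~\ref{cor:chute on column}); once that is in hand, the identification with property~ii) is just a side-by-side comparison of the two defining conditions.
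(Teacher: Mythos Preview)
Your argument is correct. The paper does not supply a proof of this observation; it is stated as immediate from Observation~\ref{obs:on column} and the definition of $s_n$, so your write-up simply fills in what the paper leaves implicit. The case split you chose is the natural one: property~ii) transfers verbatim since the tail is unchanged, and in property~i) the key point---which you verify carefully---is that the hypothesis forces the tail of $s_n(word_c(T))$ to contain no value strictly between $u_i$ and $b$, so removing $b$ and inserting $u_i$ puts $u_i$ in the same slot $p$ and leaves every other tail entry in place, reducing the move to a swap of positions $i$ and $p$.
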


\subsection{Mediocre Bruhat Order}
For a permutation $w=w_1 w_2\dots w_n$ written in one line notation, we say the operator $t_{ij}$ swaps the positions of the number $i$ and $j$. We will define a partial order on permutations which we relate to both their Lehmer codes and to the chute moves poset.

\begin{definition}
\label{def:mediocre bruhat}
    The \emph{mediocre Bruhat order} is a partial order on the set of permutations $S_n$ with covering relations given by
    $u=u_1u_2\dots u_n \lessdot w$ if all of the following hold:
    \begin{itemize}
        \item $w = u t_{ij}$ for some $i<j$,
        \item $u_i < u_j$, and
        \item For all $x$ satisfying $u_i<x<u_j$, there exists some $k<i$ such that $u_k$=x.
    \end{itemize}
\end{definition}

By Theorem \ref{Thrm: mediocre Bruhat}, the mediocre Bruhat order is isomorphic to the \emph{middle order} on permutations defined in \cite{middleorder} though they are not identical due to different choices of convention (much like left and right weak Bruhat order). Theorem 1.9 of \cite{middleorder} also provides a classification of the covering relations similar to our Definition \ref{def:mediocre bruhat} in terms of permutation matrices. 

Observe that for a fixed choice of $u$ and $i$, there always exists a unique $j$ such that $t_{ij}u$ covers $u$ in mediocre Bruhat order as per the properties above.

\begin{example}
  The permutation $ u = 5236\textcolor{Green}{4}1\textcolor{Green}{7} \lessdot 5236\textcolor{Green}{7}1\textcolor{Green}{4} = t_{57} u$ because both 5 and 6 fall to the left of $\textcolor{Green}{4}$ in one line notation for $u$.

  However 23\textcolor{Green}{5}4\textcolor{Green}{7}16 is not covered by 23\textcolor{Green}{7}4\textcolor{Green}{5}16 because $\textcolor{Green}{5}< 6 < \textcolor{Green}{7}$, but $6$ falls to the right of $5$ in one line notation of $u$.
\end{example}

Note that if two permutations $u$ and $w$ are related in weak Bruhat order, then they will be related in mediocre Bruhat order. If they are related in mediocre Bruhat order, they will be related in strong Bruhat order, so mediocre Bruhat order is strictly in between strong and weak Bruhat order.

Observation \ref{obs:mediocre bruhat} directly implies the following Lemma.

\begin{lemma}
\label{lemma:extended column}
    Let $T$ be an inversions tableau for a permutation $w \in S_n$. Let $T'$ be obtained from $T$ via chute move. Then for all columns $c$, we have either $s_n(word_c(T)) = s_n(word_c(T'))$ or $s_n(word_c(T)) \lessdot s_n(word_c(T'))$ in mediocre Bruhat order.
\end{lemma}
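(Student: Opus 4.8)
The plan is to reduce Lemma~\ref{lemma:extended column} to a direct application of Observation~\ref{obs:on column} together with the characterization of the chute-move action on a single column given by Corollary~\ref{cor:chute on column}. First I would split into cases according to whether the column $c$ is affected by the chute move at all. If $c$ is not among the columns changed by $C_{ij}$, then by condition 3 of Definition~\ref{Def:chute move} we have $word_c(T) = word_c(T')$, hence $s_n(word_c(T)) = s_n(word_c(T'))$ and we are in the trivial case. So assume $c$ is a column that is changed by the chute move; by Observation~\ref{obs:on column}, either $c$ is the leftmost changed column (case i) or it is some other changed column (case ii).

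Next I would handle case ii, which Observation~\ref{obs:mediocre bruhat} singles out as the ``generic'' behavior. Here $word_c(T) = u_1 \dots u_m$ and $word_c(T')$ is obtained by swapping two entries $u_i < u_j$ with $i < j$, where for every $x$ with $u_i < x < u_j$ there is some $k < i$ with $u_k = x$. Passing to $s_n$, the appended tail consists of exactly the same values in both $T$ and $T'$ (the set of entries in the column is unchanged, only two positions are permuted), so $s_n(word_c(T'))$ is obtained from $s_n(word_c(T)) = u_1 \dots u_n$ by the transposition $t_{u_i u_j}$; the inequality $u_i < u_j$ and the ``all intermediate values appear to the left'' condition are exactly the two remaining clauses in the definition of the covering relation in mediocre Bruhat order. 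Hence $s_n(word_c(T)) \lessdot s_n(word_c(T'))$, as claimed.

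The main point requiring care is case i, the leftmost affected column, where Observation~\ref{obs:on column}(i) only tells us that some entry $u_i$ is replaced by a \emph{new} value $b \notin \{u_1,\dots,u_m\}$ with all intermediate values $u_i < x < b$ occurring to the left of position $i$. At the level of $word_c$ this is not a transposition of existing entries, so it is not immediately a mediocre-Bruhat cover; the resolution is that passing to $s_n$ converts it into one. Indeed, since $b \notin \{u_1,\dots,u_m\}$ but $b \le n$, the value $b$ appears somewhere in the appended increasing tail of $s_n(word_c(T))$, say in position $j > m \ge i$. Replacing $u_i$ by $b$ in $word_c(T)$ and then re-appending the complementary set in increasing order has exactly the effect, on $s_n(word_c(T)) = u_1 \dots u_n$, of swapping the entries in positions $i$ and $j$ — because removing $u_i$ from the column and inserting $b$ changes the leftover set precisely by exchanging $u_i$ for $b$, and both leftover sets are written in increasing order in the tail. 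So $s_n(word_c(T')) = t_{u_i b}\, s_n(word_c(T))$. It remains to verify the two covering conditions: $u_i < b$ holds since $b > u_i$ by hypothesis, and for every $x$ with $u_i < x < b$ the hypothesis of Observation~\ref{obs:on column}(i) gives some position $k < i$ with $u_k = x$, which is exactly the required condition. Therefore $s_n(word_c(T)) \lessdot s_n(word_c(T'))$ in this case as well, completing the proof.

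I expect the only genuinely delicate step to be the bookkeeping in case i verifying that ``delete $u_i$ from the column, insert $b$, and re-sort the tail'' is literally the transposition $t_{u_i b}$ on the full permutation $s_n(word_c(T))$; once that identification is made, the two order conditions transfer verbatim from Observation~\ref{obs:on column}. Everything else is routine casework, and no estimate or construction beyond what is already recorded in Observations~\ref{obs:on column} and \ref{obs:mediocre bruhat} is needed.
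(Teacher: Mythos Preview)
Your proposal is correct and follows essentially the same route as the paper: the paper simply cites Observation~\ref{obs:mediocre bruhat} (which asserts that after passing to $s_n$ both cases of Observation~\ref{obs:on column} act by property~(ii)) and notes that property~(ii) is literally the covering relation in mediocre Bruhat order. You unpack that observation explicitly, and the ``delicate step'' you flag---that the intermediate-value condition in case~(i) forces $u_i$ to land in the slot vacated by $b$ in the sorted tail, so the operation on $s_n(word_c(T))$ is a genuine transposition---is exactly the content the paper packages into Observation~\ref{obs:mediocre bruhat}; one small notational slip is that in the paper $t_{ij}$ swaps positions $i$ and $j$, so you want $t_{ij}$ (resp.\ $t_{i,m+p}$) rather than $t_{u_iu_j}$ (resp.\ $t_{u_ib}$).
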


The natural next question is, given permutations $u$ and $w$ in $S_n$, how do we know if they are related in mediocre Bruhat order? The answer is in their Lehmer codes.

\begin{theorem}
\label{Thrm: mediocre Bruhat}
    Let $u,w \in S_n$ be permutations with Lehmer codes $(c_1, \dots , c_n)$ and $(d_1, \dots , d_n)$ respectively. Then $u \leq w$ in mediocre Bruhat order if and only if $c_i \leq d_i$ for all $i$.
\end{theorem}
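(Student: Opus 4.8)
The plan is to prove both directions by induction on the length of a saturated chain in mediocre Bruhat order. First I would establish the ``only if'' direction: it suffices to show that a single mediocre cover relation $u \lessdot w = u t_{ij}$ weakly increases every coordinate of the Lehmer code. Recall from Section \ref{section:lehmer code} the second description $c_i = \#\{m : m < w_i \text{ and } m \notin \{w_1,\dots,w_{i-1}\}\}$; equivalently $c_i$ counts, for the position holding value $w_i$, how many smaller values occur weakly to its right. When we apply $t_{ij}$ to $u$ (swapping the \emph{values} $i < j$, which sit in positions $p < q$ by the covering hypothesis, with all of $i+1,\dots,j-1$ appearing to the left of position $p$), the only positions whose code entries can change are $p, q$, and positions strictly between them. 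A direct case analysis — using that none of the ``skipped'' values $i+1,\dots,j-1$ lie between positions $p$ and $q$ — shows each affected $c$-entry either stays the same or increases by exactly the amount forced by moving the larger value $j$ leftward past the smaller value $i$. (This is exactly the combinatorial content that distinguishes mediocre covers from arbitrary strong-Bruhat transpositions, where a code entry could drop.) Summing over a chain gives $c_i \le d_i$ for all $i$ whenever $u \le w$.

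For the ``if'' direction, suppose $c_i \le d_i$ for all $i$ with $u \ne w$; I would produce a single mediocre cover $u \lessdot u'$ with $u' \le w$ still satisfied, i.e. with $\mathrm{code}(u') = (c'_1,\dots,c'_n)$ satisfying $c_i \le c'_i \le d_i$ for all $i$ and strictly larger in at least one coordinate, then induct on $\sum_i (d_i - c_i)$. To build the cover: choose the smallest position $i$ with $c_i < d_i$. By the uniqueness remark after the definition of mediocre Bruhat order, for that $i$ there is a unique $j$ with $u \lessdot u t_{ij}$; I need to check this particular cover does not overshoot, i.e. that applying it keeps every entry $\le d_i$. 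The key point is that the $j$ singled out by the mediocre-cover rule is the \emph{value immediately above $u_i$ among values not appearing to the left of position $i$}, so the swap increases $c_i$ by exactly $1$ (it inserts exactly one new smaller-value-to-the-right, namely the old $u_i$) and leaves $c_{i'}$ for $i' < i$ unchanged; entries $c_{i'}$ for $i' > i$ can only change in controlled ways that I must verify stay bounded by $d_{i'}$, using the minimality of $i$ and the fact that $\mathrm{code}$ determines the permutation.

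An alternative, possibly cleaner route for the ``if'' direction is to induct from the top: pick the \emph{largest} position $i$ with $c_i < d_i$ and realize the needed increment there; since code entries for positions $>i$ already agree ($c_{i'} = d_{i'}$) and a mediocre move at position $i$ touches only positions $\ge i$ in a monotone way, one can argue the move raising $c_i$ toward $d_i$ preserves agreement above and preserves $\le$ below. I expect the \textbf{main obstacle} to be the bookkeeping in this step: verifying that the specific transposition dictated by the mediocre-cover rule changes the intermediate code entries in exactly the predicted (monotone, bounded) way, rather than in some uncontrolled fashion. The cleanest way to handle this is to recast everything in terms of inversions diagrams — $c_i$ is the number of shaded boxes in row $i$ of $T_u$ (Section \ref{section:lehmer code}) — and to track which boxes get shaded/unshaded under a mediocre move, using Corollary \ref{cor:inversion shape} to confirm the intermediate diagrams remain valid inversion sets. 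Once the single-step statement is nailed down, both inductions close immediately, and combining them yields the stated equivalence.
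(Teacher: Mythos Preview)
Your overall strategy---analyze the effect of a single mediocre cover on the Lehmer code, then induct---is exactly the paper's approach. However, you are missing the key observation that collapses the whole argument, and without it your ``if'' direction has a real gap.

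The paper proves the sharper single-step statement: a mediocre cover $u \lessdot u' = u t_{ij}$ (with $i<j$ the positions being swapped) changes the Lehmer code by \emph{exactly} $+e_i$. That is, $c'_i = c_i + 1$ and $c'_k = c_k$ for every $k \neq i$. The verification is a short case check using precisely the mediocre condition: for positions $k$ with $i<k<j$ the value $u_k$ lies outside the interval $(u_i,u_j)$ (since all such intermediate values sit at positions $<i$), so swapping $u_i \leftrightarrow u_j$ in the prefix does not alter $c_k$; and at position $j$ the two relevant sets of ``smaller values not yet seen'' coincide. Once this is established, mediocre covers correspond bijectively to the cover relations $c \lessdot_L c+e_i$ in the componentwise order on codes, and both directions of the theorem are immediate.

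Your Step~1 only claims the code \emph{weakly increases}, which is strictly weaker. This weaker statement is not enough to close Step~2 as you have written it: if some $c_{i'}$ with $i'>i$ were allowed to strictly increase, then your ``pick the largest $i$ with $c_i<d_i$'' variant would overshoot (since $c_{i'}=d_{i'}$ there already), and your ``pick the smallest $i$'' variant gives no obvious bound either. You correctly flag this as the main obstacle, but the resolution is not elaborate bookkeeping via inversions diagrams---it is simply to prove the stronger statement $c' = c + e_i$ up front, after which there is nothing left to verify.
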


\begin{proof}
    For Lehmer codes $c,d$ if $c_i \leq d_i$ for all $i$, denote this by $c \leq_L d$. This defines a partial order on Lehmer codes. This order has covering relation $c \lessdot_L d$ if and only if $d = c + e_i$ for some standard basis vector $e_i$ (the vector with a one in spot $i$ and zeros elsewhere). To prove the Theorem, it suffices to show that for permutations $w,u \in S_n$, we have $u \lessdot w$ if and only if $code(u) \lessdot_L code(w)$.  

    We will actually show this in $S_{\infty}$ where we can always add any $e_i$ to go up in the Lehmer code poset. To see for $S_n$, we simply restrict to permutations (or Lehmer codes of permtuations) in $S_n$.

    Let $w=w_1 w_2 w_3 \dots \in S_{\infty}$ be a permtuation with Lehmer code $c=(c_1,c_2,c_3,\dots)$. For any $i \in Z_{\geq 0}$, let $w' = t_{ij}w$ where $w_j$ is the smallest value in the set $\{w_k | k>i\}$. This is equivalent to saying $w'$ covers $w$ in mediocre Bruhat order (and picking a specific $i$), so it is enough to show that $code(w') = c + e_i$. Let $code(w') = c' = (c'_1, c'_2, \dots)$. Since $w_j = w'_j$ for all $j<i$, we also have that $c_j = c'_j$ for all $j<i$. Next, we know $w'_i =w_j$ where $w_j$ is the smallest value greater than $w_i$ that is not contained in $\{w_1,w_2, \dots, w_i\}$. This guarantees that
    $$c'_i = \#\{n | n < w'_i \text{ but } n \notin w_1, \dots ,w_{i-1}\} = \#\{n | n < w_i \text{ but } n \notin w_1, \dots ,w_{i-1}\}+1 =  c_i+1.$$
    
    For all other spots $k$ such that $k > i$ and $k\neq j$, we have that $w_k < w_i$ or $>w_j$, and as a result the number of entries smaller than $w_k=w'_k$ is the same in the sets $\{w_1, \dots , w_{k-1}\}$ and $\{w'_1, \dots, w'_{k-1}\}$, so $c_k = c'_k$.
    Finally for spot $j$ we have that $\{w_1, \dots ,w_j\} = \{w'_1, \dots , w'_j\}$ and also as sets
   $\{n | n < w_j \text{ but } n \notin w_1, \dots ,w_{k-1}\} = \{n | n < w'_j=w_i \text{ but } n \notin w_1, \dots ,w_{k-1}\},$
   so $c_j = c'_j$.
\end{proof}

\subsection{Lehmer Tableaux}

In general, since the first $m$ values of a permutation's Lehmer code depend only on the first $m$ values of the permutation, we can also define the Lehmer code of a partial permutation $w= w_1 w_2 \dots w_m$ as 
$code(w) = c_1 c_2 \dots c_m$ where
$$c_i = \#\{n | n < w_i \text{ but } n \notin \{w_1, \dots ,w_{i-1}\}\}.$$
Then taking the Lehmer code gives us a bijection from partial permutations (of $S_{\infty}$) of length $m$ to elements of $\mathbb{Z}^m_{\geq 0}$. From an inversions tableau $T$, we obtain a Lehmer tableaux by replacing the partial permutation in each column with its corresponding Lehmer code.

\begin{definition}
Let $T$ be an inversions Tableaux for a permutation $w$. The \emph{Lehmer tableaux} of $T$ is the tableau $\Lambda(T)$ defined as follows. For each $(i,j)\in\Inv(w)$, let $\Lambda(T)(i,j)$ be the number of integers $k\in 1,2, \dots, T(i,j)$ that do not appear below box $(i,j)$ in column $j$ of $T$.  
Let $\mathcal{LT}(w)=\{\Lambda(T) | T \in \mathcal{IT}(w)\}$ denote the set of Lehmer tableaux for $w$.
\end{definition}

Then for an inversions tableau $T$, and column $c$, we have that $code(word_c(T)) = word_c(\Lambda(T)).$ 

\begin{definition}
    Let $L$ be the Lehmer tableau of some inversions tableau $T$. For $C_{ij}$ a chute move on $T$, we define the corresponding \emph{chute move} on $L$ to be
    $C_{ij} L = \Lambda (C_{ij} T).$
\end{definition}

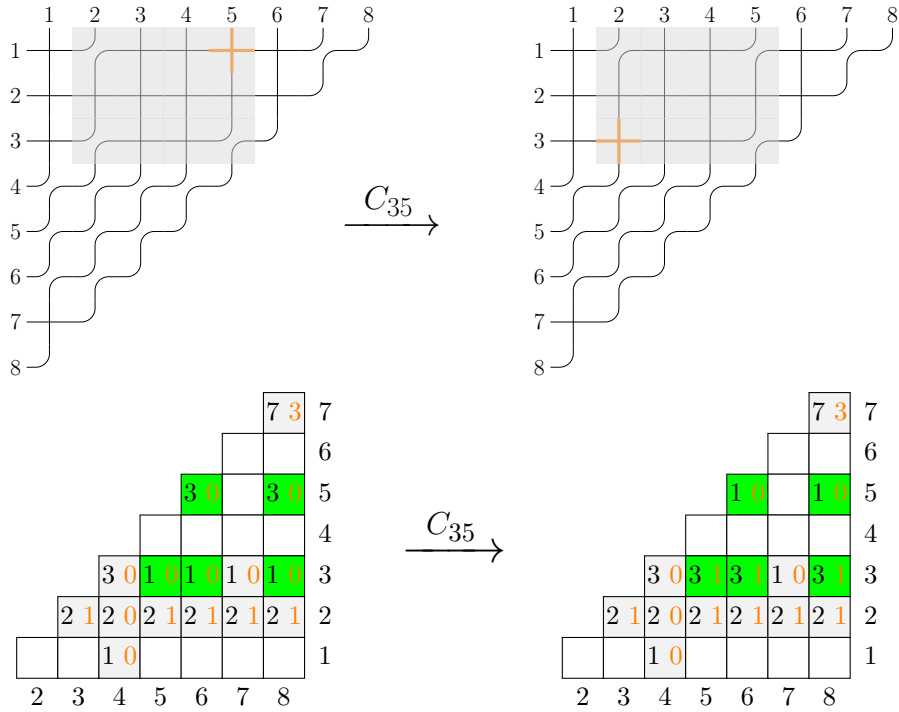
\begin{figure}[h]
\centering
$
\begin{matrix}
\scalebox{0.6}{
\begin{tikzpicture}
\pdcaps{1/8,2/7,3/6,4/5,5/4, 6/3,7/2,8/1}
\crosses{1/1,1/3,1/4,1/6,
         2/1,2/2,2/3,2/4,2/5,2/6,
         3/1,3/3,3/4,
         7/1}
\crosses[orange,line width=2pt]{1/5}
\bumps{1/2,1/7, 3/2, 3/5, 4/1,4/2,4/3,4/4,5/1,5/2,5/3, 6/1,6/2}   
\shadedsquares[gray!30]{1/2,1/3,1/4,1/5,2/2,2/3,2/4,2/5,3/2,3/3,3/4,3/5}
\node at (8,-4) {\Huge $\xrightarrow{\ C_{35} \ }$};
\foreach \i in {1,...,8} {
	\node[left] at (0,-\i+0.5) {\Large{\i}};
    \node[above] at (\i-0.5,0) {\Large \i};
}
\end{tikzpicture}
}
&
\ \ 
&
\scalebox{0.6}{
\begin{tikzpicture}
\pdcaps{1/8,2/7,3/6,4/5,5/4, 6/3,7/2,8/1}
\crosses{1/1,1/3,1/4,1/6,
         2/1,2/2,2/3,2/4,2/5,2/6,
         3/1,3/3,3/4,
         7/1}
\crosses[orange,line width=2pt]{3/2}
\bumps{1/2,1/5,1/7, 3/5, 4/1,4/2,4/3,4/4,5/1,5/2,5/3, 6/1,6/2}   
\shadedsquares[gray!30]{1/2,1/3,1/4,1/5,2/2,2/3,2/4,2/5,3/2,3/3,3/4,3/5}
\foreach \i in {1,...,8} {
	\node[left] at (0,-\i+0.5) {\Large \i};
    \node[above] at (\i-0.5,0) {\Large \i};
}
\end{tikzpicture}
}
\end{matrix}
$

$
\begin{matrix}
\begin{ytableau}
\none & \none & \none & \none & \none & \none & *(gray!10) 7 \ \textcolor{orange}{3} & \none[7] \\
\none & \none & \none & \none & \none & *(white) & *(white) & \none[6] \\
   \none & \none & \none & \none & *(green) 3 \ \textcolor{orange}{0} & *(white) & *(green) 3 \ \textcolor{orange}{0} & \none[5] \\
  \none & \none & \none & *(white) & *(white) &  *(white) & *(white) & \none[4] \\
  \none & \none & *(gray!10) 3 \ \textcolor{orange}{0} & *(green) 1 \ \textcolor{orange}{0} & *(green) 1 \ \textcolor{orange}{0} &  *(gray!10) 1 \ \textcolor{orange}{0} & *(green) 1 \ \textcolor{orange}{0} & \none[3] \\
\none & *(gray!10) 2 \ \textcolor{orange}{1} & *(gray!10) 2 \ \textcolor{orange}{0} & *(gray!10) 2 \ \textcolor{orange}{1} & *(gray!10) 2 \ \textcolor{orange}{1} &  *(gray!10)2 \ \textcolor{orange}{1} & *(gray!10) 2 \ \textcolor{orange}{1} & \none[2] \\
  *(white) & *(white) & *(gray!10) 1 \ \textcolor{orange}{0}& *(white) & *(white) &  *(white) & *(white) & \none[1] \\
  \none[2] & \none[3] & \none[4] & \none[5] & \none[6] &\none[7] & \none[8]\\
\end{ytableau}
&
\mathlarger{\mathlarger{\mathlarger{\mathlarger{ \ \ \xrightarrow{\ C_{35} \ } \ \ }}}}
&
\begin{ytableau}
\none & \none & \none & \none & \none & \none & *(gray!10) 7 \ \textcolor{orange}{3} & \none[7] \\
\none & \none & \none & \none & \none & *(white) & *(white) & \none[6] \\
   \none & \none & \none & \none & *(green) 1 \ \textcolor{orange}{0} & *(white) & *(green) 1 \ \textcolor{orange}{0} & \none[5] \\
  \none & \none & \none & *(white) & *(white) &  *(white) & *(white) & \none[4] \\
  \none & \none & *(gray!10) 3 \ \textcolor{orange}{0} & *(green) 3 \ \textcolor{orange}{1} & *(green) 3 \ \textcolor{orange}{1} &  *(gray!10) 1 \ \textcolor{orange}{0} & *(green) 3 \ \textcolor{orange}{1} & \none[3] \\
\none & *(gray!10) 2 \ \textcolor{orange}{1} & *(gray!10) 2 \ \textcolor{orange}{0} & *(gray!10) 2 \ \textcolor{orange}{1} & *(gray!10) 2 \ \textcolor{orange}{1} &  *(gray!10)2 \ \textcolor{orange}{1} & *(gray!10) 2 \ \textcolor{orange}{1} & \none[2] \\
  *(white) & *(white) & *(gray!10) 1 \ \textcolor{orange}{0}& *(white) & *(white) &  *(white) & *(white) & \none[1] \\
  \none[2] & \none[3] & \none[4] & \none[5] & \none[6] &\none[7] & \none[8]\\
\end{ytableau}
\end{matrix}
$

\caption{Top: The chute move $C_{35}$ on pipe dreams. Bottom: The same chute move on the corresponding inversions tableaux (consisting of the numbers on the left of each box in black) and Lehmer tableaux (consisting of the numbers on the right of each box in orange).}
\end{figure}

\begin{lemma}
   Let $L$ be a Lehmer tableau $L$, and $C_{ij}$ be chute move on $L$ (and therefore on its corresponding inversions tableau). Then for each column $c$,
   either $word_c(C_{ij}L) = word_c(L)$ or $word_c(C_{ij}L) = word_c(L) + e_i$.
 
\end{lemma}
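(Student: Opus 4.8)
The plan is to reduce the statement to a column-by-column assertion and then feed it into the dictionary between mediocre Bruhat covers and Lehmer codes that is proved inside Theorem~\ref{Thrm: mediocre Bruhat}. Let $T$ be the inversions tableau with $\Lambda(T)=L$ and set $T'=C_{ij}T$, so $C_{ij}L=\Lambda(T')$. First I would note that $\Lambda$ is computed one column at a time: $\Lambda(S)(i',c)$ depends only on $S(i',c)$ and the entries of column $c$ of $S$ lying below $(i',c)$. Hence column $c$ of $C_{ij}L$ is determined by column $c$ of $T'$, and, using the identity $code(word_c(S))=word_c(\Lambda(S))$ noted above, it suffices to understand how $C_{ij}$ transforms each partial permutation $word_c(T)$ and then apply $code$. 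Since $T$ and $T'$ are inversions tableaux for the same permutation $w$, each column has the same set of shaded rows $i_1<\dots<i_m$ in $T$ and in $T'$, so all the relevant words and Lehmer codes live in a common $\mathbb{Z}^m$ for that column, and $e_i$ denotes the basis vector indexed by the row $i$ of the chute move (which is shaded in every column affected by $C_{ij}$).

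For columns $c$ that $C_{ij}$ leaves unchanged in $T$, column $c$ of $L$ is unchanged too, so $word_c(C_{ij}L)=word_c(L)$. So fix a column $c$ that $C_{ij}$ does change. By Definition~\ref{Def:chute move} this means $c=j$, or $c=k>j$ with $T(i,k)=a$ and $T(j,k)=b$; in all of these the box $(i,c)$ is shaded. By Lemma~\ref{lemma:extended column} (together with Observations~\ref{obs:on column} and~\ref{obs:mediocre bruhat}) we have $s_n(word_c(T))\lessdot s_n(word_c(T'))$ in mediocre Bruhat order, and the crux of the argument is to identify the smaller transposed position of this cover (the index ``$i$'' in the definition of $\lessdot$) with the position of row $i$ among $i_1<\dots<i_m$. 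For $c=k>j$ this is immediate from Definition~\ref{Def:chute move}, since the entries $a$ at the position of row $i$ and $b$ at the position of row $j$ are interchanged, $i<j$, and $a<b$. For $c=j$ the entry $a=T(i,j)$ at the position of row $i$ is replaced by the strictly larger value $b\notin word_j(T)$; here I would check that because every value strictly between $a$ and $b$ occurs strictly to the left of that position, appending the missing values in increasing order (the operation $s_n$) places $b$ in exactly the slot $a$ would otherwise occupy, so the replacement is literally the transposition of the values $a$ and $b$ in $s_n(word_j(T))$, again with smaller position equal to the position of row $i$.

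To finish I would quote the computation in the proof of Theorem~\ref{Thrm: mediocre Bruhat}: a mediocre Bruhat cover whose smaller transposed position is $p$ raises the Lehmer code by exactly $e_p$, i.e.\ if $u\lessdot w$ with that $p$ then $code(w)=code(u)+e_p$. Applying this to $s_n(word_c(T))\lessdot s_n(word_c(T'))$ with $p$ the position of row $i$, and truncating to the first $m$ coordinates — legitimate because the first $m$ entries of a Lehmer code depend only on the first $m$ entries of the permutation, so they agree with $code(word_c(\cdot))=word_c(\Lambda(\cdot))$ — yields $word_c(C_{ij}L)=word_c(\Lambda(T'))=word_c(\Lambda(T))+e_i=word_c(L)+e_i$.

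I expect the main obstacle to be precisely the identification of the distinguished position of the mediocre Bruhat cover with the position of row $i$ in the column, especially for the leftmost changed column where one at first only sees a replacement rather than a transposition; turning it into a genuine transposition of $s_n(word_j(T))$ is where the ``gap-filling'' hypothesis in the definition of mediocre Bruhat covers does the work. The remaining ingredients — the column-local nature of $\Lambda$, Lemma~\ref{lemma:extended column}, and the code-increment statement extracted from the proof of Theorem~\ref{Thrm: mediocre Bruhat} — are already available.
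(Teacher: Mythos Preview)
Your proposal is correct and follows essentially the same route as the paper, which simply writes ``This follows directly from Lemma~\ref{lemma:extended column} and Theorem~\ref{Thrm: mediocre Bruhat}.'' You have unpacked what that sentence means: the column-local nature of $\Lambda$, the identification of the distinguished position in the mediocre Bruhat cover with the position of row $i$ (handled implicitly in the paper via Observations~\ref{obs:on column} and~\ref{obs:mediocre bruhat}), and the truncation argument relating $code(s_n(word_c(T)))$ to $code(word_c(T))$. Your careful check that, for the leftmost changed column $c=j$, the replacement $a\mapsto b$ becomes a genuine transposition after applying $s_n$ is exactly the content the paper leaves to the reader.
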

\begin{proof}
    This follows directly from Lemma \ref{lemma:extended column} and Theorem \ref{Thrm: mediocre Bruhat}.
\end{proof}

\begin{corollary}Let $T$ and $T'$ be inversions tableaux for a permutation $w$ such that $T'$ is obtained from $T$ by a series of chute moves. Then for all $(i,j) \in \Inv(w)$, we have $\Lambda(T)(i,j) \leq \Lambda(T')(i,j)$.\end{corollary}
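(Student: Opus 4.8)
The plan is to reduce the statement to a single chute move and then read off the conclusion from the preceding lemma one coordinate at a time. First I would record the reduction: by Proposition~\ref{Prop:chute move}, applying a chute move to an inversions tableau for $w$ yields another inversions tableau for the \emph{same} permutation $w$, so along any sequence of chute moves the index set $\Inv(w)$ never changes and all the Lehmer tableaux $\Lambda(T_0),\dots,\Lambda(T_r)$ have the same shape. Hence the entrywise comparison is well posed, and by transitivity of $\le$ on $\mathbb{Z}$ it is enough to prove: if $T' = C_{ij}T$, then $\Lambda(T)(h,l) \le \Lambda(T')(h,l)$ for every $(h,l)\in\Inv(w)$.

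For the single-move step I would fix a box $(h,l)\in\Inv(w)$ and work in column $c = l$. By the definition of the Lehmer tableau, $\Lambda(T)(h,l)$ is precisely the coordinate of the vector $word_c(\Lambda(T))$ attached to the box $(h,l)$, where the shaded boxes of column $c$ are listed in increasing order of row index (bottom to top); the analogous statement holds for $T'$. Now I would invoke the lemma immediately preceding the corollary: for every column $c$, either $word_c(C_{ij}L) = word_c(L)$ or $word_c(C_{ij}L) = word_c(L) + e_i$, where $L = \Lambda(T)$. In either case $word_c(\Lambda(T'))$ is obtained from $word_c(\Lambda(T))$ by adding $0$ or $1$ to a single coordinate and leaving the rest fixed, so every coordinate of $word_c(\Lambda(T'))$ is weakly larger than the corresponding coordinate of $word_c(\Lambda(T))$. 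Reading off the coordinate indexed by $(h,l)$ gives $\Lambda(T)(h,l) \le \Lambda(T')(h,l)$, which is exactly what we want for one move. Chaining this inequality over a sequence $T = T_0 \to T_1 \to \cdots \to T_r = T'$ then yields $\Lambda(T)(i,j) \le \Lambda(T')(i,j)$ for all $(i,j)\in\Inv(w)$.

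I do not expect a genuine obstacle here — the corollary is essentially formal given the preceding lemma and Theorem~\ref{Thrm: mediocre Bruhat} — but the one point that needs a little care is the bookkeeping: identifying the box entry $\Lambda(T)(h,l)$ with the correct coordinate of $word_l(\Lambda(T))$, and noting that whichever coordinate the ``$+\,e_i$'' of the lemma lands on (including the possibility that it falls outside the truncation defining $word_c$, in which case $word_c$ is unchanged), no coordinate of $word_c$ ever decreases. Once that is pinned down, the rest is transitivity.
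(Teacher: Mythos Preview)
Your argument is correct and is exactly the intended one: the paper states the corollary without proof because it follows immediately from the preceding lemma by reading off each coordinate of $word_c(\Lambda(T))$ and then chaining over a sequence of chute moves. The only superfluous worry is the ``falls outside the truncation'' case---any column affected by $C_{ij}$ has the relevant box shaded, so the $+\,e_i$ always lands inside $word_c$---but this does not affect the validity of your proof.
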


In upcoming work with Defant, Mularczyk, Nguyen, and Tung, we show that in fact $T'$ is obtained from $T$ by a series of chute moves \emph{if and only if} $\Lambda(T)(i,j) \leq \Lambda(T')(i,j)$ for all $(i,j) \in w$. This provides a global description of the partial order on a chute move poset that, when combined with the local description coming from chute moves, allows us to prove that chute move posets are lattices.

Let $w \in S_n$ and $L$ be a filling of the shaded boxes in the inversions diagram of $w$ with numbers in $\mathbb{Z}_{\geq 0}$. We would like to understand better what conditions to impose on the filling to make $L$ a Lehmer tableau. In otherwords, let $T$ be the tableaux given by $word_c(T) = code^{-1}(word_c(L))$ for all columns $c$. We want to know what conditions on $L$ will guarantee that $T$ is an inversions tableau. For the rest of this section, we will assume that $L$ and $T$ are as above.

Since $code^{-1}(word_c(L))$ is always a partial permutation, we will always have that $T$ is column strict.

\begin{lemma}
    Let $i_1 < i_2 < \dots < i_m$ be all the rows in $L$ for which the box $(i_j,c)$ is shaded in column $c > i_m$. Then $T(i_j,c) \leq i_j$ for all $1 \leq j \leq m$ if and only if $L(i_j,c) \leq i_j - j$ for all $j$.
\end{lemma}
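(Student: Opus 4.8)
The plan is to reduce the statement to a fact about a single column and the Lehmer code of a single partial permutation. Fix a column $c$ and write $u=u_1u_2\cdots u_m:=word_c(T)$, so that $u_j=T(i_j,c)$, and let $(c_1,\dots,c_m):=code(u)$, so that $c_j=L(i_j,c)$ by the defining relation $word_c(T)=code^{-1}(word_c(L))$. For each $j$ put $a_j:=\#\{l<j:u_l<u_j\}$ and $b_j:=\#\{l<j:u_l>u_j\}$; since $u$ is a partial permutation its entries are distinct, so $a_j+b_j=j-1$. Counting the integers in $\{1,\dots,u_j-1\}$ and removing those already occurring among $u_1,\dots,u_{j-1}$ (there are exactly $a_j$ of these) gives the basic identity $c_j=(u_j-1)-a_j$, equivalently $u_j=c_j+a_j+1$. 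Substituting, the inequality $c_j\le i_j-j$ becomes $u_j+b_j\le i_j$, whereas $T(i_j,c)\le i_j$ is simply $u_j\le i_j$. So the lemma is equivalent to the assertion that $u_j\le i_j$ for all $j$ holds if and only if $u_j+b_j\le i_j$ for all $j$.

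The direction ``codes bound $\Rightarrow$ entries bound'' is immediate and can be checked pointwise: since $b_j\ge 0$, $u_j+b_j\le i_j$ forces $u_j\le i_j$; equivalently, from $c_j\le i_j-j$ and $a_j\le j-1$ we get $u_j=c_j+a_j+1\le i_j$.

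For the other direction I would argue as follows. Fix $j$ and consider the index set $J:=\{\,l\le j:u_l\ge u_j\,\}$. It consists of $j$ itself together with the $b_j$ indices $l<j$ with $u_l>u_j$, so $|J|=b_j+1$, and $\{u_l:l\in J\}$ is a set of $b_j+1$ distinct integers each at least $u_j$; hence $\max_{l\in J}u_l\ge u_j+b_j$. Choose $l^\ast\in J$ attaining this maximum. Since $l^\ast\le j$ and $i_1<i_2<\cdots<i_m$, we have $i_{l^\ast}\le i_j$, and the hypothesis $u_l\le i_l$ applied at $l=l^\ast$ gives $u_{l^\ast}\le i_{l^\ast}$. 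Chaining these, $u_j+b_j\le u_{l^\ast}\le i_{l^\ast}\le i_j$, which is exactly the desired inequality.

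The main obstacle is precisely this last direction: it cannot be verified one index at a time, because $u_j\le i_j$ alone says nothing about $u_j+b_j$. One must locate an earlier index $l^\ast$ of larger value and feed the bound there back into the comparison, using the monotonicity of the sequence $i_1<\cdots<i_m$ to absorb the shift; the pigeonhole estimate $\max_{l\in J}u_l\ge u_j+b_j$ is the crux. Everything else is routine bookkeeping with the Lehmer-code identity $u_j=c_j+a_j+1$.
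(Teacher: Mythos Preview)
Your proof is correct and follows essentially the same approach as the paper: both arguments hinge on the identity $u_j=c_j+a_j+1$ (equivalently $c_j\le i_j-j\iff u_j+b_j\le i_j$) and the pigeonhole observation that the $b_j+1$ distinct values $\{u_l:l\le j,\ u_l\ge u_j\}$ force some $u_{l^\ast}\ge u_j+b_j$. The paper phrases the harder direction as a contrapositive (take the least $j$ where the $L$-bound fails and exhibit a box violating the $T$-bound), while you argue it directly; your formulation is slightly cleaner, but the mathematical content is the same.
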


\begin{proof}
    First, suppose $L(i_j,c) \leq i_j - j$. Then 
    $$T(i_j,c) \leq L(i_j,c) + 1 + \#\{\text{shaded boxes below } i_j \text{ in column } c\} = L(i_j,c) + j.$$

 Next suppose for all $j'<j$ we have $L(i_j',c) \leq i_j' - j'$, but $L(i_j,c) \geq i_j - j$. Assume among the set of shaded boxes $S = (i_1,c), \dots, (i_j,c)$ that $k$ of them have entries less than $T(i_j,c)$ for some $0 \leq k < j$. Then $T(i_j,c) = k + L(i,j) + 1$. There are $j-k$ boxes in $S$ with entries at least $T(i,j)$. Thus the largest entry contained by one of these boxes is at least $k + L(i,j) + (j-k) + 1 = L(i_j,c) + j > i_j$. This is larger than the row index of the box it is in.  
\end{proof}

\begin{question}
    What conditions on $L$ will guarantee that $T$ is balanced?
\end{question}

\section{Acknowledgements} 
Thank you to Colin Defant, Hanna Mularczyk, Foster Tom and Katherine Tung for help with editing. Thanks to Maya Sankar for writing the TikZ commands to make nicer pipe dreams. Thank you to Dora Woodruff and Foster Tom for ideas about the lexicographically minimal inversions tableaux, and thanks to Yuan Yao for helpful comments inspiring mediocre Bruhat order.

\printbibliography 

@article{fomin1996yang,
  title={The {Y}ang-{B}axter equation, symmetric functions, and {S}chubert polynomials},
  author={Fomin, Sergey and Kirillov, Anatol N},
  journal={Discrete Mathematics},
  volume={153},
  number={1-3},
  pages={123--143},
  year={1996},
  publisher={Elsevier}
}

@article{bergeron1993rc,
  title={{RC}-graphs and {S}chubert polynomials},
  author={Bergeron, Nantel and Billey, Sara},
  journal={Experimental Mathematics},
  volume={2},
  number={4},
  pages={257--269},
  year={1993},
  publisher={Taylor \& Francis}
}

@article{bergeron,
  title={A combinatorial construction of the Schubert polynomials},
  author={Bergeron, Nantel},
  journal={J. Comb.
Theory},
  volume={A},
  number={60},
  pages={168--182},
  year={1992},
  publisher={Taylor \& Francis}
}

@article{LascouxS,
  title={Polyn{\^o}mes de {S}chubert},
  author={Lascoux, Alain and Sch{\"u}tzenberger, Marcel-Paul},
  journal={CR Acad. Sci. Paris S{\'e}r. I Math},
  volume={294},
  number={13},
  pages={447--450},
  year={1982}
}

@article{Wachs,
  title={Flagged Schur functions, Schubert polynomials, and symmetrizing operators},
  author={Wachs, Michelle L.},
  journal={J. Combin. Theory Ser. A},
  volume={40},
  number={2},
  pages={276--289},
  year={1985}
}

@article{demazure1974,
  title={D{\'e}singularisation des vari{\'e}t{\'e}s de {S}chubert g{\'e}n{\'e}ralis{\'e}es},
  author={Demazure, Michel},
  journal={Annales scientifiques de l'{\'E}cole Normale Sup{\'e}rieure},
  volume={7},
  number={1},
  pages={53--88},
  year={1974}
}

@article{bernstein1973schubert,
  title={Schubert cells and cohomology of the spaces ${G}/{P}$},
  author={Bernstein, I. N and Gel'fand, Israel M and Gel'fand, Sergei I},
  journal={Russian Mathematical Surveys},
  volume={28},
  number={3},
  pages={1},
  year={1973},
  publisher={IOP Publishing}
}

@article{billey1993some,
  title={Some combinatorial properties of {S}chubert polynomials},
  author={Billey, Sara C and Jockusch, William and Stanley, Richard P},
  journal={Journal of Algebraic Combinatorics},
  volume={2},
  number={4},
  pages={345--374},
  year={1993},
  publisher={Springer}
}

@article{littlewood1934group,
  title={Group characters and algebra},
  author={Littlewood, Dudley Ernest and Richardson, Archibald Read},
  journal={Philosophical Transactions of the Royal Society of London. Series A, Containing Papers of a Mathematical or Physical Character},
  volume={233},
  number={721-730},
  pages={99--141},
  year={1934},
  publisher={The Royal Society London}
}

@article{stembridge2002concise,
  title={A concise proof of the Littlewood-Richardson rule},
  author={Stembridge, John R},
  journal={the electronic journal of combinatorics},
  pages={N5--N5},
  year={2002}
}

@article{FOMIN1994196,
title = {Schubert Polynomials and the Nilcoxeter Algebra},
author = {S. Fomin and R.P. Stanley},
journal = {Advances in Mathematics},
volume = {103},
number = {2},
pages = {196-207},
year = {1994}
}

@article{EG,
title = {Balanced tableaux},
author = {Paul Edelman and Curtis Greene},
journal = {Advances in Mathematics},
volume = {63},
number = {1},
pages = {42-99},
year = {1987}
}

@article{lenartsottile,
title={Skew Schubert polynomials}, 
author={Cristian Lenart and Frank Sottile},
journal={Proc. Amer. Math. Soc.},
volume = {131},
pages = {3319-3328},
year = {2003}
}

@article{lam2021back,
  title={Back stable {S}chubert calculus},
  author={Lam, Thomas and Lee, Seung Jin and Shimozono, Mark},
  journal={Compositio Mathematica},
  volume={157},
  number={5},
  pages={883--962},
  year={2021},
  publisher={London Mathematical Society}
}

@article{stanleysymmetric,
title={On the number of reduced decompositions of elements of Coxeter groups}, 
author={Stanley, Richard P.},
journal={Eur. J. Comb},
volume = {5},
number= {4},
pages = {359-372},
year = {1984}
}

@article{balancedlabellings,
title = {Balanced Labellings and Schubert Polynomials},
journal = {European Journal of Combinatorics},
volume = {18},
number = {4},
pages = {373-389},
year = {1997},
issn = {0195-6698},
doi = {https://doi.org/10.1006/eujc.1996.0109},
url = {https://www.sciencedirect.com/science/article/pii/S0195669896901092},
author = {Fomin, Sergey and Greene, Curtis and Reiner, Victor and Shimozono, Mark}
}

@article{RUBEY,
title = {Maximal 0–1-fillings of moon polyominoes with restricted chain lengths and rc-graphs},
journal = {Advances in Applied Mathematics},
volume = {48},
number = {2},
pages = {290-305},
year = {2012},
issn = {0196-8858},
doi = {https://doi.org/10.1016/j.aam.2011.05.005},
url = {https://www.sciencedirect.com/science/article/pii/S019688581100087X},
author = {Martin Rubey}
}

@book {EC2,
    AUTHOR = {Stanley, Richard P.},
     TITLE = {Enumerative combinatorics. {V}ol. 2},
    SERIES = {Cambridge Studies in Advanced Mathematics},
    VOLUME = {62},
      NOTE = {With a foreword by Gian-Carlo Rota and appendix 1 by Sergey
              Fomin},
 PUBLISHER = {Cambridge University Press, Cambridge},
      YEAR = {1999},
     PAGES = {xii+581},
      ISBN = {0-521-56069-1; 0-521-78987-7},
   MRCLASS = {05A15 (05-02 05E05 05E10 68R05)},
  MRNUMBER = {1676282},
MRREVIEWER = {Ira\ Gessel},
       DOI = {10.1017/CBO9780511609589},
       URL = {https://doi.org/10.1017/CBO9780511609589},
}

@PhdThesis{Kelly.thesis,
  author = 	 {Elizabeth Kelly},
  title = 	 {Schubert Objects},
  school = 	 {University of Washington},
  year = 	 2007,
  URL = {https://digital.lib.washington.edu/researchworks/items/f33a64b6-ec7f-4404-8827-8baa16e6707e}
}

@misc{middleorder,
      title={Between weak and Bruhat: the middle order on permutations}, 
      author={Mathilde Bouvel and Luca Ferrari and Bridget Eileen Tenner},
      year={2024},
      eprint={2405.08943},
      archivePrefix={arXiv},
      primaryClass={math.CO},
      url={https://arxiv.org/abs/2405.08943}, 
}

@misc{proofofrubey,
      title={Chute Move Posets are Lattices}, 
      author={Ilani Axelrod-Freed and Colin Defant and Hanna Mularczyk and Son Nguyen and Katherine Tung},
      year={2025},
      eprint={2507.13214},
      archivePrefix={arXiv},
      primaryClass={math.CO},
      url={https://arxiv.org/abs/2507.13214}, 
}

\end{document}